\numberwithin{equation}{section}
\newif\ifdraft\drafttrue
\newcommand{\m}{{m}}
\newcommand\mr{M_{m,n}}
\newcommand\hd{Hausdorff dimension}
\newcommand\da{Diophantine approximation}
\newcommand\ssm{\smallsetminus}
\newcommand\eq[2]{{\ifdraft{\ \tt [#1]}\else\ignorespaces\fi}\begin{equation}\label{eq:#1}{#2}\end{equation}}
\newcommand {\equ}[1]     {\eqref{eq:#1}}
\newcommand{\R}{{\mathbb {R}}}
\newcommand{\Z}{{\mathbb {Z}}}
\newcommand{\N}{{\mathbb {N}}}
\newcommand{\Leb}{{\operatorname{Leb}}}
\newcommand{\Ad}{{\operatorname{Ad}}}
\newcommand{\ad}{{\operatorname{ad}}}
\newcommand{\Tr}{\operatorname{Tr}}
\newcommand{\SL}{\operatorname{SL}}
\newcommand{\PSL}{\operatorname{PSL}}
\newcommand{\ggm}{G/\Gamma}
\newcommand{\diam}{\operatorname{diam}}
\newcommand{\dist}{\operatorname{dist}}
\newcommand{\diag}{\operatorname{diag}}
\newcommand{\supp}{\operatorname{supp}}
\newcommand{\Lie}{\operatorname{Lie}}
\newcommand {\ignore}[1]  {}
\newcommand\hs{homogeneous space}
\newcommand\cic{C^\infty_{comp}}
\newcommand{\df}{{\, \stackrel{\mathrm{def}}{=}\, }}
\newcommand{\vv}{{\bf{v}}}
\newcommand{\vs}{{\bf{j}}}
\newcommand{\vr}{{\bf i}}
\newcommand{\p}{{\bf p}}
\newcommand{\vq}{{\bf q}}
\newcommand {\comm}[1]   {\textcolor{red}{#1}}
\newcommand {\gr}[1]     {\textcolor{green}{#1}}
\DeclareMathOperator{\codim}{codim}
\DeclareMathOperator{\spn}{span}
\newcommand{\vre}{\varepsilon}
\newcommand\nz{\smallsetminus \{0\}}
\newtheorem{thm}{Theorem}[section]
\newtheorem{lem}[thm]{Lemma}
\newtheorem{prop}[thm]{Proposition}
\newtheorem{cor}[thm]{Corollary}
\newtheorem{defn}[thm]{Definition}
\newtheorem{claim}[thm]{Claim}
\begin{document}
\title[Dimension drop   for diagonalizable flows]
{{Dimension drop for diagonalizable \\ flows on homogeneous spaces}}
\author{Dmitry Kleinbock}
\address{Department of Mathematics, Brandeis University, Waltham MA}
\email{kleinboc@brandeis.edu}

\author{Shahriar Mirzadeh}
\address{{ Department of Mathematics, University of Cincinnati, Cincinnati OH }}
 \email{{mirzadsr@ucmail.uc.edu}}

\begin{abstract} 
Let $X = \ggm$, where $G$ is a Lie group and $\Gamma$ is a lattice in $G$,  let $O$ be an open subset of $X$, {and let $F = \{g_t: t\ge 0\}$ be a one-parameter subsemigroup of $G$.  
Consider the set of points in $X$ whose $F$-orbit misses $O$; it has measure zero if the flow is ergodic. It has been conjectured that this set has Hausdorff dimension strictly smaller than the dimension of $X$. This conjecture is proved when $X$ is compact or when $G$ is a simple Lie group of real rank $1$, or, most recently, {for certain flows on 
the space of lattices.}
In this paper we  prove this conjecture 
for arbitrary $\Ad$-diagonalizable flows on irreducible quotients of semisimple Lie groups. 
The proof uses exponential mixing of the flow together with the  method of integral inequalities for height functions on $\ggm$. We also {derive} an application to jointly Dirichlet-Improvable systems of linear forms.}
\end{abstract}

\thanks{The first-named author was supported by NSF grant  DMS-1900560.}

\subjclass[2010]{Primary: 37A17, 37A25; Secondary: 11J13.}
\date{July 2022}

\maketitle
\section{Introduction}
\subsection{The set-up} {{Let  $G$ be a {connected} 
{Lie group, 
and let $\Gamma $   
be a lattice} in $G$.} 
Denote by $X$ the homogeneous space  $G/\Gamma$ and by $\mu$   the
$G$-invariant probability measure on $X$. 
%
%
For {an unbounded subset $F$ of $G$
and a non-empty open subset $O$ of $X$  define the sets ${E(F,O)}$  and $\widetilde E(F,O)$ as follows:
\eq{set} {\begin{aligned}E(F,O) &:=  \{ x \in X: gx \notin
 O \ \forall\, g\in F\}\\\subset\  \widetilde E(F,O)&:= \{ x \in X: \exists\text{ compact }Q\subset G\text{ such that }gx \notin
 O \ \forall\, g\in F\ssm Q\}\\ &= \bigcup_{\text{compact }Q\subset G}E(F\ssm Q,O)
 \end{aligned}
 }}
of points in $X$ whose 
{$F$}-trajectory always (resp., eventually) stays away from $O$.}
If {$F$ is a subgroup or a subsemigroup of $G$ acting ergodically} 
on $(X,{\mu})$,
then the set {$\{gx : g\in F\}$} is dense for {$\mu$-}almost all $x \in X$, in particular $\mu\big(\widetilde E({F},O)\big) = 0
$.


{The present paper studies the following natural question, asked several years ago by Mirzakhani: 
for a 
subgroup or sub-semigroup $F\subset G$, {if  the set $E(F,O)$ has measure zero, does it necessarily have less than} full \hd?} it is reasonable to conjecture that the answer is always affirmative; in other words, that the following `Dimension Drop Conjecture' (DDC) holds: {\it if 
$F\subset G$ is a subsemigroup and $O$ is an open subset of $X$, then either $E(F,O)$ has positive measure, or its dimension is less than the dimension of $X$.}
When $X$ is compact it follows from the variational principle for measure-theoretic entropy, as outlined in \cite[\S 7]{KW2}; an effective argument using exponential mixing was developed in \cite{KMi1}. See also \cite[Theorem 1.1 and Corollary 1.3]{MRC} which explores the  dimension drop phenomenon in a different setting.

In the non-compact case a weaker statement that  for any non-quasiunipotent flow on a finite volume homogeneous space, the set   of points
that lie on  {\it divergent} trajectories  has positive codimension is a conjecture made by Cheung in \cite{C}.
A standard approach to this circle of problems is to use  the phenomenon of {\it non-escape of mass} on \hs s, going back to the work of Eskin--Margulis--Mozes and Eskin--Margulis, see \cite{EMM, EM} and also \cite{KKLM}. This is precisely how Cheung's conjecture has been recently verified by Guan and Shi \cite{GS}; see also \cite{AGMS, RHW} for some related work.
However combining the  non-escape of mass argument with an additional construction taking care of the compact part of the space is more involved. Previously this was done in the case when {$G$ is a simple Lie group of} real rank $1$ \cite{EKP}, and then, in the most recent work of the authors \cite{KMi2}, 
{when} 
\eq{gt}{
\begin{aligned}X = \SL_{m+n}(\R)/\SL_{m+n}(\Z) 
\text{ and } \qquad\quad\\ F = \big\{
\diag (e^{nt}, \dots, e^{nt},e^{-mt}, \dots, e^{-mt}): t\ge 0\big\}.\end{aligned}  }

\smallskip
In this paper we generalize the approach of \cite{KMi2} 
by exhibiting two abstract assumptions sufficient for the validity of DDC. One takes care of the compact part of the space, while the other deals with the non-escape of mass.

Let $G$ be a 
Lie group, $\Gamma$ a lattice in $G$ and  $X = \ggm$. 
We shall start by introducing some notation. 
Fix a {right-invariant} Riemannian {structure  on $G$, and denote by
`${\dist}$' the corresponding Riemannian metric},
  {using} the same notation for the induced metric on homogeneous spaces of $G$. 
  {In what follows, if $P$ is a subgroup of $G$, we will denote by $B^{P}(r)$  the open ball {in $P$} of radius $r$ centered at the identity element with respect to the  
metric  on $P$ corresponding to the Riemannian structure induced from $G$.}
{We will let $\nu$ stand for the Haar measure on $P$ normalized so that $\nu\big(B^{P}(r)\big) = 1$.}  
{For simplicity, we use $B(r)$ instead of  $B^G(r)$ to denote a ball of radius $r$ in $G$ centered at the identity element. Also,} $B(x,\rho)$ will stand for the open ball  in $X$ centered at $x\in X$ of radius $\rho$.

For $x\in X$ denote by $\pi_x$ the map  $G\to X$ given by $\pi_x(g) := gx$, and by  $r_0(x)$ the \textsl{injectivity radius} of $x$: $$
{r_0(x) :=}\,\sup\{r > 0: 
\pi_x\text{ is injective on }B(r)\}.$$
If $K$ is a subset of $X$, let us denote by $r_0(K)$ the \textsl{injectivity radius} of $K$: $$
r_0(K) := \inf_{x\in K}r_0(x) = \sup\{r > 0: 
\pi_x\text{ is injective on }B(r)\  \ \forall\,x\in K\};$$
it is known that $r_0(K) > 0$ if and only if $K$ is bounded.

The notation ${A\gg B}$
where  $A$ and $B$ are quantities depending on certain parameters, will mean ${A \ge  CB}$, 
where {$C$ is a constant} 
independent   on those parameters.

Let now $F = \{g_t: t \ge 0\}$ be an $\Ad$-diagonalizable one-parameter subsemigroup of $G$.
 A key role in our method will be played by the  {\sl unstable horospherical subgroup} with respect to $F$, defined as
\eq{uhs}{H := \{ g \in G:{\dist}({g_t}g{g_{ - t}},e) \to 0\,\,\,as\,\,\,t \to  - \infty \}.}
Equivalently, $H$ is the Lie group whose Lie algebra is a direct sum of eigenspaces of $\Ad\, g_1$ corresponding to eigenvalues with absolute value $> 1$. More generally, we will work with {connected} subgroups $P$ of $H$ normalized by $F$  and will give conditions sufficient for `dimension drop along $P$-orbits'; that is, ensuring a nontrivial upper estimate for 
$$ \dim \big(\{h\in P: hx\in \widetilde E({F},O)\} \big),$$ where $x\in X$ is arbitrary, and $O$ is a non-empty open subset of $X$.

Throughout the proof we will pay close attention to translates of $P$-orbits in $X$ by $g_t$.  it will be convenient to use the following notation: if 
 $f$ a function on $P$
 and $t\ge 0$, we will define the integral operator 
$I_{f,t}$ acting on functions $\psi$ on $X$ via
\eq{ift}{(I_{f,t}\psi)(x):= \int_P
f({{h}})\psi(g_t{{h}}x)\,d{{\nu}}({{h}})\,.}
In other words, $(I_{f,t}\psi)(x)$ is the integral of $\psi$ with respect to the $g_t$-translate of the $\pi_x$-pushforward of the signed measure $f\,d{{\nu}}$. When $f = 1_B$ for a subset $B$ of $P$, we will write $$I_{B,t} = \int_B
 \psi(g_t{{h}}x)\,d{{\nu}}({{h}})$$ in place of $I_{1_B,t}$.

\subsection{Exponential mixing and effective equidistribution} 
The first ingredient of our proof is the 
effective equidistribution of $g_t$-translates of $P$-orbits on $X$. 
To {introduce} this property we will work with Sobolev spaces of functions on $X$. 
{Let us define}
$$C^\infty_2(X) = \big\{h \in C^\infty(X): \|h \|_{\ell{,2}} < \infty\text{ for any }\ell \in \Z_+\big\},$$
where $\|\cdot \|_{\ell{,2}}$ is the ``{\sl $L^{{2}}$, order $\ell$" Sobolev norm} {(see \S\ref{tess} for more detail). Now} 
let us introduce the following 

\begin{defn}\label{subgroup} \cite{KMi1}
Say that a subgroup   $P$ of $G$ has  \textsl{Effective Equidistribution
Property}  {\rm (EEP)} with respect to the flow $(X,F)$
if 
 there exists constants {$a,b,
 \lambda > 0$} and {$\ell \in {\N}$} such that 
for any 
{$x \in X$ and $t > 0$ with \eq{conditionont}{t\  {\ge 
a+b \log\frac{1}{r_0({x}) } 
 ,} }
 any $f\in C^\infty(P)$ with $\supp f \subset B^P(1)$ and 
any $\psi\in
C^\infty_2(X)$}
it holds that
\eq{eep}{\left| {(I_{f,t}\psi)(x) - \int_P f\,d\nu {\mkern 1mu} \int_X \psi\,d\mu  {\mkern 1mu} } \right| {\ll}\ {\max \big(
{\left\| \psi  \right\|_{C^1}, \left\| \psi  \right\|_{\ell{,2}} }
\big)} \cdot {\left\| f \right\|_{{C^\ell }}} \cdot {e^{ - \lambda t}}{\mkern 1mu}.}
\end{defn}
{Note that the constants $a,b$ in \equ{conditionont} and an implicit constant in \equ{eep}} are allowed to depend on $P$ and $F$, but not on $x,t,f$ and $\psi$.

A general principle that mixing implies equidistribution of unstable leaves---that is, orbits of $H$ as in \equ{uhs}---has been widely used in homogeneous dynamics, starting perhaps with the Ph.D.\ thesis of Margulis \cite{M}. 
Its effective versions have been exploited in \cite{KM1, KM4}.
\ignore{More recently, in \cite{KMi1} the authors introduced  the {\sl Effective
Equidistribution Property}  of subgroups of $H$ with respect to  the flow $(X,F^+)$. 
Roughly speaking,  it
 asserts that for functions $\psi$   on $X$ and $f$  on $P$, the quantity $I_{f,\psi}({t},x)$ tends to the product of integrals of $f$ and $\psi$ as $t\to\infty$. Moreover, the error term can be effectively estimated, which can happen only when some restrictions on $x$ are placed.}
 Specifically, 
let us say that a flow $(X,F)$ is {\sl exponentially
mixing}  if there exist {$\gamma  > 0$ and $\ell\in\Z_+$}
such that for any   $\varphi, \psi \in  C^\infty_2(X) $
 and for any $t\ge 0$ one has
$${\left| {({g_t}\varphi ,\psi )- \int_X\varphi\, d\mu \int_X\psi\, d\mu}\right|   \ll  {e^{ - \gamma t}}{\left\| \varphi  \right\|_{\ell{,2}}} {\left\| \psi  \right\|_{\ell{,2}}}.}$$
This property for non-quasiunipotent flows   follows from the {{\sl strong spectral gap} of the regular representation of $G$, see  \cite{KM1}; the latter is known to hold for 
quotients of semisimple Lie groups without compact factors by irreducible lattices, see \cite[p.~285]{KS}.}

{The fact that property (EEP) for expanding horospherical subgroups follows from exponential mixing was established in \cite{KMi1} by a variation of the method developed in \cite{KM1}:}

\begin{thm}\label{thmheep}\cite[Theorem 2.5]{KMi1}  Let $G$ be a Lie group, $\Gamma $   a 
lattice in $G$, and let ${F}$ be a one-parameter 
{sub}semigroup of $G$ 
whose action on $X = G/\Gamma$  is exponentially mixing. Then $H$ as in \equ{uhs} satisfies property {\rm (EEP) {with respect to the flow $(X,F)$}}.\end{thm}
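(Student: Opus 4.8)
The strategy is the classical "thickening" argument: instead of trying to equidistribute a single unstable leaf $B^H(1)x$, I would estimate $(I_{f,t}\psi)(x)$ by comparing the translated leaf with a genuine piece of the $G$-orbit (an honest neighbourhood of $x$ in $X$), to which exponential mixing applies directly. First I would fix a product-structure neighbourhood of the identity: write $G$ locally as $H\cdot S$ where $S$ is a transversal (the stable plus neutral directions), so that a small ball in $G$ is parametrised by $(h,s)\in B^H(r_1)\times B^S(r_1)$. Given $f\in C^\infty(P)$ supported in $B^P(1)\subset B^H(1)$ and $\psi\in C^\infty_2(X)$, I would introduce the "thickened" test function $\tilde\psi$ on $X$ obtained by averaging $\psi$ over a small $S$-ball near $x$, scaled by $f$ in the $H$-direction; concretely, $\tilde\psi$ is built so that $\int_X \tilde\psi\,d\mu \approx \int_P f\,d\nu\,\int_X\psi\,d\mu$ and so that $(g_t\tilde\psi)(x)$ differs from $(I_{f,t}\psi)(x)$ by a controlled error.

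The two error terms to control are: (i) the difference between integrating over the flat leaf $Ph x$ and over the curved thickened region $HS$-chart — this requires that $\pi_x$ be injective on the relevant ball, which is exactly where the hypothesis $t\ge a+b\log\frac1{r_0(x)}$ enters, since the leaf $g_t B^P(1)x$ has been expanded and one needs the injectivity radius at $x$ to be large enough (after accounting for the expansion rate of $\Ad g_t$ on $H$) for the chart to embed; and (ii) the distortion coming from the fact that $g_t$ expands the $H$-directions non-uniformly and contracts $S$, which changes the measure and the smoothness of the thickened function — this is handled by the usual observation that conjugation by $g_t$ multiplies the relevant Jacobians by bounded powers of $e^{\gamma t}$, absorbed into the Sobolev norm estimate at the cost of raising the order $\ell$ and shrinking $\lambda<\gamma$. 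Applying exponential mixing to $(g_t\tilde\psi)(x)$ — here one uses that point evaluation is controlled by the $C^1$-norm, or passes through a further mollification, which is why $\max(\|\psi\|_{C^1},\|\psi\|_{\ell,2})$ appears on the right-hand side of \equ{eep} — yields the main term $\int_X\tilde\psi\,d\mu$ plus $e^{-\gamma t}$ times Sobolev norms, and collecting the pieces gives \equ{eep} with a possibly smaller $\lambda$.

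The main obstacle, and the technical heart of the argument, is the bookkeeping in step (i)–(ii): one must track how the norm $\|f\|_{C^\ell}$ transforms when $f$ is first spread out into the thickened function $\tilde\psi$ and then pulled back by $g_t$, and verify that the $t$-dependence is purely the exponential factor $e^{-\lambda t}$ with no hidden polynomial loss that would spoil the estimate; simultaneously one must confirm that the lower bound on $t$ in \equ{conditionont} is genuinely sufficient for the chart around $x$ to be an embedding after expansion, which forces the constant $b$ to be dictated by the top Lyapunov exponent of $\Ad g_1$ on $H$. Since this theorem is quoted from \cite[Theorem 2.5]{KMi1}, I would carry out exactly the variation of the method of \cite{KM1} indicated in the text, where these estimates are made precise; the novelty here is only that the mixing rate is supplied by the strong spectral gap rather than assumed, which is transparent once \equ{eep} is reduced to the displayed exponential-mixing inequality.
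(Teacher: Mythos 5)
Your proposal is the standard thickening argument of Margulis/Kleinbock--Margulis, which is exactly how this result is established in the cited source \cite{KMi1} (the present paper gives no proof, only the citation and the remark that it is a variation of the method of \cite{KM1}); in particular you correctly identify the role of the injectivity-radius condition \equ{conditionont}, the contraction of the transversal directions under $g_t$, and the reason the $C^1$-norm of $\psi$ enters the bound. The approach is essentially the same as the paper's, so nothing further is needed.
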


\ignore{Another example is given by  $X =\SL_{m+n}(\R)/\SL_{m+n}(\Z)$ 
and
\eq{weights}{g_t = {g_t^{\vr,\vs} :=} \diag({e^{{i_1}t}},\dots,{e^{{i_m}t}},{e^{
- {j_1}t}},\dots,{e^{ - {j_n}t}}),}
where
$$\vr = ({i_k}: k = 1,\dots, m)\text{ and }\vs = ({j_\ell}:  \ell = 1,\dots,  n)$$
{and 
$${{i_k},{j_\ell} > 0\,\,\,\,and\,\,\,\,\sum\limits_{k = 1}^m {{i_k} = 1 = \sum\limits_{\ell = 1}^n {{j_\ell}} }.}$$}
This set-up is important for simultaneous \da\ with weights \comm{(will we have a corollary about it in this paper? if not, maybe we can remove this example)}.
it is shown in \cite[Theorem 7.1]{KMi1}
that the subgroup
\eq{weightsp}
{P = \left\{ \left( {\begin{array}{*{20}{c}}
{{I_m}}&A\\
0&{{I_n}}
\end{array}} \right) : A\in  {M_{m,n}}(\mathbb{R})\right\}
}
satisfies (EEP) relative to the
$g_t^{\vr,\vs}$-action.}

\subsection{Height functions and integral inequalities} The second ingredient of our proof 
is studying $g_t$-translates of $P$-orbits in $X$ at infinity. For that it is helpful to have a family of positive functions on $X$ which grow at infinity and behave nicely with respect to integral operators of type \equ{ift}
This is done via the method of integral inequalities which goes back to \cite{EMM} and \cite{EM}, and was recently applied in  \cite{GS} 
 for upper estimates for the \hd\ of the set of points of $X$ with divergent $g_t$-trajectories. To state their result, it will be convenient to introduce certain terminology, which will be used throughout the paper.   Namely, let us say that a non-negatve continuous function $u$ on $X$  is a
 \textsl{height function} if it is proper, that is $u(x) \to\infty$ if and only if $x \to\infty$ in $X$, and  \textsl{regular}, that is there exists a non-empty neighborhood $B$ of identity in $G$ and $C > 0$ such that \eq{reg}{u(hx)\le Cu(x)\text{ for every   $h\in B$ and all }x\in X;} equivalently, if for any bounded $B\subset G$ there exists $C > 0$ such that  \equ{reg} holds. 
 Also let us say that $u$  satisfies the \textsl{$(c,d)$-Margulis inequality} with respect to an operator $I: C(X)\to C(X)$ if for all $x\in X$ one has
$$
(Iu)(x) \le cu(x) + d.
$$
 {See \cite{EMo, SS}, where functions $u$ satisfying the  $(c,d)$-Margulis inequality for some $c<1$ and $d\in\R$ are called {\sl Margulis functions}.} 
 With this terminology, 
 let us introduce the following definition.

 \begin{defn}\label{enp2}
Say that a subgroup   $P$ of $G$ has  \textsl{Effective Non-Divergence
Property } {\rm (ENDP)} with respect to the flow $(X,F)$ if 
{there exists $0<c_0<1$ and $t_0>0$ such that 
for any 
$t \ge t_0$
one can find $d_t>0$ and  a height function $u_t$
such that $u_t$ satisfies the  $(c_0,d_t)$-Margulis inequality {with respect to $I_{B^P(1),t}$}.}
\end{defn}

 
 \ignore
{\begin{thm}\label{thmguanshi}  \cite[Lemma 4.3]{GS} 
Let $$X = G/\Gamma = \prod_{
i=1}^n G_i/\Gamma_i,$$ where each $G_i/\Gamma_i$
is a non-uniform irreducible
quotient of a semisimple Lie group without compact factors, and let $F = \{g_t\}$ be be a one-parameter {$\Ad$-}diagonalizable
{sub}group of $G$ such that the projection of $F$ to each $G_i$ is unbounded. Also let $H$ be as in \equ{uhs} and let $B = B^H(1)$. Then there exist $\alpha, t_0 >  0$ and, for any 
 $t \ge t_0$, {a height function $u_t:X\to(0,\infty)$ and $d_t\in \R$ such that
for any $t \ge t_0$ the function $u_t$  satisfies an  $(e^{-\alpha t},d_t)$-Margulis inequality with respect to $I_{B,t}$}.
\end{thm}

Also \comm{here we should cite \cite{KMi2}, explaining what we did then, earlier papers too. Also Federico and Zhiren, explaining what kind of function they constructed in our terminology}.

\smallskip
 The goal of the current work is to upgrade the result of  \cite{GS} regarding   divergent $g_t$-trajectories to a stronger  dimension drop result. 
Also we are interested in a possibility of replacing $H$ by its proper subgroups $P$.

\smallskip
\comm{Now let me introduce two tentative definitions.
\begin{defn}\label{enp1}
Say that a subgroup   $P$ of $G$ has  \textsl{Effective Non-Divergence
Property~1  (ENDP1)} with respect to the flow $(X,F^+)$
if the following properties are satisfied:
 \begin{enumerate}
     \item 
 there exist   constants $t_0, d > 0$, $0 < c < 1$ and,
 a height function $u:X\to(0,\infty)$ 
which  satisfies the  $(c,d)$-Margulis inequality with respect to $I_{B^P(1),t}$ for any $t \ge t_0$.
\item
There exists $\eta >0$ such that for any $M>0$ we have
$$r_0(\{x: u(x) \le M \})  \ge M^{- \eta}             $$
 \end{enumerate}
\end{defn}
 \begin{defn}\label{enp2}
Say that a subgroup   $P$ of $G$ has  \textsl{Effective Non-Divergence
Property~2  (ENDP2)} with respect to the flow $(X,F^+)$ if the following properties are satisfied:
\begin{enumerate}
    \item 
There exists a height function $u:X\to(0,\infty)$ 
with the following property: for any $0<c<1$
one can find   $t_0, d > 0$ 
such that $u$ satisfies the  $(c,d)$-Margulis inequality {with respect to $I_{B^P(1/2),t}$} for any $t \ge t_0$.
\item
There exists $\eta >0$ such that for any $M>0$ we have
\eq{inj}{r_0(\{x: u(x) \le M \})  \ge M^{- \eta}}             
\end{enumerate}
\end{defn}}}
 

{In the course of proving the main result of \cite{KMi2} the above property was shown   in the case \equ{gt}, see \cite[Proposition 3.4.]{KMi2}. The proof followed a construction fron \cite{KKLM} and used   functions on the space of lattices coming from the work of  Eskin, Margulis and Mozes  \cite{EMM}. To get more examples,} we will quote the following result from \cite{GS}:
\begin{thm}\label{thmguanshi}  \cite[Lemma 4.3]{GS} 
Let $X 
= \prod_{
i=1}^n G_i/\Gamma_i$, where each $G_i/\Gamma_i$
is a non-uniform irreducible
quotient of a semisimple Lie group without compact factors, and let $F$
be be a one-parameter {$\Ad$-}diagonalizable
{sub}semigroup of $G= \prod_{
i=1}^n G_i$ such that the projection of $F$ to each $G_i$ is unbounded. Also let $H$ be as in \equ{uhs} and let $B = B^H(1)$. Then there exist $\alpha, t_0 >  0$ and, for any 
 $t \ge t_0$, {a height function $u_t$ on $X$ and $d_t\in \R$ such that
for any $t \ge t_0$ the function $u_t$  satisfies an  $(e^{-\alpha t},d_t)$-Margulis inequality with respect to $I_{B,t}$}. {Consequently, $H$ 
has property {\rm (ENDP)} with respect to $(X, F)$.}
\end{thm}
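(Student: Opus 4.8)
The first assertion is \cite[Lemma 4.3]{GS}; the substantive point is the construction of the height functions $u_t$, so I sketch how I would carry it out and then deduce (ENDP). \textbf{Reduction to a single factor.} Since $F=\prod_i F_i$ with $F_i$ the projection of $F$ to $G_i$, the unstable horospherical subgroup $H$ splits as $H=\prod_i H_i$, with $H_i$ the unstable horospherical subgroup of $F_i$ in $G_i$; the measure $\nu$ is proportional to $\prod_i\nu_i$ and $B^H(1)$ is comparable to $\prod_i B^{H_i}(1)$. The hypothesis that each $F_i$ is unbounded forces $\Ad g_1$ to have an eigenvalue of modulus $>1$ on $\Lie G_i$ (otherwise $g_t$ would stay bounded, $G_i$ being semisimple with finite center), so each $H_i$ is nontrivial and $g_t$ genuinely expands along it. Consequently, if for every $i$ and every $t\ge t_0$ one can produce a height function $u^{(i)}_t$ on $G_i/\Gamma_i$ and $d^{(i)}_t>0$ satisfying an $(e^{-\beta_i t},d^{(i)}_t)$-Margulis inequality with respect to $I_{B^{H_i}(1),t}$, then, pulling these back to $X$ via $\proj_i$ and summing, $u_t:=\sum_i u^{(i)}_t\circ\proj_i$ is again a height function (properness and regularity being inherited factorwise), and Fubini together with $e^{-\beta_i t}\le e^{-\alpha t}$ yields an $(e^{-\alpha t},d_t)$-Margulis inequality for $u_t$ with respect to $I_{B^H(1),t}$, with $\alpha:=\min_i\beta_i$ and $d_t:=\sum_i d^{(i)}_t$ (decreasing $\alpha$ slightly if needed to absorb the normalization constant between $\nu$ and $\prod_i\nu_i$). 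So it suffices to treat a single non-uniform irreducible quotient $G/\Gamma$.

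\textbf{The height function on $G/\Gamma$.} By reduction theory (Borel--Harish-Chandra in higher rank, the thick--thin decomposition in rank one) the ends of $G/\Gamma$ correspond to the finitely many $\Gamma$-conjugacy classes of minimal rational parabolic subgroups $P_1,\dots,P_r$. Fix a $\Q$-rational embedding $G\hookrightarrow\SL(V)$ and, for each $j$, a rational highest-weight line $\R v_j$ for $P_j$, and for a small $s>0$ put
\[
\phi_j(g\Gamma):=\sup_{\gamma\in\Gamma}\|g\gamma v_j\|^{-s},\qquad u:=\sum_{j=1}^{r}\phi_j .
\]
Since $\Gamma v_j$ is discrete in $V$, each supremum is attained, only boundedly many $\gamma$ contribute below a given threshold, and $\phi_j$ is continuous; also $\phi_j(hg\Gamma)\ll\phi_j(g\Gamma)$ for $h$ in a fixed neighborhood of the identity, so $u$ is regular. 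By reduction theory $g\Gamma\to\infty$ precisely when $\min_j\inf_{\gamma}\|g\gamma v_j\|\to0$, so $u$ is proper; hence $u$ is a height function (which here may be taken independent of $t$).

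\textbf{The averaged Margulis inequality.} This is the heart of the matter, and is the quantitative form of non-escape of mass in the spirit of \cite{EMM,EM,KKLM}. Fix $x=g\Gamma$; estimating $\phi_j(g_thg\Gamma)$ by the sum over the boundedly many vectors $w=g\gamma v_j$ that can become short, the bound for $(I_{B^H(1),t}u)(x)$ reduces to the one-vector estimate
\[
\int_{B^H(1)}\|g_t h\, w\|^{-s}\,d\nu(h)\ \ll\ e^{-\alpha t}\,\|w\|^{-s}\qquad(t\ \text{large}),
\]
which I would prove by writing $g_th=(g_thg_{-t})\,g_t$, so that $g_thg_{-t}$ ranges over an exponentially large ball in $H$, and combining the polynomial character of $h'\mapsto\|h'v\|$ on the unipotent group $H$ with the expansion of $\Ad g_t$ on $\Lie H$ and the top-weight structure of the $H$-orbit of $v$ relative to $F$, having chosen $s$ small relative to the root data. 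Summing over $j$ and over the relevant $\gamma$, one must rule out configurations in which several of the vectors $g_thg\gamma v_j$ are simultaneously short; this is handled by the submodular inequality used in \cite{EMM} and its higher-rank counterpart (a point deep in one cusp is uniformly far from every other), which lets those contributions be absorbed into an additive constant $d_t$. The outcome is an $(e^{-\alpha t},d_t)$-Margulis inequality for $u$ with respect to $I_{B^H(1),t}$, valid for all $t\ge t_0$. I expect the interaction between the flow direction and the relative position of the cusps in this last step, together with the precise admissible range of $s$, to be the only genuinely delicate points.

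\textbf{Deducing (ENDP).} Enlarging $t_0$ we may assume $e^{-\alpha t_0}<1$ and set $c_0:=e^{-\alpha t_0}\in(0,1)$. Then for every $t\ge t_0$ the height function $u_t:=u$ satisfies $e^{-\alpha t}\le c_0$, hence the $(c_0,d_t)$-Margulis inequality with respect to $I_{B^H(1),t}$. Since $H$ here plays the role of the subgroup $P$, this is exactly property (ENDP) for $H$ with respect to $(X,F)$.
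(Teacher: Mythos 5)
The statement you are proving is, in the paper, simply quoted from \cite[Lemma 4.3]{GS}: the authors give no proof of the Margulis inequality itself, and the only part they actually supply is the closing sentence, namely that the $(e^{-\alpha t},d_t)$-inequality implies (ENDP). Your final paragraph reproduces that deduction correctly (enlarge $t_0$, set $c_0=e^{-\alpha t_0}$), so on the portion the paper is responsible for you are fine, and your factorwise reduction is also consistent with how \cite{GS} sets things up.

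The gap is in your reconstruction of the cited lemma, at precisely the step you flag as delicate. Your claim that the cross-terms can be absorbed into $d_t$ because ``a point deep in one cusp is uniformly far from every other'' is false in higher rank: already in $\SL_3(\Z)\backslash\SL_3(\R)$ a lattice may simultaneously have a very short vector and a very short rank-two sublattice, so several of the functions $\phi_j$ can be large at the same point. This simultaneous-shortness phenomenon is exactly why the Eskin--Margulis--Mozes argument (\cite{EMM}, the ``systems of integral inequalities'') does not bound each $\phi_j$ separately but proves a coupled system of inequalities involving products $\phi_{j-1}\phi_{j+1}$, and then forms a height function as a linear combination $\sum_j \omega_j(t)\phi_j$ with coefficients tuned to $t$ so that the cross-terms are dominated. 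This is also why the functions $u_t$ in the statement (and in \cite{GS}) genuinely depend on $t$; your parenthetical assertion that $u$ ``may be taken independent of $t$'' is not delivered by your sketch and contradicts the mechanism that makes the contraction work. The one-vector estimate $\int_{B^H(1)}\|g_th\,w\|^{-s}\,d\nu(h)\ll e^{-\alpha t}\|w\|^{-s}$ also requires care: it fails for vectors $w$ lying in the non-expanded directions for $F$ unless one exploits the full weight decomposition of the representation restricted to $F$ and the irreducibility of $\Gamma$, which is where the hypothesis that the projection of $F$ to each factor is unbounded enters. As written, your argument would not close without importing these points from \cite{EMM} and \cite{GS}.
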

\ignore{The above theorem immediately implies the following corollary
\begin{cor}
Let $X$ be as in Theorem \ref{thmguanshi} and $F^+$ be be a one-parameter {$\Ad$-}diagonalizable
{sub}semigroup of $G$ such that the projection of $F^+$ to each $G_i$ is unbounded. Then $H$ as in \equ{uhs} has property {\rm (ENDP)} with respect to $(X, F)$.

\end{cor}

}

{We remark that, since the flow $(X,F)$ as in the above theorem is  exponentially mixing, the expanding horospherical subgroup $H$ has property (EEP) with respect to $(X, F)$ as well.}

\subsection{The main results} We are now ready to state {our} main theorem. 

\smallskip
\ignore{In what follows, the notation ${A\gg B}$,
where  $A$ and $B$ are quantities depending on certain parameters, will mean ${A \ge  CB}$,  
with {$C$ being a constant} 
dependent  only on $X$ and $F^+$. }

\ignore{\begin{thm}\label{dimension drop 2} Let $G=\SL_{m+n}(\R)$, $\Gamma= \SL_{m+n}(\Z)$, fix $T > 0$  and let $g = g{T}$ where $g_t$ is as in \eq{gt}. Then 
here exist positive constants $K_P, p' \ge 1$, and $r_3 \le 1 $ such that for any open subset $O$ of $\ggm$ and any $r$ satisfying $0<r<\min( \mu (\sigma_r O)^{p'}, r_3)$, the set $E(g,O)$ has Hausdorff codimension at least
$$ K_P \frac{\mu(\sigma_r (O ))}{\log \frac{1}{r}}. $$ 
\end{thm} }

\ignore{\begin{thm} \label{delta average}
There exist positive constants $K_P, p' \ge 1$, and $r_3 \le 1 $ independent of $O$ such that for any $0 < \delta \le 1$, and for any $r$ satisfying $0<r<\min( \mu (\sigma_r O)^{p'}, {{r_{2}}})$, the set$$\{ x \in X:x \,\,\, \delta \textsl{-escapes} \,\,O \,\, \textsl{on average}  \}$$ has Hausdorff codimension at least
$$ K_P \delta \frac{\mu(\sigma_r (O ))}{\log \frac{1}{r}}. $$ 
\end{thm}}

{

{{\begin{thm}\label{dimension drop 3} 
Let $G$ be a Lie group, $\Gamma $   a 
lattice in $G$, $X = G/\Gamma$,  
$F$   a one-parameter {$\Ad$-}diagonalizable
{sub}semigroup of $G$, $H$ the unstable horospherical subgroup relative to $F$, and 
 $P$   a subgroup of $H$ which is normalized by $F$ and  has properties {\rm (EEP)} and {\rm (ENDP)} with respect to the flow $(X,F)$. Then 
{for any non-empty open subset $O$ of $X$ one has}
$${\inf_{x \in X} \codim \left(\big\{h\in P: hx\in \widetilde E({F},O)\big\} \right)  >0. }$$  
  \end{thm}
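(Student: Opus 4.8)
The plan is to combine the two abstract hypotheses (EEP) and (ENDP) via the standard machinery of integral inequalities to control the measure of points in a $P$-orbit whose $g_t$-translates escape a neighborhood of $O$, and then convert this into a Hausdorff dimension bound by a covering argument. More precisely, fix a non-empty open $O$, choose a small ball $B(x_0,\rho)\subset O$, and fix a smooth bump function $\psi$ supported in that ball with $\int_X\psi\,d\mu=:\kappa>0$. Write $N$ for a large integer time-scale $T$ and work with the discretized flow $\{g_{kT}: k\in\mathbb{Z}_{\ge 0}\}$. The key object is, for $x\in X$ and $h\in P$, the indicator of the event ``$g_{kT}hx\notin O$''; this is dominated by $1-\frac{1}{\kappa}\psi(g_{kT}hx)$ pointwise (up to normalization), so that controlling the set where $g_{kT}hx$ misses $O$ for all $k$ in a long window reduces to a multiplicative/iterative estimate for the operators $I_{B^P(1),T}$.

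The heart of the argument is a single-step inequality of the following shape: for $t=T$ large enough (past $t_0$ and past the injectivity-radius threshold in \equ{conditionont}), using the height function $u_T$ from (ENDP) together with (EEP) applied to $\psi$, one shows that the function $w_T:=u_T$ (or a suitable power/combination of $u_T$ with a constant) satisfies, for every $x$,
$$\int_{B^P(1)} \mathbf{1}_{\{g_Thx\notin O\}}\, w_T(g_Thx)\, d\nu(h)\ \le\ \theta\, w_T(x) + D,$$
with a contraction ratio $\theta=\theta(T)<1$ that can be taken uniformly bounded away from $1$, and $D=D(T)<\infty$. Here the (ENDP) Margulis inequality provides the $c_0 u_T(x)+d_T$ bound for the ``bulk'' part where $u_T$ is large, while (EEP) shows that on the region where $u_T$ is bounded the $g_T$-translate of the measure $1_{B^P(1)}\,d\nu$ equidistributes, so a definite proportion $\gtrsim\kappa$ of its mass lands inside $O$, forcing the indicator to kill that proportion and yielding an extra gain that pushes the constant strictly below $1$. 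This is exactly the mechanism of \cite{KMi2}; the abstract hypotheses are designed precisely so that this step goes through verbatim. Iterating over $k=1,\dots,n$ and tiling $P$ by translates of $B^P(1)$ (using that $P$ is normalized by $F$, so $g_{kT}$ conjugates $B^P(1)$ to a fixed-shape set and the Jacobians are controlled), one obtains an estimate of the form: the set of $h$ in a bounded piece of $P$ with $g_{kT}hx\notin O$ for all $0\le k\le n$ can be covered by $\ll (C_1)^n$ balls of radius $\ll (C_2)^{-n}$ with $C_1<C_2$, giving a box-counting/Hausdorff codimension lower bound $\gg \log(C_2/C_1)/\log C_2>0$, uniformly in $x$. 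Finally, $\widetilde E(F,O)=\bigcup_{Q}E(F\smallsetminus Q,O)$ is a countable union (over an exhaustion of $G$ by compact sets) of sets of the above type shifted in time, and Hausdorff dimension is stable under countable unions and under the locally bi-Lipschitz action of $g_s$, so the codimension bound for $\{h\in P: hx\in\widetilde E(F,O)\}$ follows with the same constant.

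The main obstacle I anticipate is not any single estimate but the bookkeeping needed to make the single-step inequality genuinely uniform in $x$: the time $T$ at which (EEP) becomes effective depends on $r_0(x)$ via \equ{conditionont}, while the height function $u_T$ and the constant $d_T$ from (ENDP) depend on $T$; one must choose $T$ once and for all (depending on $O$ but not on $x$) and then handle points $x$ of small injectivity radius by first applying the contraction to push their $g_T$-translates into a fixed compact core before invoking equidistribution — i.e. one really proves the iterative estimate for the sequence $u_T(g_{kT}hx)$ rather than for a fixed function, and must check that the contraction constant and additive error can be chosen uniformly over the relevant range of $k$ and $x$. A secondary technical point is passing from the discrete times $kT$ to the full semigroup $F$ and from the single ball $B^P(1)$ to all of $P$: both are routine given regularity of the height function \equ{reg} and the fact that $F$ normalizes $P$, but they require care with the interplay of the metric on $P$, the conjugation Jacobians of $g_t$ on $\mathrm{Lie}(P)$ (all eigenvalues $>1$ since $P\subset H$), and the covering-number accounting. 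I would organize the write-up so that the abstract iterative lemma (contraction for $I_{B^P(1),T}$ against the escape indicator) is isolated first, and the dimension bound is then a formal consequence proved in a separate subsection.
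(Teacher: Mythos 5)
Your proposal assembles the right ingredients --- the Margulis inequality from (ENDP) for the cuspidal excursions, effective equidistribution from (EEP) for the compact part, Bowen-box coverings, and countable stability --- but the proposed synthesis, a single contraction inequality
$\int_{B^P(1)} \mathbf{1}_{\{g_Thx\notin O\}}\, w_T(g_Thx)\, d\nu(h)\le \theta\, w_T(x)+D$
with $\theta<1$, does not actually combine the two mechanisms, and this is where the real content of the theorem lies. First, with the additive constant $D$ present, the inequality is satisfied by the height function alone (drop the indicator and apply \equ{mi1}); in the compact regime, where $w_T(x)$ is bounded, the right-hand side is dominated by $D$ and the inequality records no loss of measure at all, so the equidistribution gain is invisible. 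To make the gain visible you must absorb $D$, i.e.\ prove $I(\mathbf{1}_{O^c}\cdot w)\le\theta w$ with no additive term, and that forces a circular choice of constants: the time $T$ at which (EEP) applies on the sublevel set $\{u_T\le M_0\}$ depends on $M_0$ through the injectivity radius, while $M_0$ must dominate $d_T/(\theta-c_0)$ and $d_T$ depends on $T$. The paper breaks this circularity only by running the two estimates on \emph{two different time scales} ($t$ from (ENDP), $kt$ with $k$ large for (EEP)) and by splitting the exceptional set into the trajectories that never return to a compact set $Q$ (part (1) of Theorem \ref{first}) and those that do (part (2)); your single fixed $T$ cannot do both jobs.

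Second, and more fundamentally, even the absorbed inequality cannot be iterated in the way you need. The $n$-fold composition of $I_{B^P(1),T}$ lives on products $g_Th_n\cdots g_Th_1x=g_{nT}\,\eta_{n,T}(h_1,\dots,h_n)\,x$, and relating these to the genuine orbit points $g_{kT}hx$ costs a bounded group displacement (cf.\ Lemma \ref{1lem}). For the \emph{regular} height function this costs a factor $C$ via \equ{reg1}; for the indicator $\mathbf{1}_{O^c}$ there is no regularity, and the displacement forces you to replace $O$ by $\sigma_{2}O$, which may be empty when $O$ is small --- precisely the case of interest. The paper avoids this by never convolving against $\mathbf{1}_{O^c}$: the equidistribution gain is extracted as a deficient count of Bowen boxes inside a tessellation (Theorems \ref{main cov}, \ref{main cor}), where the comparison is done within a single box and costs only $\sigma_{4\theta}O$ with $\theta$ arbitrarily small. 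Finally, a trajectory may alternate between the cusp and the compact part, and neither estimate alone covers it; the paper handles this with the combinatorial interleaving of Proposition \ref{abstract} (the decomposition over subsets $J\subset\{1,\dots,N\}$ recording when the orbit is outside $Q$, with the interface terms $d_{J,N}$, $d'_{J,N}$ producing the $\sqrt{k_3a_2}$ correction in \equ{a3}). Your write-up has no mechanism for these alternating trajectories; the sentence ``this is exactly the mechanism of \cite{KMi2}\dots goes through verbatim'' defers to the source at the one step that is genuinely nontrivial. To repair the proposal you would need either to supply that interleaving argument, or to construct a single Margulis function with a ``hole'' at $O$ satisfying a genuinely additive-constant-free contraction that is stable under the convolution comparison --- neither of which follows from the abstract hypotheses without substantial further work.
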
 }}

Applying the above theorem with $P = H$ and using {Theorems \ref{thmheep} and \ref{thmguanshi}} together with the standard slicing technique, we get
{\begin{cor}\label{dimension drop 2} 
{Let $G, \Gamma , X$ and ${F}$ 
be as in Theorem \ref {thmguanshi}.}
 Then for any non-empty open subset $O$ of $X$ one has
$ \dim \widetilde E({F},O) < \dim X      $; {that is, DDC holds in this generality}.
\end{cor}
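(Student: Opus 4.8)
The plan is to deduce Corollary~\ref{dimension drop 2} from Theorem~\ref{dimension drop 3} applied with $P=H$. First I would check that the hypotheses of Theorem~\ref{dimension drop 3} are met in the setting of Theorem~\ref{thmguanshi}: the subgroup $H$ is obviously normalized by $F$; property (ENDP) for $H$ is precisely the final assertion of Theorem~\ref{thmguanshi}; and property (EEP) for $H$ follows from Theorem~\ref{thmheep} once we know that the flow $(X,F)$ is exponentially mixing, which in turn holds because $X$ is an irreducible quotient of a semisimple group without compact factors, hence the regular representation has a strong spectral gap (as recalled in the text). This already yields
$$\inf_{x\in X}\codim\big(\{h\in H: hx\in\widetilde E(F,O)\}\big) = \kappa > 0$$
for some $\kappa$ depending only on $X$ and $F$.

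The remaining point is to pass from a uniform positive codimension bound along $H$-orbits to the genuine statement $\dim\widetilde E(F,O) < \dim X$. This is where the standard slicing (Fubini-type) technique enters. Since $H$ is the unstable horospherical subgroup of an $\Ad$-diagonalizable $F$, we have a decomposition near the identity of $G$ as a product of $H$ with a complementary subgroup (the stable-plus-central part); concretely, choosing a small bounded neighborhood, every point of $X$ locally looks like $\{h\,y : h\in B^{H}(r_1),\ y\in W\}$ where $W$ is a small piece of a complementary transversal and the product map is bi-Lipschitz onto its image. I would cover $X$ by countably many such product charts. Within each chart, the set $\widetilde E(F,O)$ meets almost every $H$-slice $B^{H}(r_1)y$ in a set of dimension at most $\dim H - \kappa$ (this uses $F$-invariance of $\widetilde E(F,O)$ in the relevant sense: since $F$ normalizes $H$ and $\widetilde E(F,O)$ is defined by an eventual-avoidance condition, translating $x$ by an element of the complementary transversal only changes which $x$ we are looking at, and the bound in Theorem~\ref{dimension drop 3} holds for \emph{every} $x$, hence for every slice, not merely almost every one).

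Now I would invoke the elementary Marstrand-type slicing inequality: if a set $E$ in a product of metric spaces $U\times V$ has the property that $\dim(E\cap(U\times\{v\})) \le s$ for every $v\in V$, then $\dim E \le s + \dim V$. Applying this with $U$ a neighborhood in $H$, $V$ a neighborhood in the complementary transversal (so $\dim U + \dim V = \dim G = \dim X$), and $s = \dim H - \kappa$, we obtain $\dim(\widetilde E(F,O)\cap(\text{chart})) \le \dim X - \kappa$ in each chart; taking the countable union over charts and using countable stability of Hausdorff dimension gives $\dim\widetilde E(F,O)\le \dim X - \kappa < \dim X$, which is exactly DDC (and a fortiori applies to $E(F,O)\subset\widetilde E(F,O)$).

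The only genuinely delicate point is making the slicing argument clean: one must be sure that the complementary transversal can be chosen $F$-equivariantly enough (or simply that the per-$x$ uniformity in Theorem~\ref{dimension drop 3} sidesteps any need for equivariance — which it does, since the codimension bound is stated with $\inf_{x\in X}$), and that the local product structure is bi-Lipschitz with constants controlled on the chosen charts, so that Hausdorff dimension is genuinely preserved under the local coordinates. Since the injectivity radius is positive on any bounded set and $X$ is covered by countably many bounded sets, uniform bi-Lipschitz control on each chart is automatic, so I expect this step to be routine; the substantive content is entirely in Theorem~\ref{dimension drop 3}, whose proof occupies the body of the paper.
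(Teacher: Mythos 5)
Your verification of the hypotheses of Theorem \ref{dimension drop 3} (with $P=H$, via Theorems \ref{thmheep} and \ref{thmguanshi} and exponential mixing) is correct and is exactly what the paper does. The gap is in the slicing step. The inequality you invoke --- ``if $E\subset U\times V$ and $\dim\big(E\cap(U\times\{v\})\big)\le s$ for every $v$, then $\dim E\le s+\dim V$'' --- is \emph{false} for Hausdorff dimension. A fibrewise bound does not propagate to the whole set: for instance, there exist continuous functions $f:[0,1]\to\R$ whose graphs have Hausdorff dimension $2$, even though every vertical fibre is a single point; more generally $\dim(A\times B)$ can exceed $\dim A+\dim B$. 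The correct Fubini-type statements (Wegmann's product theorem, which the paper cites) bound the dimension of an honest \emph{product} $A\times V$, not of a set whose fibres over $V$ all happen to have small dimension but vary with the base point. In your chart the fibre over the transversal point $y=h'x$ is $\{h: hy\in\widetilde E(F,O)\}$, and these sets genuinely depend on $h'$, so you cannot pass to a product without an extra argument.

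The paper closes this gap by trapping $E_x$ inside a single product set. Writing $g=h'h$ with $h'\in B^{\widetilde H}(\rho)$, $h\in B^H(\rho)$, one uses that conjugation by $g_t$ is non-expanding on $\widetilde H=H^-H^0$ (so $g_th'g_{-t}\in B^{\widetilde H}(2\rho)$ for all $t\ge0$) to deduce from $\dist(g_tgx,\,g_thx)=\dist\big(g_th'g_{-t}\cdot g_thx,\,g_thx\big)\le 2\rho$ that $gx\in\widetilde E(F,O)$ forces $hx\in\widetilde E(F,\sigma_{2\rho}O)$. Hence $E_x\subset A\times B^{\widetilde H}(\rho)$ with the \emph{fixed} set $A=\{h\in B^H(\rho): hx\in\widetilde E(F,\sigma_{2\rho}O)\}$, and Wegmann's theorem applies to this product. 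This is also why Theorem \ref{dimension drop 3} must be applied to the inner core $\sigma_{2\rho}O$ rather than to $O$ itself (and why one needs $\sigma_{2\rho}O\neq\varnothing$ for small $\rho$) --- a step absent from your proposal. Your remark about equivariance is beside the point; the per-$x$ uniformity of Theorem \ref{dimension drop 3} is indeed available, but it is not what the argument needs. You need the containment in a product, which rests on the dynamical non-expansion estimate \equ{conjugate implied}.
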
 

{We remark that the main result of \cite{KMi2} in the case \equ{gt} actually contains an effective upper bound for the dimension of $E({F},O)$. In the more general set-up of this paper it is also possible to make our estimates effective. {For that one needs to find a lower bound for the injectivity radius of compact sets $\{ x: u_t(x) \le M   \}$. Such bounds can be obtained for a large class of height functions including $u_t$ as in Theorem \ref {thmguanshi}  using a similar procedure as in \cite[Proposition 26]{SS} and \cite[Lemma 6.3]{BQ}.}
We have decided not to overcomplicate the exposition with the proof of the stronger result;  however see \S\ref{remarks} for some indications of the proof.}

{Another remark is that, similarly to \cite{KMi2}, we could have considered cyclic semigroups $F$ of the form {$\{g^t :  t\in\Z_+\}$}, where $g$ is an $\Ad$-diagonalizable element of $G$. Then, after replacing $g_t$ with $g^t$ in \equ{ift}, the conclusions of Theorem \ref{dimension drop 3} and Corollary \ref{dimension drop 2}  can be established for discrete-time actions, with minor modifications of the proofs.}
\smallskip

{The structure of the paper is as follows. In the next section we state a technical  theorem (Theorem \ref{first}) and show how it implies  Theorem \ref{dimension drop 3} and Corollary \ref{dimension drop 2}. 
The proof of Theorem \ref{first} occupies the bulk of the paper. It has two main ingredients: one deals with orbits staying inside a fixed compact subset of $X$, which are handled in \S\S\ref{measest}--\ref{boxes} with the help of the effective equidistribution assumption. 
The other one (\S\S \ref{iterations}--\ref{escape}) takes care of orbits venturing far away into the cusp of $X$; there we use property (ENDP) via the method of integral inequalities for height functions on $X$. The two ingredients are combined in  {\S\S\ref{abstractlemma}--\ref{endofproof}.}
Some concluding remarks, including an application to joint Dirichlet improvement in \da, are presented in \S\ref{remarks}.}

\smallskip

{\noindent
\textbf{Acknowledgements.}
The authors are grateful to Alex Eskin for bringing  Mirzakhani's question to their attention, and to  Victor Beresnevich  for useful remarks, in particular for asking a question that led to Theorem \ref{jdi}.}


\ignore{It what follows, if $P$ is a subgroup of $G$, we will denote by $B^{P}(r)$  the open ball of radius $r$ centered at the identity element with respect to the  
metric  on $P$ coming from the Riemannian structure induced from $G$. In particular, we have $$B^H(r) = \{h_s: s\in M_{m,n},\ \|s\|\le r\}.$$ The ball $B^G(r)$ in $G$ will be simply denoted by 
$B(r)$,
and $B(x,r)$ will denote  the open ball of radius $r$ centered at $x \in X$. Given  $x\in X$, let us denote by $\pi_x$ the map  $G\to X$ given by {$\pi_x(h) = hx$}, and by  $r_0(x)$ the \textsl{injectivity radius} of $x$, defined as $$
\sup\{r > 0: 
\pi_x\text{ is injective on }B(r)\}.$$
If $K\subset X$ is bounded, let us denote by $r_0(K)$ the \textsl{injectivity radius} of $K$: $$
r_0(K) := \inf_{x\in K}r_0(x) = \sup\{r > 0: 
\pi_x\text{ is injective on }B(r)\  \ \forall\,x\in K\}.$$}
{
  \smallskip

\ignore{The following statement is a covering result which will be used in the proof of previously mentioned results.
\ignore{\begin{cor}\label{main cor}
For any $0<r <r_1,0<\beta<1/4, t>\frac{4}{(m+n)} \log \frac{1}{r}, k \in \N  $, and any $x \in X$, the set $A({t,r,{Q_{\beta,t}}^c, k, x})$ can be covered with $\frac{\tilde{\alpha}(x)}{m_{\beta,t}} {C_{3}}^k r^{k-1} t^k e^{mn(m+n-\frac{\beta}{mn})kt} $
balls of radius $re^{-(m+n)kt}$ in $H$, where $C_{3}>0$ is independent of $r,k$, and $t$. 
\end{cor}
\begin{cor}\label{cusp cor}
For any $0<r <r_1,0<\beta<1/4, t>\frac{4}{(m+n)} \log \frac{1}{r}$, and any $x \in X$, the set $\bigcap_{N \in \N}           Z_x({r,t,N,m_{\beta,t}})$ has Hausdorff dimension at most $mn- \frac{\beta}{m+n}}+ \frac{\log (C_1rt)}{(m+n)t}$.
\end{cor}

\begin{prop}\label{first}
There exists $C_{2}>0$ such that for all $r>0$, $0< \beta< 1/4$, and $t \ge 1$ satisfying
\eq{main eq}{ e^{-\lambda (t-a)} <r \le \min( e^{-p \beta t}, r_2),}
all $x \in \partial_r Q_{\beta,t}$, and all $k \in \N$, the set ${A}(t,{\frac{r}{16 \sqrt{L} }},{O^c},{k},x)$ can be covered with 
$$e^{mn(m+n)kt}   \left(1-  K_2 \mu ({{{\sigma  _{r}}{({Q_{\beta,t}}^c \cup U)}}})+\frac{K_1e^{-\lambda t}}{r^{mn}} +C_{2}r^{\frac{mn}{2}} t e^{-\frac{\beta t}{2}} \right)^k     $$
balls of radius $r e^{-(m+n)kt}  $ in $H$.
\end{prop}
\begin{cor} \label{bound}
For all $0<r \le 1$, $0< \beta< 1/4$, and $t \ge 1$ satisfying \equ{main eq} and for all $x \in \partial_r Q_{\beta,t}$, the set $\bigcap_{k \in \N} {A}(t,{\frac{r}{16 \sqrt{L} }},{O^c},{k},x)$ has Hausdorff dimension at most $ mn- \frac{\log \left((1-  \mu ({{{\sigma _{r}}{({Q_{\beta,t}}^c \cup U)}}})+\frac{K_1e^{-\lambda t}}{r^{mn}}+C_{2}r^{\frac{mn}{2}} t e^{-\frac{\beta t}{2}} \right)}{(m+n)t}.$
\end{cor}}
\section{{Theorem \ref{first} $\Rightarrow$ Theorem \ref{dimension drop 3} $\Rightarrow$ 
Corollary \ref{dimension drop 2} 
}}\label{easyproofs}
   Let us introduce the following notation: for a {non-empty} open subset $O$ of $X$ and $r > 0$ denote by $\sigma_rO$ the \textsl{inner $r$-core\/} of $O$, defined as
\eq{inner-core}
{\sigma_r O :=  \{x\in X: {\dist}(x,O^c) > r\}.}
{This is an open subset of $O$, {whose measure is close to $\mu(O)$ for small enough values of $r$. 
\smallskip}

{{Furthermore}, for a closed subset $S$ of $X$ denote  by  $\partial_rS$ the \textsl{$r$-neighborhood\/}   of $S$, that is, 
 $$
\partial_rS :=  \{x\in X: {\dist}(x,S)  < r\}.
$$
 In particular, for $z\in X$ we have $\partial_r\{z\} = B(z,r)$,   the open ball of radius $r$ centered at $z$.
 Note that we always have \eq{boundarysigma}{\partial_rS \subset \big(\sigma_r(S^c)\big)^c\text{ for all }S\subset X,\ r > 0.}
{Also, given $G$, $H$, and  $F = \{g_t: t\ge 0\}$ as in Theorem \ref{dimension drop 3}, for any subgroup $P$ of $H$ that is normalized by $F$ define:  
\eq{b1}{\lambda_{\min} := \min \big\{ \lambda :\,\lambda \text{ is an eigenvalue of }\ad_{{g_1}}|_{\mathfrak p}\big\} 
}
and
\eq{b2}{\lambda_{\max} := \max \big\{ \lambda :\,\lambda \text{ is an eigenvalue of }\ad_{{g_1}}|_{\mathfrak p}\big\}. }
 {Note that all eigenvalues of the restriction of $\ad_{{g_1}}$ to $\mathfrak p$, including $\lambda_{\min}$ and $\lambda_{\max}$, are positive.}}

{In this section we   derive Theorem \ref{dimension drop 3} from the following crucial {but technical} theorem.} 

\begin{thm}\label{first}
Let $G, \Gamma, X, F = \{g_t: t\ge 0\}$ and $P$ be as in  Theorem \ref{dimension drop 3},  and let $p = \dim P$.
Then there exist {$$r_*, C_1, C_2, a',b', \lambda> 0$$} such that the following holds:\\
For any $0 < c < 1$ there exist $t> 0$ and  a  compact subset  $Q$ of $X$ 
such that: 
\begin{enumerate}
\item
For {all $x \in X$,} and for all $2 \le k \in \N$, the set
{\eq{S1}{{S(k,t,x) :=  \left\{h \in P :{{g_{Nkt}}}hx \notin  Q  \,\,\, \forall N \in \N    \right\}}}}
{satisfies
{\eq{S1 codim}{{\codim {S(k,t,x)}} \ge  { \frac{1}{{\lambda_{\max}} kt }  \log {\frac{1-c}{4c}} }.}}}
\item
For all $2 \le k \in \N$, all $r$ satisfying
{\eq{ineq beta2}{{ {e^{ \frac{a' - kt}{b'}}
} \le r  < \frac{1}{4}\min\big( r_0 \left( \partial_1 Q  \right), {r_*}\big)},}} all $\theta\in {\left[r ,\frac {r_*}{2}\right]}$, all $x \in X$, and for all open subsets $O$ of $X$
we have 
\eq{S2 codim}{ 
{{\codim\left({\left\{h\in P \ssm S(k,t,x) : hx\in \widetilde {E}({F},O) \right\}}\right)
} 
\ge {{ \frac{ \mu \big( {\sigma_{4 \theta}} O  \big)- {{\frac{8C_1}{\theta^{p}}} {{\frac{\sqrt c}{1-c}}}}  - \frac{C_2}{r^{p} }e^{-\lambda kt} }{{\lambda_{\max}} kt}}}}
.}
\end{enumerate}
 \end{thm}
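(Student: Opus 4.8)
The plan is to build, for every $x\in X$, a rooted tree of coverings of subsets of $P$ by balls $B^P(\cdot)$, where passing from level $N$ to level $N+1$ corresponds to applying the time-$kt$ dynamics once more, and then to read the codimension off the competition between the multiplicative growth of the number of balls and the exponential decay of their radii. Since $P$ is normalized by $F$, the map $h\mapsto g_{kt}hg_{-kt}$ dilates $P$ along the eigenspaces of $\ad_{g_1}|_{\mathfrak p}$, whose eigenvalues all lie in $[\lambda_{\min},\lambda_{\max}]$; thus $g_{kt}B^P(r)g_{-kt}$ is squeezed between $B^P(re^{\lambda_{\min}kt})$ and $B^P(re^{\lambda_{\max}kt})$ and has $\nu$-measure $e^{\kappa kt}$ with $\kappa:=\Tr(\ad_{g_1}|_{\mathfrak p})$, which is exactly what produces the factor $\lambda_{\max}kt$ in the denominators of \equ{S1 codim} and \equ{S2 codim}. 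Each ball at level $N$ (of radius $\asymp e^{-\lambda_{\max}Nkt}$ in part (1), $\asymp re^{-\lambda_{\max}Nkt}$ in part (2)) is refined into $\asymp e^{\kappa kt}$ children, and the whole task is to discard a definite proportion of the children — either because the corresponding orbit piece has wandered into the cusp, which is controlled by (ENDP) via a height function, or because effective equidistribution (EEP) forces that piece to meet $O$. Here $t$ will be fixed in terms of $c$, and $k$ taken large so that one step of length $kt$ is long enough for the mixing error $e^{-\lambda kt}$ of \equ{eep} to be negligible against the expansion $e^{\kappa kt}$.

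\emph{Part (1).} Given $c\in(0,1)$, let $c_0\in(0,1)$ be the constant from (ENDP); choose $t$ a large multiple of the corresponding $t_0$ (large enough that iterating the one-step Margulis inequality produces an effective contraction constant below $\tfrac{4c}{1-c}$), and let $u_t,d_t$ be the associated height function and constant with $I_{B^P(1),t}u_t\le c_0u_t+d_t$. Put $Q:=\{u_t\le M_t\}$, which is compact because $u_t$ is proper; taking $M_t$ of order $d_t/\sqrt c$ makes $\mu(Q^c)\asymp\sqrt c$ (by invariance of $\mu$ under $I_{B^P(1),t}$ and Markov's inequality), which is what later feeds the error term $\tfrac{8C_1}{\theta^p}\tfrac{\sqrt c}{1-c}$. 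The standard integral-inequality iteration — as in \cite{EMM,EM,KKLM,GS}, tracking the height $u_t$ of each ball along the tree and using the self-similarity above — then covers, for every $N$, the set $\{h\in B^P(1):g_{jkt}hx\notin Q\ \forall\,1\le j\le N\}$ by at most $e^{\kappa Nkt}\big(\tfrac{4c}{1-c}\big)^N$ balls of radius $\asymp e^{-\lambda_{\max}Nkt}$. Intersecting over $N$, summing over a countable cover of $P$ by translates of $B^P(1)$, and applying the elementary upper bound on Hausdorff dimension in terms of such coverings (using $p\lambda_{\max}\ge\kappa$) yields \equ{S1 codim}; the same reasoning applied to $x'=g_{N_1kt}x$ shows \equ{S1 codim} also bounds $\dim$ of the eventual-escape sets $\{h:g_{Nkt}hx\notin Q\ \forall\,N\ge N_1\}$.

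\emph{Part (2).} Run the same tree, but at each node examine whether the current orbit point $g_{Nkt}hx$ lies in $Q$ or in $Q^c$. On a node landing in $Q$, its injectivity radius is $\ge r_0(\partial_1Q)$, and the constraint $r<\tfrac14\min(r_0(\partial_1Q),r_*)$ together with $e^{(a'-kt)/b'}\le r$ puts the (EEP) hypothesis \equ{conditionont} into the form \equ{ineq beta2}; we then apply (EEP) with $f$ a fixed unit bump on $B^P(1)$ and $\psi$ a smooth scale-$\theta$ approximation of $1_{\sigma_{4\theta}O}$, and \equ{eep} shows that the children on which the orbit still avoids $O$ form a $\nu$-proportion at most $1-\mu(\sigma_{4\theta}O)+\tfrac{8C_1}{\theta^p}\tfrac{\sqrt c}{1-c}+\tfrac{C_2}{r^p}e^{-\lambda kt}$; here the first error absorbs the $C^1$- and Sobolev-norm cost $\asymp\theta^{-\ell}$ of the mollification together with the correction $\mu(Q^c)\asymp\sqrt c$, and the second is the mixing error $e^{-\lambda kt}$ divided by the measure $\asymp r^pe^{\kappa kt}$ of the dilated ball (the $e^{\kappa kt}$ cancelling the number of children). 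On a node landing in $Q^c$ we make no gain but note that it is part of an eventual-escape set as in part (1). Combining these two behaviours along one tree — the content of the abstract covering lemma in \S\ref{abstractlemma} — produces, for $h\notin S(k,t,x)$ with $hx\in\widetilde E({F},O)$, a covering at level $N$ by $\lesssim e^{\kappa Nkt}\big(1-\mu(\sigma_{4\theta}O)+\text{errors}\big)^{N}$ balls of radius $\asymp re^{-\lambda_{\max}Nkt}$; using $-\log(1-y)\ge y$ and letting $N\to\infty$ gives \equ{S2 codim}.

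\emph{Main obstacle.} The delicate point is precisely the combination in \S\ref{abstractlemma}. The cusp estimate from (ENDP) is quantitative only in an integrated, height-weighted sense — $I_{B^P(1),t}u_t\le c_0u_t+d_t$ — rather than child-by-child, so one cannot simply multiply the two per-step factors down the tree; the abstract lemma must be arranged so that the height-weighted cusp bound and the purely combinatorial equidistribution count run in tandem along a single tree, with $S(k,t,x)$ carved out to be exactly the portion of $P$ on which only the former is available, and with all error terms tracked uniformly so that one joint choice of small $c$, suitable $\theta$, and large $k$ makes the right-hand sides of \equ{S1 codim} and \equ{S2 codim} simultaneously positive — which is what the deduction of Theorem \ref{dimension drop 3} in \S\ref{easyproofs} requires. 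A lesser technical nuisance is the anisotropy of the $g_{kt}$-action on $P$ (dilation factors spread over $[\lambda_{\min},\lambda_{\max}]$ rather than constant); this only strengthens the bounds and is absorbed by always estimating with $\lambda_{\max}$ on the contracting side and with $\kappa$ on the counting side.
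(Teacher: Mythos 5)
Your overall architecture coincides with the paper's: a tree of Bowen-box coverings in $P$, with (EEP) controlling the steps where the orbit sits in the compact set $Q$, the iterated Margulis inequality from (ENDP) controlling the excursions into $Q^c$, the two merged by the interleaving lemma of \S\ref{abstractlemma}, and the boxes converted to balls only at the very end. However, two of the specific mechanisms you describe are not the ones that make the proof work, and the first would fail if implemented as written.

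First, the term $\frac{8C_1}{\theta^{p}}\frac{\sqrt c}{1-c}$ in \equ{S2 codim} does not arise from arranging $\mu(Q^c)\asymp\sqrt c$ by a Markov-inequality choice of the threshold $M_t$. The equidistribution step cannot even be performed at a node whose orbit point lies in $Q^c$ --- there is no lower bound on the injectivity radius there, so \equ{conditionont} fails for the fixed $t$ --- hence no ``correction $\mu(Q^c)$'' can be folded into the (EEP) error term; and the threshold defining $Q=Q_{c,t}$ is dictated not by a measure computation but by the regularity and expansion constants of $u$: it must be of size $C^3\ell_{c,t}^2$ with $\ell_{c,t}=\max(d/c,\,e^{\alpha t})$, so that exceeding the threshold at one time step forces $u>d/c$ at the previous step, which is exactly what makes \equ{2ine} applicable along the excursion. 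The $\sqrt c$ is instead generated combinatorially in Proposition \ref{abstract}: the per-block rates are $a_1$ (equidistribution) and $a_2=\frac{4c}{1-c}$ (cusp), each switch between a $Q$-block and a $Q^c$-block costs a multiplicative overhead $k_3\asymp C_1^2/\theta^{2p}$, and the counting identity $\sum_{J}n_1^{N-|J|-d_{J,N}}n_2^{|J|-d_{J,N}}n_3^{2d_{J,N}}\le(n_1+n_2+n_3)^N$ converts this into the cross term $\sqrt{k_3a_2}\le\frac{8C_1}{\theta^{p}}\frac{\sqrt c}{1-c}$. Second, in part (1) the escape set is not covered by $e^{\kappa Nkt}\big(\frac{4c}{1-c}\big)^N$ balls of radius $\asymp e^{-\lambda_{\max}Nkt}$ (here your $\kappa$ is the paper's $\delta=\Tr\ad_{g_1}|_{\mathfrak p}$): that is the count of Bowen boxes, and each box is anisotropic with side lengths spread over $[e^{-\lambda_{\max}Nkt},e^{-\lambda_{\min}Nkt}]$, so covering one box by balls of the smallest radius costs an extra factor $e^{(p\lambda_{\max}-\delta)Nkt}$ (Lemma \ref{coveringballs}); the correct total is $e^{p\lambda_{\max}Nkt}\big(\frac{4c}{1-c}\big)^N$, which still yields \equ{S1 codim}, whereas your intermediate count would give the false stronger bound with $\delta/\lambda_{\max}$ in place of $p$.
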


{We now show how the two estimates are put together.}

\begin{proof}[Proof of Theorem \ref{dimension drop 3} assuming Theorem \ref{first}]
{Recall that we are given the constants {$r_*, C_1, C_2, a',b', \lambda > 0
  $ }  such that statements (1), (2) of Theorem \ref{first} hold.
Let $O$ be an open subset of $X$. Define \eq{su1}{\theta_{O}:= \sup \left\{ 0<\theta\le 1: \mu({\sigma_{4 \theta}}O) \ge \frac{1}{2} \mu(O) \right \},
}
then put $ \theta := \min (\theta_O, \frac{r_*}{2})$ and 
\eq{c}{c:= \min \left ({\frac{1}{4 e^{1/2}+1}}, { { \left(\frac{\mu(O)}{128C_1} \cdot \theta^p \right)^2 } }\right). }
Now choose $t$ and $Q$ as in the assumption of Theorem \ref{first}.
Then in view of \equ{c}, statement (1) of Theorem \ref{first} readily implies that for any $2 \le k \in \N$ one has
{\eq{estimate1}{{\codim S(k,t,x)} \ge  \frac{1}{2{\lambda_{\max}} kt  }.  
}}
}
Next,  let 
 \eq{t define}{r:= \frac{1}{{{4} }}\min\big( r_0 \left( \partial_1 Q  \right), {r_*}, \theta_O\big).
 }

{

{
\ignore{, hence $t \ge t_1$. {Furthermore, since $t_1 \ge 4$ it follows from 
\equ{t redefine} that 
\eq{t reredefine}{\ r \le C_{2} e^{-apt}.}}}}
}


Clearly the second inequality in \equ{ineq beta2} is then satisfied.
Now take $2 \le k \in \N$ sufficiently large so that
\eq{klimitx}{{e^{ \frac{a' - kt}{b'}}
} \le r \quad\text{and}\quad \frac{C_2}{r^p}e^{-\lambda kt} \le \frac{\mu(O)}{8};}
{
this will imply the first inequality in \equ{ineq beta2}. Also it is easy to see from  \equ{t define} that 
$\theta\in {\left[r ,\frac{r_*}{2}\right]}$; hence   \equ{S2 codim} holds.

Observe that since $\theta \le \theta_O$,   by  definition of $\theta_O$, 
we have
\eq{tetameasure}{\mu \big({{{\sigma _{{4 {\theta}}}}{ O}}}\big) \le \frac{\mu(O)}{2}.}
Definition of $c$ implies
\eq{climit}{ {{\frac{8C_1}{\theta^{p}}} {{\frac{\sqrt c}{1-c}}} \underset{c<1/2}{<} {\frac{8C_1}{\theta^{p}}} \cdot 2 \sqrt{c} \underset{\equ{c}} \le} \frac{\mu(O)}{8}.}
Hence, by combining \equ{klimitx}, \equ{tetameasure}, and \equ{climit}, we conclude that the numerator in the right hand side of \equ{S2 codim} is not less than $\mu(O)/4$. 
Thus \equ{S2 codim} implies $${\codim\big({\{h\in P \ssm S(k,t,x) : hx\in \widetilde {E}({F},O) \}}\big)} \ge
{\frac{ \mu(O)} {4{\lambda_{\max}} kt }}.
$$
Combining it with  \equ{estimate1}, we obtain
$$
\codim\big({\widetilde{E}(F,O) \cap Px}\big)  \ge  \frac{1}{4{\lambda_{\max}} kt} {\min\big(2 ,
{{\mu(O)} }\big)}  = \frac{ \mu(O)} {4 {\lambda_{\max}} kt } ,
$$
{which is a positive number independent of $x$.}}
\ignore{So in view of \equ{in2}, \equ{in3}, \equ{in4}, \equ{con} and by Theorem \ref{first} applied with $r={r(U,a)}$ and $\theta=\theta_U$, the set {$E(g,U)$} has Hausdorff codimension at least 
\begin{align*} 
& \min \left(\dim {S(k,t)},\dim S_2(U,k,t) \right) \\
&  \ge \min 
\left(\frac{-\log \left(1- K_1 \mu (\sigma_{3 \theta_U}(U))+\frac{K_1}{4} \mu(U) \right)}{{\log a} \cdot k(m+n)\left \lceil {\frac{1}{2p}             \log_a \frac{C_{2}}{{r(U,a)} }  } \right \rceil},\frac{1}{{\log a}\cdot 4(m+n)k} \right)          \\
&  \ge \min 
\left(\frac{-\log \left(1- \frac{K_1}{4} \mu (U) \right)}{{\log a} \cdot k(m+n)\frac{1}{2p}              \log_a \frac{C_{2}}{ {r(U,a)}}},\frac{1}{{\log a}\cdot 4(m+n)k} \right)          \\
& \ge \min 
\left( \frac{\frac{K_1   \mu (U)}{2}} {{\log a} \cdot k(m+n)\frac{1}{2p}            \log_a \frac{C_{2}}{  {r(U,a)}}}, \frac{1}{{\log a} \cdot4(m+n)k} \right) \\
& \ge \min 
\left( \frac{K_4  \mu (U)} {             \log \frac{C_{2}}{ {r(U,a)} }},\frac{1}{{\log a} \cdot4(m+n)k}\right) \\
& \ge \min 
\left( \frac{K_4  \mu (U)} {             \log \frac{1}{ {r(U,a)} }},\frac{1}{{\log a} \cdot4(m+n)k}\right) \\
& \ge \frac{K_4 \mu (U)}{\log \frac{1}{ {r(U,a)} }},  
 \end{align*}
where $K_4=\frac{p K_1}{k(m+n)}$ is a constant that is independent of $U$ and {$a$} {and the last inequality above follows form the fact that ${r(U,a)}  \le e^{-ap_3} = e^{-4apK_1}= e^{-4a(m+n)kK_4}$.} {which is a positive number independent of $x$.} This finishes the proof. 
\end{proof}


\begin{proof}[Proof of Corollary \ref{Cor2}] {Let $x \in X$ and}
let $r_1$ be as in Theorem \ref{dimension drop 2}.
{It is easy to see that  for some positive constants $c_0,c_1<1$ independent of $r$ and $x$ we have
\eq{sr}{    \theta_{B(x,r)} \ge c_0 r           ,  }
and
  \eq{mur}{\mu(B(x,r)) \ge c_1 r^{(m+n)^2-1}} 
 for any $0<r<r_0(x)$. Hence, in view of \equ{cu}, \equ{sr}, and \equ{mur}  for any $0<r<r_0(x)$ we have:
{\eq{min1}{c_{B(r),{a}}  \ge \min \left ( r_1,(c_0r)^{p_2},c_1^{p_1}r^{p_1((m+n)^2-1)},e^{-ap_3} \right),} 
where $r_1,p_1,p_2,p_3$ are as in Theorem \ref{dimension drop 2}.}
Therefore, by Theorem \ref{dimension drop 2} applied with $U= B(x,r)$ and in view of \equ{mur} and \equ{min1} we have for any $0<r<\min \left(r_2,r_0(x),e^{-ap_4}\right)$
$$ \codim E({g},B({x},r))  \gg \frac{{\mu (B({x},r))}}{{\log \left(\frac{1}{r(B(x,r),a)  }\right)}} \ge  \frac{{\mu (B({x},r))}}{{\log \left(\frac{1}{{c_2r^{p_0}}  }\right)}} \ge  {c_3} \frac{{\mu (B({x},r))}}{{\log (\frac{1}{r})}},$$
where 
{
$$p_0=\max \left(p_2,p_1 \left((m+n)^2-1 \right) \right), c_2=\min(c_1^{p_1},c_0^{p_2}) ,r_2=\min \left((r_1/c_2)^\frac{1}{p_0}, 1/2 \right), p_4=\frac{p_3}{p_0},$$ and $c_3=\frac{1}{p_0} \cdot \left({1-\frac{\log \frac{1}{c_2}}{\log \frac{1}{c_2}+\log 2} }\right)$.}}} 
This finishes the proof.
\end{proof}

\begin{proof}[Proof of Corollary \ref{dimension drop 2}]
Let $G, \Gamma , X$ and ${F}$ 
be as in Theorem \ref {thmguanshi}  and let $H$ 
be as in \equ{uhs}. Let $\mathfrak{g}$ be the Lie algebra of $G$,
$\mathfrak{g}_\mathbb{C}$ its complexification, and for $\lambda
\in \mathbb{C}$, let $E_ \lambda$ be 
the eigenspace of
$\Ad\, g_1$ corresponding to $\lambda$.
Let $\mathfrak{h}$, $\mathfrak{{h^0}}$, $\mathfrak{{h^ - }}$ be the
subalgebras of $\mathfrak{g}$ with complexifications:
\[{\mathfrak{h}_\mathbb{C}} = \spn({E_\lambda }: \left| \lambda  \right| > 1),\ \mathfrak{h}_\mathbb{C}^0 = \spn({E_\lambda }: \left| \lambda  \right| = 1),\ \mathfrak{h}_\mathbb{C}^ -  = \spn({E_\lambda }:\left| \lambda  \right| < 1).\]
Note that $\mathfrak{h}$ is the Lie algebra of $H$. Moreover, $\mathfrak{h}^ -$ is the Lie algebra of  {the}
{\sl stable horospherical subgroup} defined by
\[{H^ - } := \{ h \in G:{g_t}h{g_{ - t}} \to e\,\,\,as\,\,t \to  + \infty \}. \] 
Since $\Ad\, g_1$ is assumed to be diagonalizable,
$ \mathfrak{g} $ is the direct sum of
$\mathfrak{h}$, $\mathfrak{{h^0}}$ and $\mathfrak{{h^ - }}$. Hence, if we denote the group ${H^ - }{H^0}$ by $\widetilde H$, $G$ is
{locally (at a neighborhood of identity) a} direct product of $H$ {and} $\widetilde {H }$. 

{Now let $O$ be a non-empty open subset of $X$, and fix ${0 < \rho < 1}$ such that the following properties are satisfied:}
\begin{enumerate}
    \item []
{the multiplication map } $\widetilde H \times H\to G$ { 
is one to one on} $B^{{\widetilde H}}(\rho) \times {B^H}(\rho),$
\item []
\eq{conjugate implied}
{g_tB^{\widetilde H}(\rho)g_{-t} \subset B^{\widetilde H}(2\rho) \text{ for any } t\ge 0, } 
\item []
and
\eq{nonemp}{\sigma_{2 \rho}O \neq   \varnothing.}
\end{enumerate}
\smallskip
Note that \equ{conjugate implied} can be {satisfied}  since $F$ is $\Ad$-diagonalizable and the restriction of the map $g \to g_tgg_{-t}$,
$t \ge 0$, to $\widetilde H$
is non-expanding. Also \equ{nonemp} can be {achieved, since in view of \equ{inner-core} 
$\sigma_r O$ is   non-empty}  when $r>0$ is sufficiently small.

 Now in view of \equ{nonemp}, we can apply Theorem \ref{dimension drop 3} with $O$ replaced with $\sigma_{2\rho}O$ and conclude that there exists $\vre>0$ such that
\eq{sliceH}{\dim \left(\big\{h\in H: hx\in \widetilde E({F},\sigma_{2 \rho}O)\big\} \right)= \dim H - \vre  < \dim H                  .}
Choose $s>0$ such that ${B
}(s)$ is contained in the product  $B^{{\widetilde H}}(\rho){B^H}(\rho)$, and for  $x\in X$ denote 
\begin{equation*}{{E_{x}}:=}\, \big\{ g \in {B
}(s):gx \in \widetilde {E}({F },O)\big\}.
\end{equation*}
%
In view of the countable stability of Hausdorff dimension, in order to prove the corollary it suffices to prove that for any  $x \in X$,
\begin{equation*}\dim E_{x}  \le  \dim X - \vre,\end{equation*}
where $\vre$ is as in \equ{sliceH}; note that $\widetilde{E}({F }, O)$ can be covered by countably many sets $\{gx : g \in E_{x}\}$, with the maps 
{$\pi_x: E_{x} \to X$}
being Lipschitz and at most finite-to-one. 
Since every ${g\in B(s)}
$ can be written as $g = h'h$, where
$h' \in {B^{\widetilde H}}(\rho)$ and $h \in {B^{H}}(\rho)$, 
for any $y \in X$ we can write
\begin{equation*}\begin{aligned}
{\dist}({g_t}gx,y) &\le {\dist}({g_t}h'hx,{g_t}hx) + {\dist}({g_t}hx,y)\\ &={\dist}\big(g_th'g_{-t}{g_t}hx,{g_t}hx\big)+ {\dist}({g_t}hx,y).\end{aligned}\end{equation*}
Hence  in view of \equ{conjugate implied},
$g \in {E_{x}}$ implies that {$h{x}$ belongs to $E({F },{\sigma
_{{2 \rho}}}U)$}, and by using
Wegmann's Product Theorem \cite{Weg} we have: 
$$
{\begin{aligned}
\dim {E_{x}} 
&\le {\dim} \left(\{ h \in {B^H}(\rho):hx \in E({F },{\sigma
_{2 \rho}}O)\} \times {B^{\widetilde H}(\rho)}
\right)\\
& \le {\dim} \big(\{ h \in {B^H}(\rho):hx \in \widetilde{E}({F },{\sigma
_{2 \rho}}O)\}\big) +  \dim  {\widetilde H } \\
& \underset{\equ{sliceH}}\le \dim H - \vre + \dim \widetilde {H}  = \dim X - \vre .
\end{aligned}
}$$
This ends the proof of the corollary.
\end{proof}


\section{Tessellations and  
{Bowen boxes}
}\label{tess}
{Let} $P$ be a {connected} subgroup of $H$ normalized by $F$. Following \cite{KM1}, 
say that an open subset $V$ of $P$ is a {\sl tessellation domain} 
relative to a countable subset $\Lambda$ of $P$ if 
\begin{itemize}
\item
$\nu (\partial V) = 0$;
\item
$V \gamma_1 \cap V \gamma_2 = \varnothing$ for different $\gamma_1,\gamma_2 \in \Lambda$;
\item
$P = \bigcup\limits_{\gamma  \in \Lambda } {\overline V \gamma }.$
\end{itemize} 

{Note that $P$ is a connected   simply connected nilpotent Lie group. Denote \linebreak $\mathfrak p := \Lie(P)$  and $p:= \dim P$.
As shown in  \cite[Proposition 3.3]{KM1}, one can choose a basis of $\mathfrak p$ such that for any $r  > 0$, 
{$\exp \left(rI_{\mathfrak p}\right)$, where $I_{\mathfrak p} \subset \mathfrak p $ is the cube centered at $0$ with side length $1$ with respect to that basis, is a tessellation domain. Let us denote
\eq{defvr}{V_r :=\exp \left({{\frac{r}{4 \sqrt{p}}}} 
{I_{\mathfrak p}}\right)} 
and choose a countable $\Lambda_r\subset P$   such that $V_r$  is a tessellation domain 
relative  to $\Lambda_r$.} 

Take $0<r_*<1/4$ such that the exponential map from $\mathfrak p$ 
  to $P$ is $2$-bi-Lipschitz on $
 B^{\mathfrak p}(r_*)$. 
The latter implies that
{\eq{Bowen inc}{{B^P}\Big(\frac{r}{{16  \sqrt{p} }}\Big) \subset {V_r} \subset {B^P}\left( \frac r {4}\right)\quad  \text{for any }0<r   \le r_*.}
Also, the measure $\nu$ and the pushforward of the Lebesgue measure $\Leb$ on $\mathfrak p$ are absolutely continuous with respect to each other with locally bounded Radon--Nikodym derivative. This implies that there exists $0< c_1< c_2$ such that 
\eq{lb1}{
c_1\Leb(A) \le \nu\big(\exp(A)\big)\le c_2\Leb(A)\quad \forall\, \text{measurable } A\subset  B^{\mathfrak p}(1).
}

In what follows we will be taking $\theta \ge r$ and approximating ${V_\theta}$ by the union of $\Lambda_r$-translates of ${V_r}$. The following estimate will be helpful:

\begin{lem}\label{coveringtheta} 
 For any  $0<r  \le \theta \le r_*/2$ 
$$
\#\{\gamma\in \Lambda_r: {V_r} \gamma \cap {V_\theta} \neq \varnothing \}\le \frac{c_2}{c_1}\left(\frac \theta r + 8\sqrt p \right)^p.
$$ 
\end{lem}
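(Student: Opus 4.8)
The plan is to bound the cardinality by a ratio of $\nu$-measures. Since $P$ is a connected, simply connected nilpotent Lie group it is unimodular, so its Haar measure $\nu$ is both left- and right-invariant; in particular $\nu(V_r\gamma)=\nu(V_r)$ for every $\gamma\in P$. Because $V_r$ is a tessellation domain relative to $\Lambda_r$, the sets $\{V_r\gamma:\gamma\in\Lambda_r\}$ are pairwise disjoint; hence, writing $W$ for the union of those $V_r\gamma$ that intersect $V_\theta$, we obtain
$$\#\{\gamma\in\Lambda_r:V_r\gamma\cap V_\theta\neq\varnothing\}=\frac{\nu(W)}{\nu(V_r)}.$$
Since $V_r=\exp\bigl(\tfrac{r}{4\sqrt p}I_{\mathfrak p}\bigr)$ and the cube $\tfrac{r}{4\sqrt p}I_{\mathfrak p}$ lies inside $B^{\mathfrak p}(1)$, \equ{lb1} gives $\nu(V_r)\ge c_1\bigl(\tfrac{r}{4\sqrt p}\bigr)^{p}$. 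So it will be enough to prove $\nu(W)\le c_2\bigl(\tfrac{\theta}{4\sqrt p}+2r\bigr)^{p}$, since dividing then yields exactly $\tfrac{c_2}{c_1}\bigl(\tfrac\theta r+8\sqrt p\bigr)^{p}$.

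To bound $\nu(W)$ I would first localize $W$ near $V_\theta$. If $V_r\gamma\cap V_\theta\neq\varnothing$ then $\gamma\in V_r^{-1}V_\theta$, and since the defining cube $\tfrac{r}{4\sqrt p}I_{\mathfrak p}$ is centrally symmetric we have $V_r^{-1}=V_r$; hence $W\subseteq V_rV_rV_\theta$. Transporting this to the Lie algebra through the (global) diffeomorphism $\exp$, the set $\log W$ is contained in the Baker--Campbell--Hausdorff product $\tfrac{r}{4\sqrt p}I_{\mathfrak p}\bullet\tfrac{r}{4\sqrt p}I_{\mathfrak p}\bullet\tfrac{\theta}{4\sqrt p}I_{\mathfrak p}$. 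Its linear (vector-addition) part is the cube $\tfrac{2r+\theta}{4\sqrt p}I_{\mathfrak p}$, and the remaining commutator terms are of strictly lower order in the small quantities $r\le\theta\le r_*/2$; keeping track of them using the bound on the structure constants implicit in the $2$-bi-Lipschitz property of $\exp$ on $B^{\mathfrak p}(r_*)$, one checks that $\log W$ is contained in the cube $\bigl(\tfrac{\theta}{4\sqrt p}+2r\bigr)I_{\mathfrak p}$. The coefficient $8\sqrt p$ in the statement is deliberately generous: it leaves ample room both for these commutator corrections and for the conversions between the Euclidean norm and the sup-norm defining the cubes, which is where the factor $\sqrt p$ enters.

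Once this inclusion is in place, \equ{lb1} gives $\nu(W)\le c_2\,\Leb\bigl((\tfrac{\theta}{4\sqrt p}+2r)I_{\mathfrak p}\bigr)=c_2\bigl(\tfrac{\theta}{4\sqrt p}+2r\bigr)^{p}$, and combining this with the lower bound for $\nu(V_r)$ finishes the proof. The main obstacle is the bookkeeping in the second paragraph, namely verifying that the Baker--Campbell--Hausdorff product of the three cubes does not escape the cube of side $\tfrac{\theta}{4\sqrt p}+2r$; everything else is an immediate consequence of unimodularity, the tessellation property, and the absolute-continuity estimate \equ{lb1}.
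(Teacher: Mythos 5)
Your overall skeleton is the same as the paper's: bound the count by $\nu(W)/\nu(V_r)$ using disjointness of the tessellation translates, trap $W$ inside the exponential of a cube of side $\frac{\theta}{4\sqrt p}+2r$, and convert to Lebesgue measure via \equ{lb1}. The first and third steps are fine. The gap is in the second step, which is the entire content of the lemma and which you do not actually carry out. You reduce it to showing that the Baker--Campbell--Hausdorff product of the three cubes stays inside $\big(\frac{\theta}{4\sqrt p}+2r\big)I_{\mathfrak p}$, and you justify the control of the commutator corrections by ``the bound on the structure constants implicit in the $2$-bi-Lipschitz property of $\exp$.'' That property compares $\|X-Y\|$ with $\dist(\exp X,\exp Y)$; it says nothing about the coordinates of BCH commutator terms, so no structure-constant bound is implicit in it. To close your route you would need a separate estimate of the form $\|[X,Y]\|_\infty\le C\,\|X\|\,\|Y\|$, the observation that the per-coordinate slack between the linear part (half-width $\frac{2r+\theta}{8\sqrt p}$) and the target (half-width $\frac{\theta}{8\sqrt p}+r$) is of order $r$ while the corrections are $O(r\theta)$, and then a check that $\theta\le r_*/2$ is small enough relative to $C$ --- which is not automatic, since $r_*$ was fixed only to make $\exp$ bi-Lipschitz.

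The paper sidesteps all of this by staying metric: by \equ{Bowen inc} and right-invariance of the metric, $\diam(V_r\gamma)=\diam(V_r)\le r/2$, so any translate meeting $V_\theta$ is contained in $\partial_{r/2}V_\theta$; then the $2$-bi-Lipschitz property of $\exp$ (used for what it actually gives) pulls $\partial_{r/2}V_\theta$ back into the Euclidean neighborhood $\partial_r\big(\frac{\theta}{4\sqrt p}I_{\mathfrak p}\big)$, which lies in $B^{\mathfrak p}(r_*)$ and inside the cube of side $\frac{\theta}{4\sqrt p}+2r$. If you replace your second paragraph with this two-line metric argument, the rest of your write-up goes through essentially verbatim.
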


\begin{proof} \ignore{For the lower bound, observe that $\#\{ \gamma \in {\Lambda_r}: V_r \gamma \subset {V_\theta}              \}  \ge 1$ because of the convention $e\in\Lambda_r$. Then note that for any $\gamma$ we have $\diam(V_r \gamma) < r/2$ in view of \equ{Bowen inc} and the right-invariance of the metric.  So if $V_r \gamma$ contains a point of $\sigma_{r/2}{V_\theta}$, then it must lie in the interior of ${V_\theta}$. Therefore,
$$ \begin{aligned}
\#\{ \gamma \in {\Lambda_r}: V_r \gamma \subset V_\theta              \} 
& \ge \frac{\nu (\sigma_{r/2} V_\theta)}{\nu(V_r)} \underset{\equ{lb1}}{\ge} \frac{c_1}{c_2} \cdot   \frac{ \Leb \left( \sigma_r \big(\frac{\theta}{4 \sqrt{p}} {I_{\mathfrak p}} \big) \right)}{ \Leb \left(\frac{r}{4 \sqrt{p}} {I_{\mathfrak p}} \right)}   \\
(\text{if } \frac{\theta}{4 \sqrt{p}} - 2r  \ge 0)\qquad&=\frac{c_1}{c_2} \cdot \frac{\left( \frac{\theta}{4 \sqrt{p}} - 2r   \right)^p}{\left(\frac{r}{4 \sqrt{p}} \right)^p} 
= \frac{c_1}{c_2}   \Big(\frac \theta r - 8\sqrt p\Big)^p, 
\end{aligned}$$
which gives us the lower bound that we were looking for. Note that in the second inequality above we also used the bi-Lipschitz property of $\exp$.  

For the upper bound, n}
Note that if ${V_r} \gamma$ intersects ${V_\theta}$, then in view of \equ{Bowen inc} 
we must have ${V_r} \gamma \subset \partial_{r/2}{V_\theta}$. Hence,
$$
\begin{aligned}
\#\{\gamma\in \Lambda_r: {V_r} \gamma \cap {V_\theta} \neq \varnothing \} 
&\le \frac{\nu \left(\partial_{r/2}{V_\theta} \right)}{\nu \left( {V_r}\right)}\underset{\equ{lb1}}\le \frac{c_2}{c_1} \cdot   \frac{ \Leb \left( \partial_r \big({\frac{\theta}{4 \sqrt{p}} {I_{\mathfrak p}}} \big) \right)}{ \Leb \left({\frac{r}{4 \sqrt{p}} {I_{\mathfrak p}}} \right)} \\
& 
= 
\frac{c_2}{c_1} \cdot \frac{\left( \frac{\theta}{4 \sqrt{p}} + 2r   \right)^p}{\left(\frac{r}{4 \sqrt{p}}  \right)^p} 
 = \frac{c_2}{c_1}\left(\frac \theta r + 8\sqrt p \right)^p,
\end{aligned}$$
where in the second inequality above we were able to use the bi-Lipschitz property of exp 
since $$ \partial_r \big({\frac{\theta}{4 \sqrt{p}} {I_{\mathfrak p}} }\big)\subset {B^{\mathfrak p} \left({\frac{\theta}{{8}}  + r   }\right)} \subset B^{\mathfrak p}(r_*).$$
This finishes the proof.
\end{proof}
\smallskip

{Recall that all eigenvalues of the restriction of $\ad_{{g_1}}$ to $\mathfrak p$ are positive. 
{Using the  bi-Lipschitz property of $\exp$, one can conclude that}}
{\eq{diam}{
\begin{aligned}\diam(g_{-t}{V_r}g_t)&\le  2\cdot \diam\left(\exp {\Big({{{\frac{re^{-\lambda_{\min} t}}{4 \sqrt{p}}}} 
{I_{\mathfrak p}}}\Big)} \right) \\ & \le \frac {re^{-\lambda_{\min} t}}2   \quad\text{for any $0<r \le r_*$ and any }t\ge 0
,
\end{aligned}}}
{where $\lambda_{\min}$ is as in \equ{b1}.}
{Also let 
$\delta := \Tr\ad_{{g_1}}|_{\mathfrak p}$;
clearly one then has} \eq{delta}{{\nu(g_{-t}A g_t) = e^{-\delta t}\nu(A)\text{ for any measurable }A\subset P.}}

{Let us now define} a {\sl Bowen $(t,r)$-box} in $P$ 
{to be} a set of the form $g_{-t}\overline{V_r} \gamma g_t$  for some $\gamma \in P$ {and $t \ge 0$}. 
The following lemma, analogous to {\cite[Proposition 3.4]{KM1} and} \cite[Lemma 6.1]{KMi1}, gives   an upper bound for 
the number of $\gamma  \in \Lambda_r$ such that the Bowen box {$g_{-t}\overline{V_r} \gamma g_t$ 
has non-empty intersection with $\overline{V_r}$:
\begin{lem} 
\label{covering} For any  $0<r  \le r_*/2$ {and 
\eq{bigt}{t\ge\frac{\log (8\sqrt{p})}{\lambda_{\min}} ,}
one has}\[\# \{ \gamma  \in \Lambda_r :{g_{ - t}}{\overline{V_r}\gamma}{g_t}  \cap \overline{V_r} \ne \varnothing \}  \le 
e^{\delta t} \left(1 + {C_0}e^{-\lambda_{\min} t}\right),\]
where \eq{defd0}{{C_0} :=  
 {\frac{ 2^{p+3} {p}^{3/2} c_2}{c_1}}.}
\end{lem}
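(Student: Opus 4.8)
The plan is to exploit the tessellation property of $V_r$ together with the volume-scaling identity \equ{delta} and the diameter estimate \equ{diam}. Since $F$ normalizes $P$, conjugation by $g_t$ is an automorphism of $P$, so for $\gamma\in\Lambda_r$ we have $g_{-t}(V_r\gamma)g_t=(g_{-t}V_rg_t)(g_{-t}\gamma g_t)$, a right-translate of the fixed open set $g_{-t}V_rg_t$; because $V_r$ is a tessellation domain relative to $\Lambda_r$ these translates are pairwise disjoint, and each of them has $\nu$-measure $e^{-\delta t}\nu(V_r)$ by \equ{delta} and the unimodularity of the nilpotent group $P$.

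I would then localize. If $g_{-t}\overline{V_r}\gamma g_t$ meets $\overline{V_r}$, then, since the metric on $P$ is right-invariant, this box has the same diameter as $g_{-t}\overline{V_r}g_t$, namely at most $\rho:=\tfrac12 re^{-\lambda_{\min}t}$ by \equ{diam}; hence the whole box lies within distance $\rho$ of $\overline{V_r}$. Writing $N$ for the number to be estimated and summing the $\nu$-measures of the (disjoint) boxes indexed by such $\gamma$, and using $\nu(\partial V_r)=0$, we obtain
\[ N\,e^{-\delta t}\nu(V_r)\ \le\ \nu\big(\{h\in P:\dist(h,\overline{V_r})\le\rho\}\big)\ =\ \nu(V_r)+\nu(\Sigma), \]
where $\Sigma:=\{h\in P:\dist(h,\overline{V_r})\le\rho\}\ssm\overline{V_r}$ is the shell of width $\le\rho$ around $\overline{V_r}$. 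This gives $N\le e^{\delta t}\big(1+\nu(\Sigma)/\nu(V_r)\big)$, so it remains to show $\nu(\Sigma)/\nu(V_r)\le C_0 e^{-\lambda_{\min}t}$.

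For that I would move to the Lie algebra via $\exp$, which on $B^{\mathfrak p}(r_*)$ is a $2$-bi-Lipschitz homeomorphism whose image contains $B^P(r_*/2)$. Using \equ{Bowen inc} one checks that for $0<r\le r_*/2$ the closed $\rho$-neighborhood of $\overline{V_r}$ lies inside $B^P(r_*/2)$, so $\Sigma$ pulls back into the set of points of $\mathfrak p$ lying at Euclidean distance $\le 2\rho$ from the cube $\tfrac{r}{4\sqrt p}\overline{I_{\mathfrak p}}$ (side $s:=\tfrac{r}{4\sqrt p}$) but outside its interior, the factor $2$ coming from the bi-Lipschitz constant, and this region still lies in $B^{\mathfrak p}(1)$. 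Applying \equ{lb1} to numerator and denominator gives $\nu(\Sigma)/\nu(V_r)\le\tfrac{c_2}{c_1}\big((s+4\rho)^p-s^p\big)/s^p$. Now \equ{bigt} says precisely that $8\sqrt p\,e^{-\lambda_{\min}t}\le1$, equivalently $4\rho\le s$, so by the mean value theorem the last expression is at most $\tfrac{c_2}{c_1}\,p\,2^{p-1}\,(4\rho/s)$; since $\rho/s=2\sqrt p\,e^{-\lambda_{\min}t}$, this equals $\tfrac{c_2}{c_1}\,2^{p+2}p^{3/2}e^{-\lambda_{\min}t}\le C_0 e^{-\lambda_{\min}t}$ with $C_0$ as in \equ{defd0}.

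The computations above are routine; the one step I would treat as the crux is the localization. The point is to use the diameter bound \equ{diam} keeping the genuine $e^{-\lambda_{\min}t}$ contraction rather than the coarse bound $\le r/2$, since it is exactly this contraction that makes the error term $O(e^{-\lambda_{\min}t})$; and one must peel off the shell $\Sigma$ instead of bounding the full $\rho$-neighborhood, so that the leading constant on the right stays equal to $1$ and absorbs the $e^{\delta t}$ factor cleanly.
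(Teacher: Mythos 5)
Your argument is correct and is essentially the paper's proof: both rest on the disjointness of the conjugated tessellation translates, the volume identity $\nu(g_{-t}Ag_t)=e^{-\delta t}\nu(A)$, the diameter bound \equ{diam}, and the same bi-Lipschitz/mean-value-theorem estimate for a thin collar of the cube in $\mathfrak p$. The only (immaterial) difference is bookkeeping: the paper splits the count into boxes contained in $V_r$ plus boxes meeting $\partial V_r$ and estimates a two-sided neighborhood of $\partial V_r$, whereas you pack all boxes into the $\rho$-neighborhood of $\overline{V_r}$ and peel off the outer shell.
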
 

\begin{proof}
Let $0<r \le r_*/2$.
One has:
\[
\begin{aligned}
\# 
&  \{ \gamma  \in \Lambda_r :{g_{ - t}}{\overline{V_r}\gamma}{g_t}  \cap \overline{V_r} \ne \varnothing \} \\
= \#  &\{ \gamma  \in \Lambda_r :{g_{ - t}}{{V_r}\gamma}{g_t}  \subset {V_r}\}  + \# \{ \gamma  \in \Lambda_r :{g_{ - t}}{\overline{V_r}\gamma}{g_t}  \cap \partial {V_r} \ne \varnothing \}. \end{aligned}\]
 Since $V_r$ is a tessellation domain of $P$ relative to $\Lambda_r$, the first term in the above sum is not greater than $\frac{\nu({V_r})}{\nu(g_{-t}{V_r}g_t)} = e^{\delta t}$, {while, in view of 
 \equ{diam}, the second term is not greater than
\eq{2term}{{\frac{{\nu \Big(
\partial_{{\frac {re^{-\lambda_{\min} t}}2}}(\partial {{V_r}})
\Big)
}}{\nu ({g_{ - t}} {V_r}{g_t})}}
\underset{\equ{lb1},\,\equ{delta}}\le c_2e^{\delta t}\ \frac{{\Leb \left(
\partial_{{r    e^{-\lambda_{\min} t}}}\big(\partial ({{\frac{r}{{4} \sqrt{{p}}} {I_{\mathfrak p}}}})\big)
\right)}}
{\nu ( {V_r})}
.}
(Here we used the fact that $${\partial_{r    e^{-\lambda_{\min} t}}\left( {\frac{r}{{4} \sqrt{p}} {I_{\mathfrak p}}} \right)\subset {B^{\mathfrak p}\left(\frac{r}{{8}}  + r    e^{-\lambda_{\min} t}\right)} \subset {B^{\mathfrak p}\left(\frac{9r}8\right)} \subset B^{\mathfrak p}\left(\frac{9r_*}{16}\right)}\subset B^{\mathfrak p}(1),$$
hence we can use the $2$-bi-Lipschitz property of $\exp$ to conclude that $$\exp \left(
\partial_{{r    e^{-\lambda_{\min} t}}}\big(\partial ({{\tfrac{r}{{4} \sqrt{{p}}} {I_{\mathfrak p}}}})\big)
\right)\supset\partial_{{\frac {re^{-\lambda_{\min} t}}2}}(\partial {V_r}),$$ and the estimate \equ{lb1} is applicable.) 
It is easy to see that the numerator in the right hand side of \equ{2term} is not greater that 
{$$
\begin{aligned}
&\qquad  \left(  \frac{r}{4 \sqrt{p}}  + 2r    e^{-\lambda_{\min} t}   \right)^{p}  -   \left(  \frac{r}{2 \sqrt{p}}  - 2r    e^{-\lambda_{\min} t}   \right)^{p} \\
&  \underset{\text{(Mean Value Theorem)}} \le 4r    e^{-\lambda_{\min} t} p  \left(  \frac{r}{4 \sqrt{p}}  + 2r    e^{-\lambda_{\min} t}   \right)^{p-1} \\
& \underset{\equ{bigt} } \le  4pr    e^{-\lambda_{\min} t}   \left(  \frac{r}{2 \sqrt{p}}    \right)^{p-1} 
= 
2^{p+3} {p}^{3/2} \left(  \frac{r}{4 \sqrt{p}}     \right)^p{e^{-\lambda_{\min} t}}
  \underset{\equ{defvr},\,\equ{lb1} }\le \frac{ 2^{p+3} {p}^{3/2} }{c_1}\nu({V_r})e^{-\lambda_{\min} t}
,\qquad\end{aligned}$$} which finishes the proof.}
 \end{proof}
  
{We conclude the section with a  lemma, {which is a slight modification of \cite [Lemma 6.4]{KMi1}}, to be used at the last stage of the proof for switching from coverings by Bowen boxes to coverings by balls.
\begin{lem}
\label{coveringballs} For any {$t>0 $} and any $0<r \le r_*$,  any Bowen $(t,r)$-box in $P$ can be covered with at most  {$ e^{(p \lambda_{\max}- \delta)t}$}
balls 
of radius $r e^{- \lambda_{\max}t}$, {where $\lambda_{\max}$ is as in \equ{b2}}.
\end{lem}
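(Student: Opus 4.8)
The plan is to present a Bowen box as the exponential image of an axis-aligned rectangular box in $\mathfrak p$ which is short precisely in the most expanding directions, and then cover that box by a grid of cubes whose exponential images have the prescribed radius. First I would reduce to the case $\gamma=e$: since $g_{-t}\overline{V_r}\gamma g_t=\big(g_{-t}\overline{V_r}g_t\big)\cdot\big(g_{-t}\gamma g_t\big)$ and right translation by $g_{-t}\gamma g_t\in P$ is an isometry for the right-invariant metric, it suffices to cover $A_0:=g_{-t}\overline{V_r}g_t$. Using $g_{-t}\exp(Z)g_t=\exp\big(\Ad(g_{-t})Z\big)$ together with \equ{defvr} gives $A_0=\exp\!\big(\tfrac{r}{4\sqrt p}\Ad(g_{-t})\overline{I_{\mathfrak p}}\big)$. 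As is already implicit in \equ{diam}, the basis of $\mathfrak p$ defining $I_{\mathfrak p}$ may be taken to be an eigenbasis $(e_1,\dots,e_p)$ of $\ad_{g_1}|_{\mathfrak p}$, say with $\ad_{g_1}e_j=\lambda_j e_j$ and $\lambda_{\min}=\lambda_1\le\dots\le\lambda_p=\lambda_{\max}$: this is legitimate because $\ad_{g_1}|_{\mathfrak p}$ is semisimple with positive eigenvalues and preserves the lower central series of $\mathfrak p$, so one can pick a basis adapted to that series and consisting of eigenvectors, to which the construction of \cite[Proposition~3.3]{KM1} applies. Then $\tfrac{r}{4\sqrt p}\Ad(g_{-t})\overline{I_{\mathfrak p}}$ is the rectangular box whose $j$-th side length is $\tfrac{r}{4\sqrt p}e^{-\lambda_j t}$, and since all $\lambda_j>0$ and $r\le r_*$ this box lies in $B^{\mathfrak p}(r/8)\subset B^{\mathfrak p}(r_*)$, where $\exp$ is $2$-bi-Lipschitz.

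Next I would subdivide this box into sub-boxes each of whose sides has length at most $s:=\dfrac{r\,e^{-\lambda_{\max}t}}{2\sqrt p}$; the number of pieces needed is $\prod_{j=1}^{p}\big\lceil\tfrac{1}{s}\cdot\tfrac{r}{4\sqrt p}e^{-\lambda_j t}\big\rceil=\prod_{j=1}^{p}\big\lceil\tfrac12 e^{(\lambda_{\max}-\lambda_j)t}\big\rceil$. Since $e^{(\lambda_{\max}-\lambda_j)t}\ge 1$ for $t>0$, one has $\big\lceil\tfrac12 e^{(\lambda_{\max}-\lambda_j)t}\big\rceil\le e^{(\lambda_{\max}-\lambda_j)t}$, so this number is at most $\prod_{j=1}^{p}e^{(\lambda_{\max}-\lambda_j)t}=e^{(p\lambda_{\max}-\sum_j\lambda_j)t}=e^{(p\lambda_{\max}-\delta)t}$, where $\delta=\Tr\ad_{g_1}|_{\mathfrak p}$ as in \equ{delta}. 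Each such sub-box has Euclidean diameter at most $s\sqrt p=\tfrac12 re^{-\lambda_{\max}t}$, hence its image under $\exp$ has diameter at most $re^{-\lambda_{\max}t}$ and is contained in a ball of radius $re^{-\lambda_{\max}t}$ in $P$. Applying $\exp$ to this grid and translating back by $g_{-t}\gamma g_t$ yields the desired covering of $g_{-t}\overline{V_r}\gamma g_t$ by at most $e^{(p\lambda_{\max}-\delta)t}$ such balls.

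The computation is routine; the one point that must be handled with care is that the exponent $p\lambda_{\max}-\delta$ is sharp, so one cannot afford to replace $\Ad(g_{-t})\overline{I_{\mathfrak p}}$ by a ball of radius $\asymp e^{-\lambda_{\min}t}$ — that crude diameter/volume estimate would only give the weaker exponent $(\lambda_{\max}-\lambda_{\min})p$. The anisotropy has to be exploited coordinate by coordinate, which is exactly why it is convenient to take $I_{\mathfrak p}$ with respect to an eigenbasis of $\ad_{g_1}|_{\mathfrak p}$. The remaining bookkeeping — that the box and all grid pieces stay inside $B^{\mathfrak p}(r_*)$ so that the $2$-bi-Lipschitz bound applies, together with the elementary ceiling-function estimate — is of the same flavor as in the proofs of \equ{diam} and Lemma~\ref{covering}.
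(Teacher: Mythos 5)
Your proof is correct and follows essentially the same route as the paper's: reduce (via right-invariance and the $2$-bi-Lipschitz property of $\exp$) to covering the anisotropic Euclidean box $\frac{r}{4\sqrt p}\Ad(g_{-t})\overline{I_{\mathfrak p}}$ by cubes of side $\asymp re^{-\lambda_{\max}t}/\sqrt p$, and count these coordinate-by-coordinate so that the side lengths $\frac{r}{4\sqrt p}e^{-\lambda_j t}$ multiply up to $e^{-\delta t}(\frac{r}{4\sqrt p})^p$. The only cosmetic difference is in how the ceiling factors are absorbed (you use $\lceil x/2\rceil\le x$ for $x\ge 1$, the paper bounds the total length of covering intervals by $4\Leb(I_i)$), which yields the same exponent $p\lambda_{\max}-\delta$.
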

\begin{proof}
Using the  $2$-bi-Lipschitz property of $\exp$ again, one can cover $g_{-t}\overline{V_r}g_t$ by at most as many  balls of radius $r e^{- \lambda_{\max}t}$,  as the number of  translates of   $ \frac{re^{- \lambda_{\max}t}} {\sqrt{p}}I_{\mathfrak p}$ needed to cover $\Ad(g_{-t})\left( \frac{r} {4\sqrt{p}}\overline{I_{\mathfrak p}}\right)$. The latter can be written as the direct product of intervals $I_1,\dots,I_p$, where $\min_i\Leb(I_i) =  \frac{re^{- \lambda_{\max}t}} {4\sqrt{p}}$. Clearly each $I_i$ can be covered by the union of intervals of length $ \frac{re^{- \lambda_{\max}t}} {\sqrt{p}}$ whose total measure is at most $4\Leb(I_i)$. Hence $\Ad(g_{-t})\left( \frac{r} {4\sqrt{p}}\overline{I_{\mathfrak p}}\right)$ can be covered by at most $$
\frac{4^p \Leb\left(\Ad(g_{-t})\big( \frac{r} {4\sqrt{p}}{I_{\mathfrak p}}\big)\right)}{\Leb\left( \frac{re^{- \lambda_{\max}t}} {\sqrt{p}}{I_{\mathfrak p}}\right)} = \frac{4^p e^{- \delta t}\Big( \frac{r} {4\sqrt{p}}\Big)^p}{\left( \frac{re^{- \lambda_{\max}t}} {\sqrt{p}}\right)^p} =   e^{(p \lambda_{\max}- \delta)t}$$
 translates of   $ \frac{re^{- \lambda_{\max}t}} {\sqrt{p}}I_{\mathfrak p}$, which finished   the proof of the lemma.
\end{proof}
}


\section{{Property (EEP) and a measure estimate
}}\label{measest}

\ignore{Applying a linear time change to the flow $g_t$, without {loss of}  generality we can assume that 
Define
\gr
{\eq{eigen value}
{\lambda' = 1.}}}
 
 {Our 
 goal in this section is to use property (EEP) of $P$ to find a lower bound for the measure of the sets of the type \eq{ax}{\{h \in
{V_r} :{g_{t}}hx \in O\},} where $x\in X$, $O$ is a  subset of $X$, $r>0$ is small enough, and $t > 0$ is large enough.}  {This step is similar to \cite[Theorem 4.1]{KMi1}, where balls in $P$ were used in place of tessellation domains $V_r$. For our new proof the use of tessellations is crucial; to make the paper self-contained we present a complete argument.}

{We start with the definition of Sobolev spaces.
Let $G$ be a 
Lie
group  
and  $\Gamma$ 
a discrete subgroup of $G$ such that  $X=\ggm$ admits a $G$-invariant measure $\mu_X$.  
{Fix a basis $\{Y_1,\dots,Y_N\}$ 
for the Lie algebra
$\mathfrak g$
of
$G$, and, given 
$h \in C^\infty(X)$, $k\in\N$  and $\ell\in{\Z_+}$, define the ``{\sl $L^{{k}}$, order $\ell$" Sobolev norm} $\|h \|_{\ell{,k}}$ of $h $ by 
 $$
 \|h \|_{\ell{,k}} \df \sum_{|\alpha| \le \ell}\|D^\alpha h \|_{{k}},
 $$ where $\|\cdot\|_{{k}}$  stands for the $L^k$ norm,
 $\alpha = (\alpha_1,\dots,\alpha_N)$ is a multiindex, $|\alpha| = \sum_{i=1}^N\alpha_i$, and $D^\alpha$ is a differential operator of order $|\alpha|$ which is a monomial in  $Y_1,\dots, Y_N$, namely $D^\alpha = Y_1^{\alpha_1}\cdots Y_N^{\alpha_N}$.
This definition depends on the basis,
however, a change of basis
would only  distort
$ \|\cdot \|_{\ell{,k}}$
by a bounded factor. 
We will also use the operators $D^\alpha$ to define $C^\ell$ norms of smooth 
functions $f$ on $X$: 
$$
\|f\|_{C^\ell}  := \sup_{x\in X, \ |\alpha|\le \ell}|D^\alpha f(x)|.
$$}}

{The next} 
lemmas provide a way to approximate subsets of $X$ and $P$ respectively by smooth functions. 
{We start with a basic lemma constructing  test functions supported inside small neighborhoods of identity in $G$. It} is an immediate corollary of \cite[Lemma 2.6]{K}, see also \cite[Lemma 2.4.7(b)]{KM1}.

\begin{lem}\label{KMlem}
For each {$\ell\in \Z_+$} there exists   $ {M_{G,\ell}}\ge 1$ 
with the following property: for any
$0 < {{\vre}} < 1$ there exists a nonnegative smooth function  $\varphi_{{\vre}}$ on $G$ such that
{
\begin{enumerate}
\item the support of $\varphi_{{\vre}}$ is inside $B(\vre)$;
\item $\|\varphi_{{\vre}}\|_1
= 1$; 
\item
$\| \varphi_{{\vre}} \|_{\ell{,1}} \le {M_{G,\ell}} \cdot {{\vre}}^{-\ell}$.
\end{enumerate}}
\end{lem}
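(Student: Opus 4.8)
The plan is to run the standard scaled bump function construction in exponential coordinates and to reduce property (3) to two elementary facts about a fixed neighborhood of the identity: that $\exp\colon\mathfrak g\to G$ is a bi-Lipschitz local diffeomorphism near $0$, and that the invariant differential operators $D^\alpha$ have, in an exponential chart, coefficients that are smooth (hence bounded together with all their derivatives) on that neighborhood.

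First I would fix $c_0>0$ so small that $\exp$ restricted to $B^{\mathfrak g}(c_0)$ is a diffeomorphism onto its image, and, using that $\exp$ is $L$-Lipschitz near $0$ with $\exp 0=e$, shrink $c_0$ further so that $Lc_0\le 1$; then $\exp\big(B^{\mathfrak g}(c_0\vre)\big)\subset B(\vre)$ for every $0<\vre\le 1$. Next fix once and for all a nonnegative $\psi\in C^\infty(\mathfrak g)$, not identically zero, with $\supp\psi\subset B^{\mathfrak g}(c_0)$, and for $0<\vre<1$ put $\psi_\vre(Y):=\psi(Y/\vre)$, so that $\supp\psi_\vre\subset B^{\mathfrak g}(c_0\vre)$. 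Transport $\psi_\vre$ to $G$ by setting $\widetilde\varphi_\vre:=\psi_\vre\circ\exp^{-1}$ on $\exp\big(B^{\mathfrak g}(c_0)\big)$ and $\widetilde\varphi_\vre:=0$ elsewhere; this is a nonnegative, not identically zero, smooth function supported in $B(\vre)$. Finally set $\varphi_\vre:=\widetilde\varphi_\vre/\|\widetilde\varphi_\vre\|_1$, which immediately gives (1) and (2).

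For (3) I would use two facts. First, in the exponential chart the Haar measure of $G$ has the form $J(Y)\,d\Leb(Y)$ with $J$ smooth and bounded between two positive constants on $B^{\mathfrak g}(c_0)$ (normalize so that $J(0)=1$); hence $\|\widetilde\varphi_\vre\|_1\asymp\int_{\mathfrak g}\psi_\vre\,d\Leb=\vre^N\int_{\mathfrak g}\psi\,d\Leb\asymp\vre^N$, where $N=\dim G$. Second, each $Y_i$ is, in this chart, a vector field $\sum_j a_{ij}\partial_j$ with $a_{ij}\in C^\infty$, so for a monomial $D^\alpha$ of order $\ell'\le\ell$ the function $D^\alpha\widetilde\varphi_\vre$ is, on the chart, a finite linear combination of partial derivatives $\partial^\beta\psi_\vre$ with $|\beta|\le\ell'$ and coefficients bounded on $B^{\mathfrak g}(c_0)$; since $\partial^\beta\psi_\vre(Y)=\vre^{-|\beta|}(\partial^\beta\psi)(Y/\vre)$, integrating and substituting $Y\mapsto\vre Y$ yields $\|D^\alpha\widetilde\varphi_\vre\|_1\ll\vre^{-\ell'}\vre^N\ll\vre^{N-\ell}$ for $0<\vre<1$, with implied constant depending only on $G$, the chosen basis, $\psi$ and $\ell$. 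Dividing by $\|\widetilde\varphi_\vre\|_1\asymp\vre^N$ and summing over the finitely many $\alpha$ with $|\alpha|\le\ell$ gives $\|\varphi_\vre\|_{\ell,1}\le M_{G,\ell}\vre^{-\ell}$; enlarging $M_{G,\ell}$ if needed we may assume $M_{G,\ell}\ge 1$.

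There is no genuine obstacle here — this is the usual mollifier estimate — and the only points needing care are the two comparisons above on the fixed ball $B^{\mathfrak g}(c_0)$. The reason the bound in (3) is uniform in $\vre$ is precisely that, because $\vre<1$, all these comparisons take place on the single, $\vre$-independent set $B^{\mathfrak g}(c_0)$, so no constant is allowed to depend on $\vre$. Alternatively, one may simply quote \cite[Lemma 2.6]{K} (or \cite[Lemma 2.4.7(b)]{KM1}), where a function with exactly these three properties is produced.
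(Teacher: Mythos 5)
Your construction is correct and is exactly the standard mollifier argument that underlies the cited sources; the paper itself offers no proof, stating only that the lemma is an immediate corollary of \cite[Lemma 2.6]{K} (see also \cite[Lemma 2.4.7(b)]{KM1}). Your write-up supplies precisely the details those references contain — scaled bump in exponential coordinates, comparison of Haar measure with Lebesgue measure on a fixed $\vre$-independent ball, and boundedness of the chart coefficients of the $D^\alpha$ — so there is nothing to add.
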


The {next lemma} is a slightly easier version of \cite[Lemma 5.2]{KMi1}; we provide the proof for the sake of completeness.

\begin{lem}
\label{estimate} 
{For any $\ell \in \Z_+$ there exist a constant
$M_\ell> 0$ (depending only on   $\ell$ {and $G$})} such that for any nonempty open subset $O$ of $X$ and 
any $0<\varepsilon <1$ 
one can find {a nonnegative function $\psi_\varepsilon \in C
^\infty (X)$} such that 
\begin{enumerate}
\item $ {1_{\sigma_\varepsilon O}}\le {\psi _\varepsilon } \le {1_O}$;
 \item $\max\big(
{\left\| {{\psi _\varepsilon }} \right\|_{\ell{,2}} } , {\left\| {{\psi _\varepsilon }} \right\|_{C^\ell} }\big)\le 
{M_\ell }{\varepsilon ^{ - \ell}}$.
\end{enumerate}
\end{lem}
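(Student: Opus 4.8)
The plan is to construct $\psi_\varepsilon$ by convolving the indicator of an intermediate set with a mollifier coming from Lemma \ref{KMlem}. Concretely, given $O$ and $0 < \varepsilon < 1$, set $\delta := \varepsilon/2$ (or some fixed fraction of $\varepsilon$ that makes the neighborhood inclusions below work), take $\varphi_\delta$ the test function on $G$ produced by Lemma \ref{KMlem} with parameter $\delta$ and order $\ell$, and define
$$
\psi_\varepsilon(x) := \int_G \varphi_\delta(g)\, 1_{\sigma_{\varepsilon/2}O}(g^{-1}x)\, d\nu_G(g),
$$
where $\nu_G$ is the (left) Haar measure on $G$ used in Lemma \ref{KMlem}; this is a smooth nonnegative function on $X$ since $\varphi_\delta \in C^\infty_c(G)$ and the integrand depends smoothly on $x$ (differentiation under the integral sign moves derivatives onto $\varphi_\delta$, or onto the $X$-variable via right-invariant fields, depending on which convention one fixes — I would set this up using the standard identification of left-convolution on $G/\Gamma$).

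For part (1), the point is a pair of neighborhood inclusions. First, $\supp \varphi_\delta \subset B(\delta)$ together with the triangle inequality for $\dist$ gives: if $x \in \sigma_\varepsilon O$, then for every $g \in B(\delta)$ the point $g^{-1}x$ still lies in $\sigma_{\varepsilon/2}O$ (since moving by at most $\delta = \varepsilon/2$ cannot push us out of the $\varepsilon/2$-core), so $\psi_\varepsilon(x) = \int_G \varphi_\delta = 1$, using $\|\varphi_\delta\|_1 = 1$. Hence $1_{\sigma_\varepsilon O} \le \psi_\varepsilon$. Second, if $g^{-1}x \in \sigma_{\varepsilon/2}O \subset O$ for some $g \in B(\delta)$, then $x \in B(\delta)\cdot O \subset O$ provided $O$ is enlarged appropriately — here one has to be slightly careful: $\sigma_{\varepsilon/2}O$ is at distance $> \varepsilon/2$ from $O^c$, so its $\delta$-neighborhood is still contained in $O$, which gives $\psi_\varepsilon(x) > 0 \implies x \in O$, i.e. $\psi_\varepsilon \le 1_O$. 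Combined with $\psi_\varepsilon \le \|\varphi_\delta\|_1 = 1$ pointwise, this yields $\psi_\varepsilon \le 1_O$ as desired.

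For part (2), I would bound both norms by differentiating the convolution. A derivative $D^\alpha$ with $|\alpha| \le \ell$, when moved onto the mollifier, produces $\int_G (D^\alpha\varphi_\delta)(g)\,1_{\sigma_{\varepsilon/2}O}(g^{-1}x)\,d\nu_G(g)$, whose sup over $x$ is at most $\|\varphi_\delta\|_{\ell,1} \le M_{G,\ell}\,\delta^{-\ell} \le M_{G,\ell}\,2^\ell\varepsilon^{-\ell}$; this controls $\|\psi_\varepsilon\|_{C^\ell}$. For the $L^2$, order $\ell$ norm, note $\|D^\alpha\psi_\varepsilon\|_2 \le \|D^\alpha\psi_\varepsilon\|_\infty^{1/2}\,\|D^\alpha\psi_\varepsilon\|_1^{1/2}$ is one route, but cleaner is $\|D^\alpha\psi_\varepsilon\|_2 = \|(D^\alpha\varphi_\delta) * 1_{\sigma_{\varepsilon/2}O}\|_2 \le \|D^\alpha\varphi_\delta\|_1\,\|1_{\sigma_{\varepsilon/2}O}\|_2 \le \|D^\alpha\varphi_\delta\|_1$ by Young's inequality together with $\mu(X) = 1$ (the measure is a probability measure), so $\|\psi_\varepsilon\|_{\ell,2} \le \|\varphi_\delta\|_{\ell,1} \le M_{G,\ell}\,2^\ell\varepsilon^{-\ell}$. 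Taking $M_\ell := 2^\ell M_{G,\ell}$ (which depends only on $\ell$ and $G$) handles both bounds simultaneously. The only genuinely delicate point is keeping the neighborhood/core inclusions in part (1) consistent with left versus right convolution and with the convention in \equ{reg}/\equ{inner-core}; once the bookkeeping of which side $g$ acts on is fixed, everything else is routine, so I expect the main (minor) obstacle to be just setting up the convolution on $G/\Gamma$ cleanly rather than any real difficulty.
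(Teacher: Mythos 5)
Your proposal is correct and follows essentially the same route as the paper: mollify $1_{\sigma_{\varepsilon/2}O}$ by convolution with $\varphi_{\varepsilon/2}$ from Lemma \ref{KMlem}, deduce the sandwich $1_{\sigma_\varepsilon O}\le\psi_\varepsilon\le 1_O$ from the two neighborhood inclusions, pass derivatives onto the mollifier for the $C^\ell$ bound, and apply Young's inequality with $\|1_{\sigma_{\varepsilon/2}O}\|_2\le 1$ for the Sobolev bound. You even arrive at the same constant $M_\ell = 2^\ell M_{G,\ell}$; the $g$ versus $g^{-1}$ convolution convention you flagged is indeed immaterial, as the paper simply writes $gx$ inside the integral.
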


{\begin{proof}
Let $O$ be a nonempty open subset of $X$, and let {$0< \varepsilon <1$}.
{Now 
take ${\psi _\varepsilon } =
\varphi_{{\vre/2}}*{1_{\sigma _{{\varepsilon/2}}O }}$, where $\varphi_{{\vre/2}}$ is as in {Lemma \ref{KMlem}}. 
It follows from the definition of $\psi_\varepsilon$ and the normalization $\|\varphi_{{\vre}}\|_1
= 1$ that $\psi_\varepsilon(x) \le 1$ for all $x$. Also,  {since} $\varphi_{{\vre/2}}$ is supported on 
$B (\varepsilon/2)$, the support of the function
$\psi_\varepsilon$ is contained in $\partial_{\varepsilon/2}\sigma _{{\varepsilon/2}}O \subset
O$,} which implies ${\psi _\varepsilon } \le {1_O}$.
Furthermore, if $x\in \sigma_\varepsilon O$ and $g\in B(\varepsilon/2)$, then $gx\in \partial_{\varepsilon/2}\sigma_\varepsilon O \subset \sigma _{{\varepsilon/2}}O$, i.e.\ $1_{\sigma _{{\varepsilon/2}}O }(gx) = 1$. Therefore 
$$
{\psi _\varepsilon }(x) = \int_G
\varphi_{{\vre/2}}(g){1_{\sigma _{{\varepsilon/2}}O }}(gx)\,d {\mu_G} = \int_G
\varphi_{{\vre/2}}(g)\,{d \mu_G} = 1.$$
So property $(1)$ holds. 

{Let $\alpha= (\alpha_1,\dots,\alpha_N) $ {be} such that $\left| \alpha  \right| \le \ell$. For any $x \in X$ we have 
}
{\begin{equation*}{\begin{aligned}
 \left| {{D^\alpha }{\psi _\varepsilon }(x)} \right| &
 = \left| {{D^\alpha }( \varphi_{\vre/2}  * 1_{\sigma _{{\varepsilon/2}}O })(x)} \right| =  \left| {{D^\alpha }\varphi_{\vre/2}  * 1_{\sigma _{{\varepsilon/2}}O }(x)} \right| 
 \\
  &
   \le {\left\| D^ \alpha{{\varphi_{{\vre/2}}}} \right\|}_1  \le {\left\| {{\varphi_{{\vre/2}}}} \right\|}_{\ell,1} \le M_{G,\ell}  {(\tfrac\varepsilon{2}) ^{ - \ell}},
\end{aligned} }
\end{equation*}
where $M_{G,\ell}$ is as in Lemma \ref{KMlem}.
{Likewise},  
by Young's inequality, 
$${{\left\| D^ \alpha{{\psi _\varepsilon }} \right\|}_2 \le \|{D^\alpha }\varphi_{\vre/2}  * 1_{\sigma _{{\varepsilon/2}}O } \|_2 \le {\left\| D^ \alpha{{\varphi_{{\vre/2}}}} \right\|}_1 \cdot {\left\| 1_{\sigma _{{\varepsilon/2}}O }\right\|}_2 \le  {\left\| D^ \alpha{{\varphi_{{\vre/2}}}} \right\|}_1 \le M_{G,\ell}  {(\tfrac\varepsilon{{2}}) ^{ - \ell}},}
$$
which implies (2) 
with $M_\ell =  {2}^\ell M_{G,\ell}$.}
\end{proof}

The next lemma is a modification of \cite[Lemma 5.3]{KMi1} where we replace balls of radius $r$ in $P$ with $V_r$; we omit the proof.

\begin{lem}
\label{ball estimate}
{For any $\ell \in \Z_+$ there exist constants
$M'_\ell\ge 1$  (depending only on $\ell$ and $P$)}
such that the following holds:
for any $0< \varepsilon,r  \le r_*/2$, 
 there exist functions ${f_\varepsilon }:P \to [0,1]$ such
that
\begin{enumerate}
\item ${f_\varepsilon } = 1\,\,on\,\,\,V_r$;
\item ${f_\varepsilon } = 0\,\,on\,\,\,{\big( V_{r+\vre}\big)^c}$;
{\item $\max\big({\left\| {{f_\varepsilon }} \right\|_{\ell{,2}}} \,, {\left\| {{f_\varepsilon }} \right\|_{C^\ell}}\big) \le {M'_{\ell}}{\varepsilon ^{ - 
\ell}}$. }
\end{enumerate}
\end{lem}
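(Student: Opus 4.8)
The final statement is Lemma~\ref{ball estimate}, which constructs smooth bump functions $f_\varepsilon$ on $P$ that equal $1$ on $V_r$, vanish off $V_{r+\varepsilon}$, and have controlled Sobolev and $C^\ell$ norms. The plan is to mimic the proof of Lemma~\ref{estimate}: construct $f_\varepsilon$ as a convolution on $P$ of an indicator function with a mollifier supported near the identity, and then bound the derivatives using Young's inequality exactly as in that proof.

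\smallskip
\textbf{Step 1: a mollifier on $P$.} First I would invoke the analogue of Lemma~\ref{KMlem} for the nilpotent Lie group $P$ in place of $G$: for each $\ell\in\Z_+$ there is $M_{P,\ell}\ge 1$ such that for every $0<\varepsilon<1$ there is a nonnegative $\phi_\varepsilon\in C^\infty(P)$ with $\supp\phi_\varepsilon\subset B^P(\varepsilon)$, $\|\phi_\varepsilon\|_1=1$, and $\|\phi_\varepsilon\|_{\ell,1}\le M_{P,\ell}\,\varepsilon^{-\ell}$. This is the same statement as Lemma~\ref{KMlem} with $G$ replaced by $P$ (it is again a corollary of \cite[Lemma 2.6]{K}), so I may quote it.

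\smallskip
\textbf{Step 2: choose the set to mollify and convolve.} I would take an intermediate ``radius'' so that the $\varepsilon/2$-neighborhood of the inner set still contains $V_r$ and the $\varepsilon/2$-neighborhood of the outer set is still inside $V_{r+\varepsilon}$. Concretely, since $V_r=\exp\!\big(\tfrac{r}{4\sqrt p}I_{\mathfrak p}\big)$ is a cube-image and $\exp$ is $2$-bi-Lipschitz on $B^{\mathfrak p}(r_*)$, for $0<\varepsilon,r\le r_*/2$ one has the nesting $\partial_{\varepsilon/2}V_r\subset V_{r+\varepsilon}$ and $\sigma_{\varepsilon/2}V_{r+\varepsilon/2}\supset V_r$ after adjusting constants; the cleanest choice is to set $W:=V_{r+\varepsilon/2}$, note $\partial_{\varepsilon/2}W\subset V_{r+\varepsilon}$ (right-invariance of the metric plus \equ{Bowen inc}) and $\sigma_{\varepsilon/2}W\supset V_r$, and then put
$$
f_\varepsilon := \phi_{\varepsilon/2}*1_{\sigma_{\varepsilon/2}W},
$$
the convolution taken over $P$ with respect to $\nu$. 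As in the proof of Lemma~\ref{estimate}: $\|\phi_{\varepsilon/2}\|_1=1$ forces $0\le f_\varepsilon\le 1$; since $\phi_{\varepsilon/2}$ is supported in $B^P(\varepsilon/2)$, the support of $f_\varepsilon$ lies in $\partial_{\varepsilon/2}\sigma_{\varepsilon/2}W\subset W\subset V_{r+\varepsilon}$, giving (2); and for $h\in V_r\subset\sigma_{\varepsilon/2}W$ every point $gh$ with $g\in B^P(\varepsilon/2)$ lies in $W$, so $f_\varepsilon(h)=\int_P\phi_{\varepsilon/2}\,d\nu=1$, giving (1).

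\smallskip
\textbf{Step 3: norm bounds.} For a monomial differential operator $D^\alpha$ with $|\alpha|\le\ell$, differentiation passes onto the mollifier, $D^\alpha f_\varepsilon = (D^\alpha\phi_{\varepsilon/2})*1_{\sigma_{\varepsilon/2}W}$, and then $\|D^\alpha f_\varepsilon\|_\infty\le\|D^\alpha\phi_{\varepsilon/2}\|_1\le\|\phi_{\varepsilon/2}\|_{\ell,1}\le M_{P,\ell}(\varepsilon/2)^{-\ell}$, while Young's inequality gives $\|D^\alpha f_\varepsilon\|_2\le\|D^\alpha\phi_{\varepsilon/2}\|_1\cdot\|1_{\sigma_{\varepsilon/2}W}\|_2\le M_{P,\ell}(\varepsilon/2)^{-\ell}$, using that $\nu(\sigma_{\varepsilon/2}W)\le\nu(V_{r+\varepsilon/2})\le 1$ by the normalization $\nu(B^P(1))=1$ together with \equ{Bowen inc} (for $r+\varepsilon/2\le r_*$; the mild range restriction $0<\varepsilon,r\le r_*/2$ ensures this). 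Summing over the finitely many $\alpha$ with $|\alpha|\le\ell$ yields (3) with $M'_\ell = 2^\ell\,(\#\{\alpha:|\alpha|\le\ell\})\,M_{P,\ell}$, which depends only on $\ell$ and $P$.

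\smallskip
The argument is entirely routine; the only point requiring a little care is the geometric bookkeeping in Step~2, i.e.\ verifying the nesting inclusions $\sigma_{\varepsilon/2}V_{r+\varepsilon/2}\supset V_r$ and $\partial_{\varepsilon/2}V_{r+\varepsilon/2}\subset V_{r+\varepsilon}$ from the definition \equ{defvr} of $V_r$ and the $2$-bi-Lipschitz property of $\exp$ on $B^{\mathfrak p}(r_*)$, together with right-invariance of the metric on $P$. This is exactly the place where passing from balls (as in \cite[Lemma 5.3]{KMi1}) to the tessellation domains $V_r$ costs something, but it is elementary, which is why the paper omits the proof; I would write out just this nesting and cite the convolution estimates from the proof of Lemma~\ref{estimate} verbatim.
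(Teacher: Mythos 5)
Your approach (construct $f_\varepsilon$ by convolving an indicator with a mollifier on $P$, then bound derivatives via Young's inequality) is the right one — it is exactly the argument underlying the paper's Lemma \ref{estimate}, and presumably also the omitted proof of \cite[Lemma 5.3]{KMi1} that the paper says it modifies. The use of a $P$-analogue of Lemma \ref{KMlem} and the $L^1$--$L^2$ Young step are both fine.

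However, the geometric bookkeeping in Step 2 contains a genuine error. The inclusion $\partial_{\varepsilon/2}V_r \subset V_{r+\varepsilon}$ you assert does not hold with that choice of constants. Recall from \equ{defvr} that $V_r = \exp\big(\tfrac{r}{4\sqrt p}I_{\mathfrak p}\big)$, so by \equ{Bowen inc} the ``metric radius'' of $V_r$ is between $\tfrac{r}{16\sqrt p}$ and $\tfrac{r}{4}$, and passing from $V_r$ to $V_{r+\varepsilon}$ increases the radius only by $O(\varepsilon/\sqrt p)$, not by $\varepsilon$. Concretely, the Euclidean gap between the cube $\tfrac{r}{4\sqrt p}I_{\mathfrak p}$ and the complement of $\tfrac{r+\varepsilon}{4\sqrt p}I_{\mathfrak p}$ is $\tfrac{\varepsilon}{8\sqrt p}$; with the $2$-bi-Lipschitz property of $\exp$ this gives $\dist\big(V_r,(V_{r+\varepsilon})^c\big) \ge \tfrac{\varepsilon}{16\sqrt p}$, but $\partial_{\varepsilon/2}V_r$ already sticks out by $\varepsilon/2$, which is much larger. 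So a mollifier supported in $B^P(\varepsilon/2)$ leaks outside $V_{r+\varepsilon}$.

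The fix is immediate and does not change the statement: use a mollifier supported in $B^P(\varepsilon')$ with $\varepsilon' \le \tfrac{\varepsilon}{32\sqrt p}$ (so that there is room for both the $V_r$--to--$V_{r+\varepsilon/2}$ step and the $V_{r+\varepsilon/2}$--to--$V_{r+\varepsilon}$ step). You then get $\|\phi_{\varepsilon'}\|_{\ell,1} \le M_{P,\ell}\,(32\sqrt p)^\ell\,\varepsilon^{-\ell}$, and since $p = \dim P$ is fixed this is absorbed into $M'_\ell$, which is still allowed to depend on $\ell$ and $P$. Everything else in your argument (the sign of $f_\varepsilon$, $\|\phi\|_1=1$ forcing $f_\varepsilon\le 1$, the support inclusion giving (2), the constancy $f_\varepsilon\equiv 1$ on $V_r$ giving (1), $\nu(V_{r+\varepsilon/2})\le\nu(B^P(1))=1$ for the $L^2$ bound, and Young's inequality for (3)) goes through unchanged. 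You correctly flagged the nesting as the one place needing care; you just did not carry the factor of $\sqrt p$ through, and the bound stated with $\varepsilon/2$ is false as written.
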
 
 
\ignore{For any {$x\in X$ denote by $\pi_x$ the map  $G\to X$ given by $\pi_x(g) = gx$, and by  $r_0(x)$ the \textsl{injectivity radius} of $x$, defined as $$
\sup\{r > 0: 
\pi_x\text{ is injective on }B(r)\}.$$
If $K\subset X$ is bounded, let us denote by $r_0(K)$ the \textsl{injectivity radius} of $K$: $$
r_0(K) := \inf_{x\in K}r_0(x) = \sup\{r > 0: 
\pi_x\text{ is injective on }B(r)\  \ \forall\,x\in K\}.$$}}

{Here is the main result of the section, {which is a modified and improved version of \cite[Proposition 5.1]{KMi1}}.}
\begin{prop} \label{exponential mixing} {Let  $P$ be a subgroup of $G$ with property {\rm (EEP)} {with respect to the flow $(X,F)$}. Then 
for any open $O \subset X$,  
any $x\in X$,  
any 
\eq{rlimit}{0<r<\frac{1}{2} \min \big(r_0(x), r_*\big),} 
and any $t$ satisfying 
\eq{t estimate} {t \ge a'+ b \log \frac{1}{r_0(x)} }
one has}
$${ {\mathop  \nu \big(
{\{h \in
{V_r} :{g_{t}}hx \in O\}}\big)\ge \nu\left({V_r}\right)\mu ({\sigma _{e^{ - \lambda 't}}}O)  - {e^{ - {\lambda 't}}}.}}$$
Here
\eq{exponent}{\lambda' := \frac{\lambda}{4 \ell +2} }
and
\eq{a'}{a':=\max \left(a,\frac{1}{\lambda '} \log \left({ M_{\ell} {M'_{\ell}} {{E}}} + {p} {c_2} \right),\frac{\log \frac{2}{r_*}}{2 \lambda '} \right),}
where   
{$\ell,\lambda,a,b$} are as in Definition \ref{subgroup}, {$E$ is an implicit constant from \equ{eep}}, $c_2$ is as in \equ{lb1}, {and} $M_\ell , M'_\ell$ are as in Lemmas   \ref{estimate} {and \ref{ball estimate}}. 

\end{prop}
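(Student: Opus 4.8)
The plan is to deduce the estimate from property (EEP) by squeezing the indicator of the set $\{h\in V_r:g_thx\in O\}$ between smooth test functions, following the scheme of \cite[Proposition 5.1]{KMi1} but with the tessellation domains $V_r$ in place of metric balls in $P$. Throughout put $\varepsilon := e^{-2\lambda' t}$, chosen so that $\varepsilon^{-2\ell}e^{-\lambda t} = e^{(4\ell\lambda'-\lambda)t} = e^{-2\lambda' t}$ by the definition \equ{exponent} of $\lambda'$. The three terms in the maximum defining $a'$ in \equ{a'} guarantee, respectively, that the lower bound \equ{conditionont} on $t$ holds (so that (EEP) applies to any $f$ supported in $B^P(1)$ and any $\psi\in C^\infty_2(X)$), that $e^{\lambda' t}\ge M_\ell M'_\ell E + pc_2$, and that $\varepsilon\le r_*/2$; here we also use that the hypothesis $t\ge a'+b\log\frac1{r_0(x)}$ forces $t\ge a'$.

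First I would dispose of the degenerate range $r\le\varepsilon$: then by \equ{lb1} and $r<1$ one has $\nu(V_r)\le c_2\big(\tfrac{r}{4\sqrt p}\big)^{p}\le c_2 r\le c_2\varepsilon = c_2 e^{-2\lambda' t}\le e^{-\lambda' t}$ (last step since $e^{\lambda' t}\ge c_2$), so $\nu(V_r)\mu(\sigma_{e^{-\lambda' t}}O)-e^{-\lambda' t}\le 0$ and the asserted inequality holds trivially. So assume from now on that $\varepsilon<r$.

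Now apply Lemma \ref{estimate} to get $\psi_\varepsilon\in C^\infty_2(X)$ with $1_{\sigma_\varepsilon O}\le\psi_\varepsilon\le 1_O$ and all norms of $\psi_\varepsilon$ entering \equ{eep} bounded by $M_\ell\varepsilon^{-\ell}$, and Lemma \ref{ball estimate} with $r$ replaced by $r-\varepsilon$ (legitimate since $0<\varepsilon\le r_*/2$ and $0<r-\varepsilon<r_*/2$) to get $f:P\to[0,1]$ with $1_{V_{r-\varepsilon}}\le f\le 1_{V_r}$ and $\|f\|_{C^\ell}\le M'_\ell\varepsilon^{-\ell}$. Since $\supp f\subset V_r\subset B^P(1)$, property (EEP) applies to $f$ and $\psi_\varepsilon$; combined with $0\le\psi_\varepsilon\le 1_O$ and $0\le f\le 1_{V_r}$ it gives
\[\nu\big(\{h\in V_r:g_t hx\in O\}\big)\ \ge\ \int_{V_r}\psi_\varepsilon(g_t hx)\,d\nu(h)\ \ge\ (I_{f,t}\psi_\varepsilon)(x)\ \ge\ \int_P f\,d\nu\int_X\psi_\varepsilon\,d\mu\ -\ E M_\ell M'_\ell\,\varepsilon^{-2\ell}e^{-\lambda t}.\]
Estimating $\int_X\psi_\varepsilon\,d\mu\ge\mu(\sigma_\varepsilon O)\ge\mu(\sigma_{e^{-\lambda' t}}O)$ (as $\varepsilon\le e^{-\lambda' t}$) and $\int_P f\,d\nu\ge\nu(V_{r-\varepsilon})\ge\nu(V_r)-pc_2\varepsilon$ --- the latter from $V_r\setminus V_{r-\varepsilon}=\exp\!\big(\tfrac{r}{4\sqrt p}I_{\mathfrak p}\setminus\tfrac{r-\varepsilon}{4\sqrt p}I_{\mathfrak p}\big)$, \equ{lb1}, the Mean Value Theorem, and $r<1$ --- and plugging in $\varepsilon^{-2\ell}e^{-\lambda t}=e^{-2\lambda' t}$, we obtain
\[\nu\big(\{h\in V_r:g_t hx\in O\}\big)\ \ge\ \nu(V_r)\mu(\sigma_{e^{-\lambda' t}}O)\ -\ \big(pc_2 + E M_\ell M'_\ell\big)e^{-2\lambda' t}\ \ge\ \nu(V_r)\mu(\sigma_{e^{-\lambda' t}}O)\ -\ e^{-\lambda' t},\]
the last step because $e^{\lambda' t}\ge M_\ell M'_\ell E + pc_2$.

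The computation is routine once the test functions are chosen; the delicate point --- and the origin of the specific values \equ{exponent} and \equ{a'} --- is calibrating the smoothing scale $\varepsilon$ against the mixing rate $e^{-\lambda t}$: one pays a factor $\varepsilon^{-\ell}$ for the Sobolev and $C^\ell$ norms of each of $\psi_\varepsilon$ and $f$, and the residual error $\varepsilon^{-2\ell}e^{-\lambda t}$ must still beat $e^{-\lambda' t}$ after one also absorbs the losses $\mu(\sigma_\varepsilon O)\le\mu(\sigma_{e^{-\lambda' t}}O)$ from shrinking the core of $O$ and $\nu(V_r)-\nu(V_{r-\varepsilon})$ from shrinking the tessellation domain. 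The only genuine branch in the proof is the separate, trivial treatment of the range $r\le\varepsilon$, where the inner bump $f$ would degenerate.
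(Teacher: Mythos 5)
Your proof is correct and essentially matches the paper's own: both smooth the indicators via Lemmas \ref{estimate} and \ref{ball estimate} at scale $\varepsilon = e^{-2\lambda't}$ and apply (EEP), converting the $\varepsilon^{-2\ell}e^{-\lambda t}$ loss and the two geometric losses into the single error $e^{-\lambda' t}$ exactly as you describe. The only variation is that you shrink $V_r$ to $V_{r-\varepsilon}$ using an inner bump $f\le 1_{V_r}$ (which forces the separate trivial treatment of the range $r\le\varepsilon$), whereas the paper enlarges it to $V_{r+\varepsilon}$ using an outer bump $f_\varepsilon\ge 1_{V_r}$ and then corrects by $\int|f_\varepsilon-1_{V_r}|\,d\nu\le\nu(V_{r+\varepsilon}\setminus V_r)$, thereby avoiding the degenerate case.
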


\begin{proof}[Proof of Proposition \ref{exponential mixing}] 
{Let  $O \subset X$ be an open subset of $X$, and take $x \in X$ and 
$r$ as in \equ{rlimit}.}
{Now} 
set $f = {1_{{V_r}}}$, 
 {take  $t$  as in \equ{t estimate} and put $\vre:=e^{-2 \lambda' t}.$ 
}
{Note that \equ{t estimate} and  
\equ{a'}
give  
\eq{llimit}{\vre \le \frac{r_*}{2} .}
{Now} let ${\psi
_\varepsilon}$ and ${f _\varepsilon}$ be the functions constructed in Lemmas \ref{estimate} 
and 
 \ref{ball
estimate} respectively.
Then we have 
\eq{prime}{
\begin{aligned}
\max (\|\psi_{{\varepsilon}}\|_{C^1},{\left\| \psi_{{\varepsilon}}  \right\|_{\ell{,2}} }) \cdot {\left\| f_{{\varepsilon}} \right\|_{{C^\ell }}} \cdot {e^{ - \lambda t}} 
& \le \max (\|\psi_{{\varepsilon}}\|_{C^\ell},{\left\| \psi_{{\varepsilon}}  \right\|_{\ell{,2}} }) \cdot {\left\| f _{{\varepsilon}}\right\|_{{C^\ell }}} \cdot {e^{ - \lambda t}} \\ 
& \le  
M_{\ell} {\varepsilon ^{ - \ell }}{M'_{\ell}}{\varepsilon ^{ - \ell }} e^{-\lambda t} \\
&  
=  
M_{\ell} {M'_{\ell}} {e^{4 \ell \lambda 't-\lambda t}} 
\underset{\equ{exponent}} \le   
 M_{\ell} {M'_{\ell}} {e^{-2 \lambda ' t}} .
\end{aligned}}
{Furthermore}, by \equ{Bowen inc} and {\equ{llimit}},
\eq{supinc}{\supp f_{\vre} \subset {V_{r+{e^{-2 \lambda ' t }}}} \subset V_{r + \frac{r_*}{2} } \subset V_{r_*} \subset B^P(1).}
Also, in view of 
\equ{a'} {we have $a' \ge a$; hence, {inequality} \equ{conditionont} is satisfied for any $x  ,t$ satisfying \equ{t estimate}.}
Hence the estimate \equ{eep} can be applied to ${\psi
_\varepsilon}$, ${f _\varepsilon}$, $x$ and $t$, and, in view of \equ{prime}, yields}
$${\begin{aligned}
\int_P {{f_\varepsilon }({h})} \psi_\varepsilon ({g_{t}}{h}x)\,d\nu ({h}) 
&\ge \int_P {{f_\varepsilon }\,d} \nu \int_X {{\psi _\varepsilon }{\mkern 1mu} \,d\mu }  - {
M_{\ell} {M'_{\ell}} {{E}}}{e^{ -2 \lambda ' t}}.
 \end{aligned}}$$
{Thus} we have:
$${\begin{aligned}
& \nu \left({\{h \in
{V_r} :{g_{t}}hx \in O\}} \right) 
= \int_P {f({h}){1_{ O}}({g_t}} {h}x)\,d\nu ({h})\\ 
& \ge \int_P {f({h})\psi_\varepsilon ({g_{t}}} {h}x)d\nu ({h})  \ge \int_P {{f_\varepsilon }({h})\psi_\varepsilon ({g_{t}}} {h}x)\,d\nu ({h}) - \int_{P} {|{f_\varepsilon } - f|\,d\nu}  \\
&  \ge \int_P {{f_\varepsilon }({h})\psi_\varepsilon ({g_{t}}} {h}x)\,d\nu ({h}) - \nu\left(V_{r + {e^{ - 2 \lambda ' t}}}\ssm {V_r}\right) 
.\end{aligned}
}$$
{Since by \equ{rlimit} and \equ{llimit} we have $r + e^{-2  \lambda' t} \le r_*$,} {it follows that $\frac{r + {e^{ - 2 \lambda ' t}}}{{4} \sqrt{p}}  {I_{\mathfrak p}} \subset B^{\mathfrak p}(1)$. So in view of \equ{lb1}}
\begin{equation*}\begin{aligned}
& \nu\left(V_{r + {e^{ - 2 \lambda ' t}}}\ssm {V_r}\right)  \le c_2 \Leb \left(\frac{r + {e^{ - 2 \lambda ' t}}}{{4} \sqrt{p}}  {I_{\mathfrak p}} \ssm {\frac{r}{{4} \sqrt{p}} {I_{\mathfrak p}}} \right)  \\ 
& { \underset{\text{(Mean Value Theorem)}}{\le} c_2 \left(\frac{1}{\sqrt{4p}}\right)^{p}e^{ - 2 \lambda ' t}p\big(r + e^{ - 2 \lambda ' t} \big)^{{p-1}} \le c_2p e^{2 \lambda't}.}
\end{aligned}\end{equation*}
{Combining the above computations, we obtain}
\begin{align*}
\nu \left({\{h \in
{V_r} :{g_{t}}hx \in O\}}\right)
&
\ge \int_P {{f_\varepsilon }({h})\psi_\varepsilon ({g_{t}}} {h}x)\,d\nu ({h}) - {c_2}{p}{e^{ -2 \lambda ' t}} \\
&\ge \int_P {f_\varepsilon }\,d\nu\int_X {\psi_\varepsilon \,d\mu  - { M_{\ell} {M'_{\ell}} {{E}}} {e^{ - 2 \lambda ' t}}}  -   {c_2}{p}{e^{ -2 \lambda ' t}} \\
&\ge  \nu \left( {V_r} \right)  {{\mu ({\sigma _{\vre}}O)}} - ( { M_{\ell} {M'_{\ell}} {{E}}} + {c_2}{p}){e^{ - 2 \lambda ' t}} \\
& = \nu \left( {V_r} \right)\mu ({\sigma _{e^{- 2 \lambda ' t}}}O) -( { M_{\ell} {M'_{\ell}} {{E}}} + {c_2}{p}) {e^{ - 2 \lambda ' t}} \\
& \underset{ \equ{a'}}{\ge}  \nu \left( {V_r} \right)\mu ({\sigma _{e^{-  \lambda ' t}}}O) - {e^{ -  \lambda ' t}}.
\end{align*}
\end{proof}

\section{{Coverings by  Bowen boxes
}}\label{boxes}

{For $x \in X$, $t>0, {N \in \N}$ and {a} subset $S$ of $X$ let us define
\eq{escape 1}{{{A}^N_x(t,r,S)  :=  \big\{ h \in
\overline{V_r} :{g_{{\ell} t}}hx \in S \,\,\,  {\forall\, \ell \in \{1,\dots,N  \}\mkern 1mu}   \big\}}.}
Clearly the set \equ{ax} studied in the previous section 
{has the same measure as} ${A}^1_x(t,r,O)$.}
{Our goal in this section will be to inductively use Proposition \ref{exponential mixing} to find an effective covering result for the set {${A}^N_x(t,r,O^c)$}.
We start with the following theorem, {which is a modified and improved version of \cite[Proposition 5.1]{KMi1}}:

\begin{thm}\label{main cov} Let ${F}$ be a one-parameter {$\Ad$-}diagonalizable
{sub}semigroup of $G,$ and $P$ a subgroup of $G$ that has  {property} {\rm (EEP)} with respect to the flow $(X,F)$. Then there exist positive constants {$a',b'  \ge {\frac{\log ({8}\sqrt{p})}{\lambda_{\min}}} , C_2, \lambda$} 
such that 
for any {open $O\subset X$}, 
any
\eq{r estimate 2}{0<r<\frac{1}{2} \min \Big(r_0\big(\partial_{1/2} (O^c)\big), r_*\Big),}
any $x\in \partial_{
r}{(O^c)},$ any $t$ satisfying
{\eq{t estimate 2}{t \ge a'+b' \log \frac{1}{r},}} 
and any  {$N\in\N$}, 
the set ${A}^N_x(t,r,O^c)$ can be covered with at most
$$
{e^{\delta Nt}} \left( {1-\mu ({\sigma _{r}}O) } +\frac{C_2}{r^{p}} e^{-\lambda t} \right)^N$$
Bowen $(Nt,r)$-boxes in $P$. 
\end{thm}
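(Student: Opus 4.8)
The plan is to prove Theorem \ref{main cov} by induction on $N$, with the case $N=1$ as the heart of the argument. First we fix the constants: let $\lambda'$ be as in Proposition \ref{exponential mixing}, take the constant $\lambda$ in the conclusion to be $\min(\lambda',\lambda_{\min})$, and choose $a',b'$ large enough — in particular larger than the corresponding constants in Proposition \ref{exponential mixing}, than $\frac{\log 2}{\lambda'}$ and $\frac1{\lambda'}$, and than $\frac{\log(8\sqrt p)}{\lambda_{\min}}$ — so that the inequality $t\ge a'+b'\log\frac1r$ forces $e^{-\lambda't}\le r/2$ and $e^{-\lambda_{\min}t}\le\frac1{8\sqrt p}$ and makes Proposition \ref{exponential mixing} applicable to $t$ and to any base point $y\in X$ with $r_0(y)>2r$. (Any point of $\partial_{1/2}(O^c)$, in particular the given $x\in\partial_r(O^c)$, has this last property by \equ{r estimate 2}.) For a base point $y$ we abbreviate $\Phi(h):=g_thy$; note that $\Phi$ carries a Bowen $(t,r)$-box $g_{-t}\overline{V_r}\gamma g_t$ onto $\overline{V_r}(\gamma g_ty)$, a set of diameter $\le r/2$ by \equ{Bowen inc} and the fact that orbit maps $G\to X$ are $1$-Lipschitz.

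For $N=1$, fix an arbitrary base point $y$ with $r_0(y)>2r$; we bound the number of Bowen $(t,r)$-boxes needed to cover $A^1_y(t,r,O^c)$. The boxes $g_{-t}\overline{V_r}\gamma g_t$ ($\gamma\in\Lambda_r$) tessellate $P$, have pairwise $\nu$-null intersections, and each has measure $e^{-\delta t}\nu(V_r)$ by \equ{delta}; since $A^1_y(t,r,O^c)\subseteq\overline{V_r}$, it is covered by the family $\mathcal B$ of those boxes meeting it, so it suffices to bound $\#\mathcal B$. If a box $B_\gamma$ meets $A^1_y(t,r,O^c)$, then it meets $\overline{V_r}$ and has diameter $\le\frac12re^{-\lambda_{\min}t}$ by \equ{diam}, and moreover $\Phi(B_\gamma)=\overline{V_r}(\gamma g_ty)$ has diameter $\le r/2$ and contains a point of $O^c$, hence lies within $r/2$ of $O^c$; therefore
\[
\bigcup_{B_\gamma\in\mathcal B}B_\gamma\ \subseteq\ W\cap\Phi^{-1}\bigl(\{z\in X:\dist(z,O^c)\le r/2\}\bigr),\qquad W:=\partial_{re^{-\lambda_{\min}t}}(\overline{V_r}).
\]
Since $V_r\cap\Phi^{-1}\bigl(\{z:\dist(z,O^c)\le r/2\}\bigr)=V_r\ssm\{h\in V_r:g_thy\in\sigma_{r/2}O\}$, Proposition \ref{exponential mixing} applied to the open set $\sigma_{r/2}O$, together with $\sigma_{e^{-\lambda't}}(\sigma_{r/2}O)\supseteq\sigma_{r/2+e^{-\lambda't}}O\supseteq\sigma_rO$ (valid as $e^{-\lambda't}\le r/2$), bounds the $\nu$-measure of the left-hand set by $\nu(V_r)(1-\mu(\sigma_rO))+e^{-\lambda't}$; and $\nu(W\ssm V_r)\ll r^pe^{-\lambda_{\min}t}$ by a routine estimate using \equ{lb1} and the Mean Value Theorem (legitimate because $e^{-\lambda_{\min}t}\le\frac1{8\sqrt p}$ keeps $W$ inside the bi-Lipschitz range of $\exp$). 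Dividing the resulting bound on $\nu\bigl(\bigcup_{\mathcal B}B_\gamma\bigr)$ by the common box measure $e^{-\delta t}\nu(V_r)$, and using $\nu(V_r)\gg r^p$ and $r<1$, gives $\#\mathcal B\le e^{\delta t}\bigl(1-\mu(\sigma_rO)+\frac{C_2}{r^p}e^{-\lambda t}\bigr)$ for a suitable $C_2$.

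For the inductive step, suppose $A^N_x(t,r,O^c)$ is covered by at most $e^{\delta Nt}\bigl(1-\mu(\sigma_rO)+\frac{C_2}{r^p}e^{-\lambda t}\bigr)^N$ Bowen $(Nt,r)$-boxes, each meeting $A^N_x(t,r,O^c)$. Fix such a box $B=g_{-Nt}\overline{V_r}\gamma g_{Nt}$ and set $w:=\gamma g_{Nt}x$. The bijection $\phi\colon h'\mapsto g_{-Nt}h'\gamma g_{Nt}$ sends $\overline{V_r}$ onto $B$, sends Bowen $(t,r)$-boxes to Bowen $((N+1)t,r)$-boxes (as $F$ normalizes $P$), and satisfies $g_{(N+1)t}\phi(h')x=g_th'w$, so it maps $A^1_w(t,r,O^c)$ onto a set containing $B\cap A^{N+1}_x(t,r,O^c)$. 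Since $B$ meets $A^N_x(t,r,O^c)$, the defining condition at $\ell=N$ places $w$ within $r/4$ of $O^c$, so $w\in\partial_{1/2}(O^c)$ and $r_0(w)\ge r_0\bigl(\partial_{1/2}(O^c)\bigr)>2r$, whence the $N=1$ bound applies with base point $w$: $B\cap A^{N+1}_x(t,r,O^c)$ is covered by at most $e^{\delta t}\bigl(1-\mu(\sigma_rO)+\frac{C_2}{r^p}e^{-\lambda t}\bigr)$ Bowen $((N+1)t,r)$-boxes. Discarding those not meeting $A^{N+1}_x(t,r,O^c)$ and summing over all such $B$ — whose union covers $A^N_x(t,r,O^c)\supseteq A^{N+1}_x(t,r,O^c)$ — finishes the induction; the base case is the $N=1$ bound with $y=x$.

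The delicate point is the $N=1$ case: passing from a measure estimate to a bound on the \emph{number} of boxes. This is exactly why one must use a genuine tessellation of $P$ by Bowen boxes — then the number of kept boxes equals the $\nu$-measure of their union divided by $e^{-\delta t}\nu(V_r)$ — and why the union of the kept boxes must be confined to the $\Phi$-preimage of a small neighborhood of $O^c$, which is precisely the quantity controlled by Proposition \ref{exponential mixing}. The remaining effort is bookkeeping: keeping the neighborhood radii sharp enough that $\mu(\sigma_rO)$ (rather than a coarser quantity) appears, and absorbing the errors $e^{-\lambda't}$, $e^{-\lambda_{\min}t}$, and the thin-shell term $\nu(W\ssm V_r)$ into the single term $\frac{C_2}{r^p}e^{-\lambda t}$.
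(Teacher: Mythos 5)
Your proof is correct and follows essentially the same route as the paper's: a one-step count of Bowen boxes obtained by combining the tessellation structure, the diameter/thin-shell estimates, and Proposition \ref{exponential mixing}, followed by induction via conjugation back to a new base point $\gamma g_{Nt}x$ lying near $O^c$ (hence with controlled injectivity radius). The only difference is presentational: you bound the $\nu$-measure of the union of the bad boxes directly and divide by the common box measure, whereas the paper subtracts the count of entirely good boxes (Lemma \ref{sub count}) from the count of boxes meeting $\overline{V_r}$ (Lemma \ref{covering}); the two computations coincide.
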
}

{We remark that the above theorem, as well as its corollary proved later in this section, is applicable only to the situations when  the complement of $O$ is compact: indeed,  otherwise $r_0\big(\partial_{1/2} (O^c)\big) = 0$ and \equ{r estimate 2} is never satisfied.}

\smallskip

Before we prove the theorem, we need the following lemma:
{\begin{lem}
\label{sub count} For any
{$x \in X$, any $O \subset X$, any $0<r  \le r_*$  and  any $t>0$ we have} 
\eq{counting}{  \#\{\gamma \in {\Lambda_r} : \overline{V_r} \gamma g_t x \subset O \}
 \ge \frac{\nu\left({A}^1_x(t,r,{\sigma _{r/2}}{O})\right)}{\nu({g_{ - t}}{V_r} {g_t})}.           }
\end{lem}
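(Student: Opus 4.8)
\textbf{Proof plan for Lemma \ref{sub count}.}
The plan is to exploit the tessellation of $P$ by right translates of $V_r$, together with the hypothesis that $P$ is normalized by $F$. First I would fix $h\in A^1_x(t,r,\sigma_{r/2}O)$, so that $h\in\overline{V_r}$ and $g_thx\in\sigma_{r/2}O$ (see \equ{escape 1}). Since $P$ is normalized by $F$, the element $g_thg_{-t}$ again lies in $P=\bigcup_{\gamma\in\Lambda_r}\overline{V_r}\gamma$, hence there is some $\gamma=\gamma(h)\in\Lambda_r$ with $g_thg_{-t}\in\overline{V_r}\gamma$; equivalently, $h\in g_{-t}\overline{V_r}\gamma g_t$. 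Rewriting $g_thx=(g_thg_{-t})(g_tx)$ then shows $g_thx\in\overline{V_r}\gamma g_tx$.

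The main point is to check that every such $\gamma$ is ``good'', that is, $\overline{V_r}\gamma g_tx\subset O$. By \equ{Bowen inc} we have $\overline{V_r}\subset\overline{B^P(r/4)}$, hence $\diam\overline{V_r}\le r/2$; since the metric on $G$ is right-invariant and the metric on $X$ is the induced one (so $\dist(g_1z,g_2z)\le\dist(g_1,g_2)$ for all $z\in X$), the set $\overline{V_r}\gamma g_tx$ has diameter at most $r/2$ as well. As it contains the point $g_thx$, which by \equ{inner-core} lies at distance $>r/2$ from the closed set $O^c$, the triangle inequality forces every point of $\overline{V_r}\gamma g_tx$ to be at positive distance from $O^c$, hence to lie in $O$. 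Thus $\gamma(h)\in\{\gamma\in\Lambda_r:\overline{V_r}\gamma g_tx\subset O\}$.

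Finally I would run a volume count. Put $\Gamma_0:=\{\gamma(h):h\in A^1_x(t,r,\sigma_{r/2}O)\}$. The first paragraph gives $A^1_x(t,r,\sigma_{r/2}O)\subset\bigcup_{\gamma\in\Gamma_0}g_{-t}\overline{V_r}\gamma g_t$, and the second gives $\Gamma_0\subset\{\gamma\in\Lambda_r:\overline{V_r}\gamma g_tx\subset O\}$. Using right-invariance of $\nu$ on $P$ together with $\nu(\partial V_r)=0$ (cf.\ \equ{delta}), each box satisfies $\nu(g_{-t}\overline{V_r}\gamma g_t)=\nu(g_{-t}V_rg_t)$, so $\nu\big(A^1_x(t,r,\sigma_{r/2}O)\big)\le\#\Gamma_0\cdot\nu(g_{-t}V_rg_t)$; dividing and invoking the inclusion $\Gamma_0\subset\{\gamma:\overline{V_r}\gamma g_tx\subset O\}$ yields \equ{counting}.

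I do not expect a genuine obstacle here. The only slightly delicate bookkeeping is the diameter estimate that upgrades ``$\overline{V_r}\gamma g_tx$ meets $\sigma_{r/2}O$ and has diameter $\le r/2$'' to ``$\overline{V_r}\gamma g_tx\subset O$'', and the careful use of the normalization $g_tPg_{-t}=P$ so that $g_thx$ genuinely lands in a single translate $\overline{V_r}\gamma g_tx$ while $h$ itself lands in the corresponding Bowen box $g_{-t}\overline{V_r}\gamma g_t$.
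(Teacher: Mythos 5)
Your argument is correct and matches the paper's proof essentially step for step: both hinge on the diameter estimate $\diam(\overline{V_r}\gamma g_t x)\le r/2$ (from \equ{Bowen inc} and right-invariance), the inclusion $\partial_{r/2}(\sigma_{r/2}O)\subset O$, and a volume count over the tessellation by Bowen boxes. The only cosmetic differences are that you phrase the covering pointwise (choosing $\gamma(h)$ for each $h$) rather than by testing which boxes meet $A^1_x$, and you attribute $\nu(\partial V_r)=0$ to \equ{delta} when it is really part of the definition of a tessellation domain.
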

\begin{proof}
For any $\gamma \in {P}$ and any ${ h}_1,{ h}_2 \in \overline{V_r}$ 
we have:
{
\eq{bd}
{\begin{aligned}
{\dist}\big({h}_1\gamma g_t  x,{ h}_2\gamma g_t x\big) 
 \le {\dist}({ h}_1,{ h}_2) \le \diam(\overline{V_r}) \le r/2.
\end{aligned}}
}
{Hence, if $$ {A}^1_x(t,r,{\sigma _{r/2}}{O}) \cap g_{-t}\overline{V_r} \gamma g_t  \ne \varnothing$$  for $\gamma \in \Lambda_r$, 
then  {for some ${ h}\in  \overline{V_r}$  one has $g_t{ h}x \in \sigma _{r/2}{O} \cap \overline{V_r} \gamma g_tx$}, 
and, in view of {\equ{bd}} and ${\partial _{r/2}}({\sigma _{r/2}}{O}) \subset O$, we can conclude that 
{$\overline{V_r} \gamma g_t x \subset O$}.} Thus
$${A}^1_x(t,r,{\sigma _{r/2}}{O}) \subset \bigcup_{\substack{\gamma \in {\Lambda_r}\\ \overline{V_r} \gamma g_t x \subset O}}{  {g_{ - t}}\overline{V_r} \gamma {g_t}},$$
and \equ{counting} follows from the definition of $V_r$ being a tessellation domain relative to $\Lambda_r$.
\end{proof}}


\begin{proof}[Proof of Theorem \ref{main cov}]
{Take $a',b, \lambda'$ be as in Proposition \ref{exponential mixing}, and $\lambda_{\min}$ as in \equ{b1}.} Also set
\eq{b'}{b':=\max \left(b,\frac{1}{\lambda'},  {\frac{\log ({16}\sqrt{p})}{\lambda_{\min}} }\right).}
Fix {an open $O \subset X$}, 
and take $r$ as in \equ{r estimate 2}. {Also} take $x \in \partial_r (O^c)$  and 
$t $ as in {\equ{t estimate 2}.}

{First let us show how to derive the desired result for $N=1$ from Proposition \ref{exponential mixing}.} 
Observe that
 $$t \underset{\equ{t estimate 2}}{\ge} a'+b' \log \frac{1}{r} 
 \underset{\equ{r estimate 2}}{\ge} b'    \log \frac{2}{r_*}  \underset{(r_* < \frac14)}> b' \underset{\equ{b'}}{{\ge}}{\frac{\log (8\sqrt{p})}{\lambda_{\min}} }.               $$
{So, by} combining  Lemma \ref{covering} with Lemma \ref{sub count},
 we conclude that 
 %
 ${A}^1_x(t,r,O^c)$ can be covered with at most
{$${
\begin{aligned}
& \# \{ \gamma  \in \Lambda_r :{g_{ - t}}{\overline{V_r}\gamma}{g_t}  \cap \overline{V_r} \ne \varnothing \}  - \# \{ \gamma \in {\Lambda_r} : \overline{V_r} \gamma g_t x \subset O\} \\
& \le  
{e^{\delta t}}  \left(1 + {C_0}e^{-\lambda_{\min} t}\right) - \frac{\nu\left({A}^1_x(t,r,{\sigma _{r/2}}{O})\right)}{\nu({g_{ - t}}{V_r} {g_t})} 
\end{aligned}}$$
Bowen $(t,r)$-boxes in $P$, {where $C_0$ is as in \equ{defd0}}. {Note that {whenever $x \in \partial_r (O^c)$, \equ{rlimit} and  \equ{t estimate} follow from  \equ{r estimate 2}, \equ{t estimate 2} {and \equ{b'}}}. {Moreover, we have 
\eq{lambdat}{\lambda't \underset{\equ{t estimate 2}}{\ge}  \lambda'a'+\lambda'b' \log \frac{1}{r} \underset{\equ{a'},\, \equ{b'}}{\ge} \frac{\log \frac{2}{r_*}}{2} + \log \frac{1}{r} \underset{(r_* < \frac14)}{\ge}    \log \frac{2}{r}.               }
Hence}} one can apply Proposition \ref{exponential mixing} and conclude that ${A}^1_x(t,r,O^c)$ can be covered with at most
$$
\begin{aligned}{e^{\delta t}} \left( {1 +{C_0}e^{-\lambda_{\min} t}-\mu ({\sigma_{e^{-\lambda' t}}\sigma _{r/2}}O) }+ \frac{e^{-\lambda' t}}{\nu (V_r)} \right)
& \underset{\equ{lambdat} }\le  
{e^{\delta t}} \left( {1 +{C_0}e^{-\lambda_{\min} t}-\mu ({\sigma _{r}}O) }+ \frac{e^{-\lambda' t}}{\nu (V_r)} \right) \\
& \le 
{e^{\delta t}}\left( 1-\mu ({\sigma _{r}}O) + \frac{C_2}{r^{p}} e^{-\lambda t} \right)
=:N(r,t) 
\end{aligned}$$
}
Bowen $(t,r)$-boxes in $P,$ where $\lambda:= \min (\lambda_{\min}, \lambda ')$ and \eq{c2}{C_2:= 
{{C_0} + \frac{(4 \sqrt{p})^{p}}{c_1}}.}

\ignore{
$\nu (\partial V) = 0.$
\item
$V \gamma_1 \cap V \gamma_2 = \varnothing$ for different $\gamma_1,\gamma_2 \in \Lambda$.
\itemhttps://www.overleaf.com/project/5c8107c5e5384a560dd86568
$P = \bigcup\limits_{\gamma  \in \Lambda } {\overline V \gamma }.$
\end{itemize} 
By the construction in section $3$ of \cite{KM1}, since $P$ is a nilpotent Lie group of dimension $L$, there exists a suitable basis $\{X_1,X_2,\cdots, X_L\}$ of the Lie algebra of $P$ such that if ${{I_{\mathfrak p}}} = \{ \sum\limits_{j = 1}^P {{x_j}{X_j}:\left| {{x_j}} \right|}  < 1/2\} $ represents the unit cube in Lie algebra of $P$, then for any $R \ge 1$, $V=\exp(\frac{1}{R}{I_{\mathfrak p}})$ is a tessellation for $P$ relative to $\Lambda=\exp(\frac{1}{R} \Z X_1)$. So for any $r \le 1$ let us define $V_r=\exp (\frac{r}{4 \sqrt{L}} {I_{\mathfrak p}})$. Then for any $r \le 1$, $V_r$ is a tessellation domain of $P$ relative to $\Lambda_r=\exp(\frac{r}{4 \sqrt{L}} \Z X_1)$. Moreover, since $V_r$ is the image of the cube of side length $\frac{r}{4 \sqrt{L}}$ under the exponential map and exponential map is $2$-bi-Lipschitz in a small neighborhood of $0$, there exists $0<s'<1$ such that for any $r<s'$ we have
\eq{Bowen inc}{{B^P}(\frac{r}{{8\sqrt L }}) \subset {V_r} \subset {B^P}(r/2)}
.

Similar to \cite{KM1}, for any $r,t>0$ let $c_{r,t}$ to be the contraction bound of the map $g \to g_{-t}gg_t$ in $V_r$ defined by:
\[{c_{r,t}} = \mathop {\sup }\limits_{g,h \in V_r,g \ne h} \frac{{{\dist}({g_{ - t}}g{g_t},{g_{ - t}}h{g_t})}}{{{\dist}(g,h)}}\]
Since by \equ{Bowen inc} for any $0<r<s'$ $V_r \subset B^P(r/2)$, by \equ{basis1} we have for any $0<r<\min\{s,s'\}$ and any $t>0$:
\eq{contraction bound}{c_{r,t} \le 4e^{- \lambda_{\min} t}}
Define a function:
\[{f_{{V_r}}}(t) = \nu (\{ h \in P:{\dist}(h,\partial {V_r}) < {c_{r,t}} \cdot \diam(V_r) \} )\]
Note that $\nu (\partial(V_r))=0$ and $c_{r,t} \to 0$ as $t \to \infty$, thus ${f_{{V_r}}}(t) \to 0$ as $t \to \infty$. Moreover, since the exponential map is $2$-bi-Lipschitz in a small neighborhood of $0$ and $V_r$ is the image of the cube of side length $\frac{r}{4 \sqrt{L}}$, it's not hard to see that there exist constants $s''>0$ and ${K_1} \ge 1$ independent on $F$ and such that for any $0<r<s''$ and any $t>0$, if $c_{r,t} \cdot \diam(V_r)<1$ then:
\eq{local bound}{{f_{{V_r}}}(t) < {K_1} \cdot c_{r,t} \cdot \diam(V_r)}
This implies that in view of \equ{Bowen inc}, \equ{contraction bound} and \equ{local bound} we have for any $r < \min\{s,s',s''\}<1/8$ and any $t>\frac{1}{b} \log \frac{16 \sqrt{L}}{r} $: 
\eq{main local bound}{{f_{{V_r}}}(t) < {K_1} \diam(V_r) e^{- \lambda_{\min} t}<{K_1} r e^{- \lambda_{\min} t}<{K_1} e^{- \lambda_{\min} t}}
Define $r_*':= \min\{s,s',s''\}$ and let $r<\min\{s,s',s''\}$ and $t>\frac{1}{b} \log \frac{16 \sqrt{L}}{r} $. By a {\sl Bowen $(t,r)$-box} in $P$ we mean any set of the form $g_{-t}V_r \gamma g_t$  for $\gamma \in \Lambda$ in $P$. 
Define $S:=\{ \gamma  \in \Lambda_r :{g_{ - t}}{V_r\gamma}{g_t}  \cap {V_r} \ne \varnothing \}$.
Note that since $(V_r,\Lambda_r)$ is a tessellation domain, $V_r$ can be covered with $\#S$ Bowen $(t,r)$-boxes in $P$. The following Lemma gives us an upper bound for $\#S$. 
We have 
\begin{lem}
\label{covering} \[\# S  \le \frac{{\nu ({V_r})}}{{\nu ({g_{ - t}}{V_r}{g_t})}}\cdot \left(1 + \frac{{{f_{{V_r}}}(t)}}{{\nu ({V_r})}}\right)\]
\end{lem}

\begin{proof}
One has:
\[\# S  = \# \{ \gamma  \in \Lambda_r :{g_{ - t}}{V_r\gamma}{g_t}  \subset {V_r}\}  + \# \{ \gamma  \in \Lambda_r :{g_{ - t}}{V_r\gamma}{g_t}  \cap \partial {V_r} \ne \varnothing \} \]
 Since $(V_r,\Lambda_r)$ is a tessellation, the first term in the above sum is not greater than $\frac{\nu(V_r)}{\nu(g_{-t}V_rg_t)}$, while in view of the second term is not greater than:
 \[\frac{{\nu (\{ p \in P:{\dist}(p,\partial {V_r}) < \diam({g_{ - t}}{V_r}{g_t})\})}}{{\nu ({g_{ - t}}{V_r}{g_t})}} \leqslant \frac{{{f_{{V_r}}}(t)}}{{\nu ({g_{ - t}}{V_r}{g_t})}}\]
 and we are done.
 \end{proof}
\ignore{ Vice versa, if $g_tpq^{-1}g_{-t} \in B^P(r)$ then there exists $p_2 \in B^P(r)$ such that  $g_tpq^{-1}g_{-t}=p_2$ which implies that $p=g_{-t}p_2g_tq$. So we conclude that $p \in g_{-t}B^P(r)g_tq$ and  $g_{-t}B^P(r)g_tp$ and $g_{-t}B^P(r)g_tq$ have non-empty intersection and if and only if ${\dist}(g_tp,g_tq)<r$. Thus if $S$ is the maximal subset of $P$ such that ${\dist}(g_tp,g_tq) \ge r$ for any $ p,q \in P$, then ${\{ {g_{ - t}}{B^P}(r){g_t}p\} _{p \in S}}$ will be a covering of $P$ with disjoint Bowen-(t,r) balls in $P$. In particular, \comm{(?)} we can cover $B^P(r)$ with disjoint Bowen-(t,r) balls and it's easy to see that the number of Bowen balls in the cover will be at most:  
\eq{rectangle}{{\frac{{\nu ({B^P}(r))}}{{\nu ({g_{ - t}}{B^P}(r){g_t})}}}}}
Now let $x \in \partial_{r}U^c$ and let us define the following sets:
$$E_{V_r}(t,x):=\{p \in V_r:g_tpx \in U^c\}   $$
 $$
S' := \{ \gamma \in S: g_{-t}V_r \gamma g_t \subset ({E_{V_r}(t,x)})^c\}.
$$
Note that since $B^P(\frac{r}{8 \sqrt{L}}) \subset V_r$, we have:
\eq{Bowen inclusion}{E_{x,\frac{r}{16  \sqrt{L}}} \subset {E_{V_r}(t,x)}.} 
Here is our next observation:
 \begin{lem}
\label{covering of A^P}  ${A}^P(t,{\frac{r}{16 \sqrt{L}}},{\sigma _{r/2}}{U},x) \subset \bigcup_{\gamma \in S'} {  {g_{ - t}}V_r \gamma {g_t}}$.
\end{lem}

\begin{proof}
For any $\gamma \in \Lambda_r$ and any $p_1,p_2 \in V_r$ 
by \equ{basis1} we have:
\eq{Bowen distance}{
\begin{aligned}
{\dist}({g_{-t}}p_1\gamma g_t,{g_{-t}}p_2\gamma g_t) = {\dist}({g_{-t}}p_1{p_2}^{-1}{g_{ t}},e)
& < 8re^{- \lambda_{\min} t}<8r \cdot \frac{r}{16 \sqrt{L}}<\frac{r}{16 \sqrt{L}} .\\
\end{aligned}
}
Also  we have:
\eq{Conjugate Bowen distance}{
\begin{aligned}
{\dist}(g_t({g_{-t}}p_1\gamma g_t ) x,g_t({g_{-t}}p_2\gamma g_t)x) = {\dist}(p_1 \gamma g_tx,p_2 \gamma g_t x) 
& \le {\dist}(p_1,p_2)<r/2.
\end{aligned}
}

Hence if $ \in {A}^P(t,{\frac{r}{16 \sqrt{L}}},{\sigma _{r/2}}{U},x) \cap g_{-t}V_r \gamma g_t  \ne \varnothing $ for $\gamma \in \Lambda_r$, then by \equ{Bowen distance} and \equ{Conjugate Bowen distance} and in view of ${\partial _{r/2}}{\sigma _{r/2}}{U} \subset U$ we have $g_{-t}V_r \gamma g_t \subset 
{A}^P(t,{\frac{r}{16  \sqrt{L}}},\partial _{r/2}{\sigma _{r/2}}{U},x) \subset {A}^P(t,{\frac{r}{16  \sqrt{L}}},{U},x) \subset ({E_{V_r}(t,x)})^c$.    
 Hence ${  {g_{ - t}}V_r \gamma {g_t}} \subset \bigcup_{\gamma \in S'} {  {g_{ - t}}V_r \gamma {g_t}}$ and we are finished.  
\end{proof}
}


\ignore{$\alpha \ge 0$ be such that for any $r$ and any $t$} 
\ignore{And for any $r>0$ and any $k \in \N$ :
\eq{Bowen measure1}{\frac{{\nu ({g_{ - kt}}{B^P}(r){g_{tk}})}}{{{{(\nu ({g_{ - t}}{B^P}(r){g_t}))}^k}}} = \nu {({B^P}(r))^{k - 1}}}}

Now let $g_{-t}\overline{V_r} \gamma g_t$ be one of the Bowen $(t,r)$-boxes in the above cover which has non-empty intersection with ${A}^1_x(t,r,O^c)$. 
Take any $q={g_{ - t}}{h} \gamma{g_t} \in g_{-t}\overline{V_r} \gamma g_t$; then
${{g_t}qx = {h} \gamma g_t x}$, hence
${\left\{ {{g_t}qx:q \in g_{-t}\overline{V_r} \gamma g_t\,} \right\} = \left\{ {h} \gamma g_t x:{h} \in \overline{V_r} \right\}.}$
Consequently, 
\eq{induction2}{
\{q \in g_{-t}\overline{V_r} \gamma g_t: g_{2t}qx \notin O\}=g_{-t}{{A}^1_x(t,r,O^c)} \gamma g_t.}
Note that since {$\diam(\overline{V_r } \gamma)<r$} and $g_{-t}\overline{V_r} \gamma g_t \cap {{A}^1_x(t,r,O^c)}$ is non-empty, we have $\gamma g_t x \in \partial_{r} (O^c)$. 
Hence, 
by {going through} the same procedure, this time using $\gamma g_t x$ in place of $x$, we can cover the set 
in the left hand side of  \equ{induction2} with at most $N(r,t)$ Bowen $(2t,r)$-boxes in $P$. Therefore, we conclude that the set ${A}^2_x(t,r,O^c)$ can be covered with at most $N(r,t)^2$ Bowen $(2t,r)$-box{es} in $P$. By doing this procedure inductively, we can see that for any $N \in \N$, the set ${A}^N_x(t,r,O^c)$ can be covered with at most
$${N(r,t)^N =  
e^{\delta Nt} \left( {1-\mu ({\sigma _{r}}O) } +\frac{C_2}{r^{p}} e^{-\lambda t} \right)^N}$$ Bowen $(Nt,r)$-box{es} in $P$. {This finishes the proof.}
\end{proof}

{Next} we are going to apply {Theorem \ref{main cov}} to cover $ {{A}^N_x(t,r,O^c)}$ with Bowen $(Nt,\theta)$-boxes, where {$r\le \theta \le \frac {r_*}{2}$}.

\begin{thm}\label{main cor}
Let ${F}$ be a one-parameter {$\Ad$-}diagonalizable
{sub}semigroup of $G,$ and $P$ a subgroup of $G$ that has {property} {\rm (EEP)}. Then, 
with {$a',b'
, C_2, \lambda$} as in   Theorem  \ref{main cov}, 
for any {open $O\subset X$, {any $t$ as in
{\equ{t estimate 2}}, 
any $r$ such that
\eq{r estimate 3}{0<r<\frac{1}{4} \min \Big(r_0\big(\partial_{1} (O^c)\big), r_*\Big),}
any $x\in \partial_{
r}{(O^c)},$ 
any $N \in \N$, and any $\theta\in {\left[{r} ,\frac {r_*}{2}\right]}$}, the set $ {{A}^N_x(t,r,O^c)}$ can be covered with at most}
$${\frac{c_2}{c_1} } {\left(\frac{2r}{\theta} \right)^p} 
{e^{\delta Nt}} \left(1 - \mu \big( {\sigma_{{{4} {\theta}}}} O  \big) +\frac{C_2}{r^{p}} e^{-\lambda t}   \right)^N $$
Bowen $(Nt,\theta)$-boxes in $P$.
\end{thm}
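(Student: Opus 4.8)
The plan is to mimic, almost verbatim, the inductive covering scheme used to prove Theorem~\ref{main cov}, but to produce Bowen $(\cdot,\theta)$-boxes rather than Bowen $(\cdot,r)$-boxes, while still feeding the effective equidistribution statement (Proposition~\ref{exponential mixing}) only the finer scale $r$. Keeping equidistribution at scale $r$ is what yields the error term $\tfrac{C_2}{r^p}e^{-\lambda t}$ rather than $\theta^{-p}e^{-\lambda t}$, and, more importantly, what makes Proposition~\ref{exponential mixing} applicable at every base point that arises in the induction: all such points lie in $\partial_{2\theta}(O^c)\subseteq\partial_1(O^c)$, so by \equ{r estimate 3} their injectivity radius exceeds $4r$, whereas nothing forces it to exceed $\theta$. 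The coarse scale $\theta$ enters only as the thickness of the output boxes and, via the inclusion $\sigma_{e^{-\lambda't}}\sigma_{\theta/2}O\supseteq\sigma_{4\theta}O$, as the ``$\sigma_{4\theta}$'' in the measure term.

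The base case $N=1$ is the heart of the matter: for $x\in\partial_r(O^c)$ and $t$ as in \equ{t estimate 2}, I claim $A^1_x(t,r,O^c)\subset\overline{V_r}$ can be covered by at most $\tfrac{c_2}{c_1}\big(\tfrac{2r}{\theta}\big)^pe^{\delta t}\big(1-\mu(\sigma_{4\theta}O)+\tfrac{C_2}{r^p}e^{-\lambda t}\big)$ Bowen $(t,\theta)$-boxes. Fix the tessellation of $P$ by Bowen $(t,\theta)$-boxes obtained by pushing the $\Lambda_\theta$-tessellation of $P$ by $\overline{V_\theta}$ through the automorphism $g\mapsto g_{-t}gg_t$ of $P$. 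Count separately the tiles contained in $\overline{V_r}$ and the tiles meeting $\partial\overline{V_r}$; by \equ{lb1}, \equ{delta}, and $\diam(\text{tile})\le\tfrac12\theta e^{-\lambda_{\min}t}$ (which is $\ll r$ for admissible $t$), the total is at most $\tfrac{\nu(V_r)}{\nu(V_\theta)}e^{\delta t}$ plus $e^{\delta t}$ times a quantity $\ll r^{-p}e^{-\lambda t}$. Among the interior tiles, discard the ``good'' ones, whose $g_t(\cdot)x$-image lies in $O$: exactly as in Lemma~\ref{sub count}, since $\diam\overline{V_\theta}\le\theta/2$ a tile is good as soon as it contains one point $h$ with $g_thx\in\sigma_{\theta/2}O$, so the set of $h\in\overline{V_r}$ missed by interior good tiles has $\nu$-measure at most $\nu(V_r)-\nu\big(\{h\in V_r:g_thx\in\sigma_{\theta/2}O\}\big)$ plus a shell of width $\tfrac12\theta e^{-\lambda_{\min}t}$ about $\partial\overline{V_r}$; Proposition~\ref{exponential mixing} at scale $r$ together with $\sigma_{e^{-\lambda't}}\sigma_{\theta/2}O\supseteq\sigma_{4\theta}O$ bounds this by $\nu(V_r)\big(1-\mu(\sigma_{4\theta}O)\big)+e^{-\lambda't}+(\text{shell})$. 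Dividing by the tile volume $e^{-\delta t}\nu(V_\theta)$ --- the \emph{same} normalization used in the total count --- the factor $\nu(V_r)$ cancels and $\mu(\sigma_{4\theta}O)$ comes out with coefficient exactly $1$; only then do I estimate $\tfrac{\nu(V_r)}{\nu(V_\theta)}\le\tfrac{c_2}{c_1}\big(\tfrac r\theta\big)^p\le\tfrac{c_2}{c_1}\big(\tfrac{2r}{\theta}\big)^p$ and absorb the shell and $e^{-\lambda't}/\nu(V_r)$ into $\tfrac{C_2}{r^p}e^{-\lambda t}$, taking $\lambda=\min(\lambda',\lambda_{\min})$ and enlarging $a',b'$ if needed.

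The iteration then proceeds as in the proof of Theorem~\ref{main cov}. If a Bowen $(jt,\theta)$-box $g_{-jt}\overline{V_\theta}\eta g_{jt}$ in the current cover of $A^j_x(t,r,O^c)$ meets $A^{j+1}_x(t,r,O^c)$, the part of it on which $g_{(j+1)t}qx\notin O$ equals $g_{-jt}\big(A^1_z(t,\theta,O^c)\eta\big)g_{jt}$ with $z=\eta g_{jt}x$, and membership of the orbit in $O^c$ at time $jt$ forces $z\in\partial_\theta(O^c)\subseteq\partial_1(O^c)$; one then covers $A^1_z(t,\theta,O^c)\subset\overline{V_\theta}$ by Bowen $(t,\theta)$-boxes using exactly the Step~$1$ device with the domain $\overline{V_\theta}$ in place of $\overline{V_r}$ --- the interior-tile count is now simply $e^{\delta t}$, so no new $\big(\tfrac{2r}{\theta}\big)^p$ factor is generated, and the measure of good points inside $\overline{V_\theta}$ is estimated by subdividing $\overline{V_\theta}$ into $\Lambda_r$-translates of $\overline{V_r}$ (Lemma~\ref{coveringtheta}) and applying Proposition~\ref{exponential mixing} at scale $r$ around each translated base point, which lies in $\partial_{2\theta}(O^c)\subseteq\partial_1(O^c)$. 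Translating by $\eta$ and conjugating by $g_{-jt}(\cdot)g_{jt}$ --- legitimate since $P$ is normalized by $F$ --- turns each output into a Bowen $((j+1)t,\theta)$-box. Multiplying the bounds over $j=1,\dots,N$ (the $\mu(\sigma_{4\theta}O)$ coefficient being $1$ at every stage, and any coarser $\sigma$ being downgraded to $\sigma_{4\theta}$ via $\sigma_rO\supseteq\sigma_{4\theta}O$) gives the asserted estimate.

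The delicate point throughout is the two-scale bookkeeping: to make $\mu(\sigma_{4\theta}O)$ appear with coefficient $1$, rather than some power of $c_1/c_2$, relative to the leading factor $\tfrac{c_2}{c_1}\big(\tfrac{2r}{\theta}\big)^pe^{\delta Nt}$, one must leave $\nu(V_r)/\nu(V_\theta)$ un-estimated until the very last line and normalize both the ``all interior $\theta$-tiles'' count and the ``good interior $\theta$-tiles'' count by the identical volume $e^{-\delta t}\nu(V_\theta)$. The remaining bookkeeping --- the shells about $\partial\overline{V_r}$ and $\partial\overline{V_\theta}$ and the error from the $\simeq(\theta/r)^p$ applications of Proposition~\ref{exponential mixing} in the iterative step --- has to be absorbed into $\tfrac{C_2}{r^p}e^{-\lambda t}$, which is precisely where the lower bound $t\ge a'+b'\log(1/r)$ from \equ{t estimate 2} and the choice $\lambda<\min(\lambda',\lambda_{\min})$ are used.
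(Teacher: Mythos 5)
Your base case is essentially sound: applying Proposition \ref{exponential mixing} on $V_r$ and normalizing both the total and the good tile counts by the same volume $e^{-\delta t}\nu(V_\theta)$ does yield the prefactor $\frac{c_2}{c_1}\big(\frac{2r}{\theta}\big)^p$ with coefficient $1$ on $\mu(\sigma_{4\theta}O)$. The genuine gap is in your inductive step. There you must bound from below $\nu\big(\{h\in\overline{V_\theta}: g_thz\in\sigma_{\theta/2}O\}\big)$ by roughly $\nu(V_\theta)\,\mu(\sigma_{4\theta}O)$, and since Proposition \ref{exponential mixing} cannot be invoked at scale $\theta$ (nothing forces $r_0(z)>2\theta$), you subdivide $\overline{V_\theta}$ into $\Lambda_r$-translates of $\overline{V_r}$. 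But the translates \emph{contained} in $\overline{V_\theta}$ cover it only up to a shell of width $\asymp r$ around $\partial V_\theta$, of relative measure $\asymp r/\theta$; the translates merely \emph{meeting} $\overline{V_\theta}$ overshoot it and give no lower bound on the good measure inside $\overline{V_\theta}$. The best the subdivision can yield is therefore a per-step factor of the form $1-\big(1-O(r/\theta)\big)\mu(\sigma_{4\theta}O)+O(e^{-\lambda' t}/r^p)$. The term $O(r/\theta)$ does not decay in $t$ and cannot be absorbed into $\frac{C_2}{r^p}e^{-\lambda t}$; worse, the theorem allows $\theta=r$, and in the application (proof of Theorem \ref{dimension drop 3}) one takes $r\asymp\theta$, so this error is of order $1$ and the resulting bound is vacuous.

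The paper avoids this entirely by \emph{not} re-running the induction at the coarse scale: it deduces Theorem \ref{main cor} from Theorem \ref{main cov} in one step. By \equ{theta-r} a Bowen $(Nt,\theta)$-box has diameter $\le \frac{r}{16\sqrt p}$, so every box of the $\Lambda_\theta$-tessellation meeting $A^N_x(t,r,O^c)$ lies in $V_{2r}$ and, propagating the diameter bound along the orbit, in $A^N_x\big(t,2r,\partial_{2\theta}(O^c)\big)\subset A^N_x\big(t,2r,(\sigma_{2\theta}O)^c\big)$. Theorem \ref{main cov}, applied with $O$ replaced by $\sigma_{2\theta}O$ and $r$ by $2r$, bounds the measure of that set by $\nu(V_{2r})\big(1-\mu(\sigma_{4\theta}O)+\frac{C_2}{r^p}e^{-\lambda t}\big)^N$, and dividing by the box volume $e^{-\delta Nt}\nu(V_\theta)$ gives exactly the stated count. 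All equidistribution is thus performed on translates of $V_{2r}$ only, and the coarse scale enters solely through the thickening $\partial_{2\theta}$ and shrinking $\sigma_{2\theta}$ — which is precisely what eliminates the shell loss. I would replace your iteration by this measure-then-divide reduction.
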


\begin{proof}
{Consider the covering  of $ {{A}^N_x(t,r,O^c)}$  by Bowen boxes  $\left\{g_{-Nt}\overline{V_\theta} \gamma g_{Nt}: \gamma\in\Lambda_\theta\right\}$.
Let $R$ be one of those boxes,  so that  
\eq{nonempty}{R \cap {{A}_x^N\left(t,r,O^c\right)} \neq \varnothing.}
Since $\theta < r_*$, in view of \equ{diam} we have {$\diam (R) \le  \frac{\theta}{2} e^{-\lambda_{\min} Nt }$}; furthermore, 
{\eq{theta-r}{ \theta e^{-\lambda_{\min} t} \underset{{\equ{t estimate 2}}}\le  \theta e^{-\lambda_{\min} b' \log \frac{1}{r}} \le \theta e^{- {\log (8 \sqrt{p})} \log \frac{1}{r}} \underset{(\theta \le 1)}\le  {\frac{r}{8 \sqrt{p}}} .
}}
Since $R\cap V_r \ne \varnothing$, it follows that {$$R \subset \partial_{\frac{\theta}{2} e^{-\lambda_{\min} Nt}}\overline{V_r} \underset{\equ{theta-r}}{\subset} \partial_{{\frac{r}{16 \sqrt{p}}}} \overline{V_r} \subset {V_{2r}},$$}  
where in the last inclusion we again use the $2$-bi-Lipschitz property of $\exp$. 
} 

We now claim that $R$ is  {contained in}
${{A}^N_x\big(t,{2r},{{\partial_{{ {2 \theta}}} (O^c)}}\big)}$. Indeed,  in view of \equ{nonempty} we can find $h_1':= g_{-Nt}h_1 \gamma g_{Nt} \in R$ such that $g_{i t}{h_1'}x \in O^c$ for all $i \in \{1,\dots,N\}$ (here $h_1\in \overline{V_\theta}$).  
Then  take any
 $h_2':=g_{-Nt}h_2 \gamma g_{Nt} \in R$, where again $h_2\in \overline{V_\theta}$,  and for any $i \in \{1,\dots,N\}$ write 
 $${
\begin{aligned}
g_{i t}{h_2'}x
&=(g_{-(N-i)t}{h_2 h_1^{-1}} g_{ (N-i)t})g_{i t}{h_1'}x \\
& \in (g_{-(N-i)t}{h_2 h_1^{-1}} g_{ (N-i)t}) O^c 
 \underset{\equ{diam}}{\subset} \partial_{2 \theta e^{-\lambda_{\min} (N-i)t}}  (O^c)   \subset \partial_{ 2\theta}  (O^c).
\end{aligned} }$$

{Note that since $\theta \le r_*/2  < 1/8$, we have $\partial_{1/2}\big(\partial_{2 \theta} (O^c)\big) \subset \partial_1 O^c$, which implies $r_0\Big(\partial_{1/2}\big(\partial_{2 \theta} (O^c)\big)\Big) \ge r_0(\partial_1 O^c) $. Thus, since \equ{r estimate 3} is satisfied, the following is satisfied as well:
$$0<2r< \frac{1}{2} \min \Big(r_0\big(\partial_{1/2}\big(\partial_{2 \theta} (O^c)\big), r_*\big)\Big) .         $$
Consequently, Theorem \ref{main cov}, applied to $O$ replaced with  {$  \sigma_{{{2}\theta}} O  $ and $r$ replaced with $2r$,} implies that 
\begin{align*}
\nu\Big({{A}^N_x\big(t,{2r},{{\partial_{{ {2 \theta}}} (O^c)}}\big)}\Big)\ &\,\underset{\equ{boundarysigma}}\le \nu\Big({{A}^N_x\big(t,{2r},{{(\sigma_{{ {2 \theta}}} O)^c\big)}}}\Big)\\ & \ \  \le \ \nu(g_{-Nt}{V_{2r}}  g_{Nt}) \cdot {e^{\delta Nt}}\left(1 - \mu \big({\sigma_{2r} (\sigma_{{{{2} \theta}}}} O ) \big) +\frac{C_2}{(2r)^{p}} e^{-\lambda t}  \right)^N 
 \\ & 
 \underset{{(\theta \ge r)} }\le   \nu({V_{2r}} )
 \left(1 - \mu \big( {\sigma_{{{4} {\theta}}}} O  \big) +\frac{C_2}{r^{p}} e^{-\lambda t}  \right)^N
\end{align*}
{for any $x\in \partial_r( O^c) \subset \partial_{2r}\big((\sigma_{{{2}\theta}} O )^c\big)$}.
This forces the number of $\gamma\in\Lambda_\theta$ such that $$g_{-Nt}\overline{V_\theta} \gamma g_{Nt}\cap {{A}_x^N\left(t,r,O^c\right)} \neq \varnothing$$
to be not greater than $\left(1 - \mu \big({ \sigma_{{{4} {\theta}}}} O  \big) +\frac{C_2}{r^{p}} e^{-\lambda t}  \right)^N$ multiplied by
$$
\frac{ \nu({V_{2r}} )}{\nu(g_{-Nt}{V_{\theta}}  g_{Nt})}
 \underset{\equ{lb1}} \le  \frac{ c_2 \big(\frac{2r}{4\sqrt{p}}\big)^p}{e^{-\delta Nt}c_1\big(\frac{\theta}{4\sqrt{p}}\big)^p}
 = {\frac{c_2}{c_1} } {e^{\delta Nt}}{\left(\frac{2r}{\theta} \right)^p}.
$$}
This finishes the proof of the theorem.
\end{proof}

{\ignore{
We will also need to cover Bowen boxes by small balls. 
The next lemma provides a bound for the number of balls of radius $re^{-\lambda' t}$ needed to cover a Bowen $(t,r)$-box.
\begin{lem}
\label{coveringballs} There exists $K_4 > 0$ such that for any $0<r < r_*'$ and any {$t>0 $},  any Bowen $(t,r)$-box in $P$ can be covered with at most  $K_4 \frac{{\nu ({g_{ - t}}V_r {g_{t}})}}{{\nu \left({B^P}(r e^{ -  \lambda' t})\right)}}$
balls in $P$ of radius $re^{- \lambda't}$. 
\end{lem}
\begin{proof} Let $B = g_{-t}V_r \gamma g_t$ be a Bowen $(t,r)$-box. 
In view of the Besicovitch covering property of $P$, any covering of $B$ by balls in $P$ of radius $re^{- \lambda't}$ has a subcovering of  index uniformly bounded from above by a fixed constant   (the Besicovitch constant of $P$). The union of those balls is contained in the $re^{-\lambda' t}$-neighborhood of $B$. But since $B$ is a translate of the exponential image of a box in $\mathfrak p$ whose  smallest sidelength is $re^{-\lambda' t}$, it follows that the measure of  the $re^{-\lambda' t}$-neighborhood of $B$ is bounded by a uniform constant times $\nu(B)$, and the lemma follows.\end{proof}}}

\ignore{
\begin{proof}[Proof of Theorem \ref{main cov}]
{Take $a',b',E', \lambda'$ be as in Proposition \ref{exponential mixing}, $K_P$ as in \equ{cube d}, {$K_4$ as in Lemma \ref{coveringballs}} and $\lambda_{\min}$ as in \equ{b1}.}
Fix $U \subset X$   such that $O^c$ is compact, and take $0<r<r_0$, $x\in \partial_{r}{O^c},$ and  $t>a'+b' \log \frac{1}{r}$. Define for any $k \in \N$
$$E_{V_r}(t,{k},x ):=\big\{p \in V_r:g_{{\ell t}}px {\,\notin U}\,\, {\forall \ell \in \{1,2,\cdots,k     \}}\big\}.   $$
Recall that our goal is to construct a covering of the set ${A}^P\big(t,{r},{O^c},{k},x\big)$ {for any $k \in \N$}, which is a subset of $E_{V_r}(t,{k},x)$ in view of \equ{Bowen inc}. Note that 
 for $\gamma \in P$, the Bowen $(t,r)$-box $g_{-t}V_r \gamma g_t $ does not intersect $E_{V_r}(t,{1},x )$ if and only if $V_r \gamma g_t x \subset U$. Combining  Lemma \ref{covering} with Lemma \ref{sub count} {and then with Proposition \ref{exponential mixing}}, we conclude that 
 %
 ${E_{V_r}(t,{1},x)}$ can be covered with at most
\eq{nrt}{
\begin{aligned}
\# S_{r,t}  &- \# \{ \gamma \in S_{r,t} : V_r \gamma g_t x \subset U\} \\
& \le  \frac{{\nu ({V_r})}}{{\nu ({g_{ - t}}{V_r}{g_t})}}  \left(1 + \frac{{K_P}e^{-\lambda_{\min} t}}{{\nu ({V_r})}}\right) - \frac{\nu\left({A}^P(t,{r},{\sigma _{r/2}}{U}, {1},x)\right)}{\nu({g_{ - t}}V_r {g_t})}   \\
& \le \frac{{\nu ({V_r})}}{{\nu ({g_{ - t}}{V_r}{g_t})}} \cdot \left( {1 + \frac{{{{K_P}}{e^{ - \lambda_{\min} t}} - 
{\nu\left(B^P\big(r \big)\right)\mu ({\sigma _{r}}U)  + E'{e^{ - \lambda 't}}}
}}{{\nu ({V_r})}}} \right)\\&=:N(r,t) 
\end{aligned}}
Bowen $(t,r)$-boxes in $P$.

\ignore{
$\nu (\partial V) = 0.$
\item
$V \gamma_1 \cap V \gamma_2 = \varnothing$ for different $\gamma_1,\gamma_2 \in \Lambda$.
\itemhttps://www.overleaf.com/project/5c8107c5e5384a560dd86568
$P = \bigcup\limits_{\gamma  \in \Lambda } {\overline V \gamma }.$
\end{itemize} 
By the construction in section $3$ of \cite{KM1}, since $P$ is a nilpotent Lie group of dimension $L$, there exists a suitable basis $\{X_1,X_2,\cdots, X_L\}$ of the Lie algebra of $P$ such that if ${{I_{\mathfrak p}}} = \{ \sum\limits_{j = 1}^P {{x_j}{X_j}:\left| {{x_j}} \right|}  < 1/2\} $ represents the unit cube in Lie algebra of $P$, then for any $R \ge 1$, $V=\exp(\frac{1}{R}{I_{\mathfrak p}})$ is a tessellation for $P$ relative to $\Lambda=\exp(\frac{1}{R} \Z X_1)$. So for any $r \le 1$ let us define $V_r=\exp (\frac{r}{4 \sqrt{L}} {I_{\mathfrak p}})$. Then for any $r \le 1$, $V_r$ is a tessellation domain of $P$ relative to $\Lambda_r=\exp(\frac{r}{4 \sqrt{L}} \Z X_1)$. Moreover, since $V_r$ is the image of the cube of side length $\frac{r}{4 \sqrt{L}}$ under the exponential map and exponential map is $2$-bi-Lipschitz in a small neighborhood of $0$, there exists $0<s'<1$ such that for any $r<s'$ we have
\eq{Bowen inc}{{B^P}(\frac{r}{{8\sqrt L }}) \subset {V_r} \subset {B^P}(r/2)}
.

Similar to \cite{KM1}, for any $r,t>0$ let $c_{r,t}$ to be the contraction bound of the map $g \to g_{-t}gg_t$ in $V_r$ defined by:
\[{c_{r,t}} = \mathop {\sup }\limits_{g,h \in V_r,g \ne h} \frac{{{\dist}({g_{ - t}}g{g_t},{g_{ - t}}h{g_t})}}{{{\dist}(g,h)}}\]
Since by \equ{Bowen inc} for any $0<r<s'$ $V_r \subset B^P(r/2)$, by \equ{basis1} we have for any $0<r<\min\{s,s'\}$ and any $t>0$:
\eq{contraction bound}{c_{r,t} \le 4e^{- \lambda_{\min} t}}
Define a function:
\[{f_{{V_r}}}(t) = \nu (\{ h \in P:{\dist}(h,\partial {V_r}) < {c_{r,t}} \cdot \diam(V_r) \} )\]
Note that $\nu (\partial(V_r))=0$ and $c_{r,t} \to 0$ as $t \to \infty$, thus ${f_{{V_r}}}(t) \to 0$ as $t \to \infty$. Moreover, since the exponential map is $2$-bi-Lipschitz in a small neighborhood of $0$ and $V_r$ is the image of the cube of side length $\frac{r}{4 \sqrt{L}}$, it's not hard to see that there exist constants $s''>0$ and ${K_1} \ge 1$ independent on $F$ and such that for any $0<r<s''$ and any $t>0$, if $c_{r,t} \cdot \diam(V_r)<1$ then:
\eq{local bound}{{f_{{V_r}}}(t) < {K_1} \cdot c_{r,t} \cdot \diam(V_r)}
This implies that in view of \equ{Bowen inc}, \equ{contraction bound} and \equ{local bound} we have for any $r < \min\{s,s',s''\}<1/8$ and any $t>\frac{1}{b} \log \frac{16 \sqrt{L}}{r} $: 
\eq{main local bound}{{f_{{V_r}}}(t) < {K_1} \diam(V_r) e^{- \lambda_{\min} t}<{K_1} r e^{- \lambda_{\min} t}<{K_1} e^{- \lambda_{\min} t}}
Define $r_*':= \min\{s,s',s''\}$ and let $r<\min\{s,s',s''\}$ and $t>\frac{1}{b} \log \frac{16 \sqrt{L}}{r} $. By a {\sl Bowen $(t,r)$-box} in $P$ we mean any set of the form $g_{-t}V_r \gamma g_t$  for $\gamma \in \Lambda$ in $P$. 
Define $S:=\{ \gamma  \in \Lambda_r :{g_{ - t}}{V_r\gamma}{g_t}  \cap {V_r} \ne \varnothing \}$.
Note that since $(V_r,\Lambda_r)$ is a tessellation domain, $V_r$ can be covered with $\#S$ Bowen $(t,r)$-boxes in $P$. The following Lemma gives us an upper bound for $\#S$. 
We have 
\begin{lem}
\label{covering} \[\# S  \le \frac{{\nu ({V_r})}}{{\nu ({g_{ - t}}{V_r}{g_t})}}\cdot \left(1 + \frac{{{f_{{V_r}}}(t)}}{{\nu ({V_r})}}\right)\]
\end{lem}

\begin{proof}
One has:
\[\# S  = \# \{ \gamma  \in \Lambda_r :{g_{ - t}}{V_r\gamma}{g_t}  \subset {V_r}\}  + \# \{ \gamma  \in \Lambda_r :{g_{ - t}}{V_r\gamma}{g_t}  \cap \partial {V_r} \ne \varnothing \} \]
 Since $(V_r,\Lambda_r)$ is a tessellation, the first term in the above sum is not greater than $\frac{\nu(V_r)}{\nu(g_{-t}V_rg_t)}$, while in view of the second term is not greater than:
 \[\frac{{\nu (\{ p \in P:{\dist}(p,\partial {V_r}) < \diam({g_{ - t}}{V_r}{g_t})\})}}{{\nu ({g_{ - t}}{V_r}{g_t})}} \leqslant \frac{{{f_{{V_r}}}(t)}}{{\nu ({g_{ - t}}{V_r}{g_t})}}\]
 and we are done.
 \end{proof}
\ignore{ Vice versa, if $g_tpq^{-1}g_{-t} \in B^P(r)$ then there exists $p_2 \in B^P(r)$ such that  $g_tpq^{-1}g_{-t}=p_2$ which implies that $p=g_{-t}p_2g_tq$. So we conclude that $p \in g_{-t}B^P(r)g_tq$ and  $g_{-t}B^P(r)g_tp$ and $g_{-t}B^P(r)g_tq$ have non-empty intersection and if and only if ${\dist}(g_tp,g_tq)<r$. Thus if $S$ is the maximal subset of $P$ such that ${\dist}(g_tp,g_tq) \ge r$ for any $ p,q \in P$, then ${\{ {g_{ - t}}{B^P}(r){g_t}p\} _{p \in S}}$ will be a covering of $P$ with disjoint Bowen-(t,r) balls in $P$. In particular, \comm{(?)} we can cover $B^P(r)$ with disjoint Bowen-(t,r) balls and it's easy to see that the number of Bowen balls in the cover will be at most:  
\eq{rectangle}{{\frac{{\nu ({B^P}(r))}}{{\nu ({g_{ - t}}{B^P}(r){g_t})}}}}}
Now let $x \in \partial_{r}O^c$ and let us define the following sets:
$$E_{V_r}(t,x):=\{p \in V_r:g_tpx \in O^c\}   $$
 $$
S' := \{ \gamma \in S: g_{-t}V_r \gamma g_t \subset ({E_{V_r}(t,x)})^c\}.
$$
Note that since $B^P(\frac{r}{8 \sqrt{L}}) \subset V_r$, we have:
\eq{Bowen inclusion}{E_{x,\frac{r}{16  \sqrt{L}}} \subset {E_{V_r}(t,x)}.} 
Here is our next observation:
 \begin{lem}
\label{covering of A^P}  ${A}^P(t,{r},{\sigma _{r/2}}{U},x) \subset \bigcup_{\gamma \in S'} {  {g_{ - t}}V_r \gamma {g_t}}$.
\end{lem}

\begin{proof}
For any $\gamma \in \Lambda_r$ and any $p_1,p_2 \in V_r$ 
by \equ{basis1} we have:
\eq{Bowen distance}{
\begin{aligned}
{\dist}({g_{-t}}p_1\gamma g_t,{g_{-t}}p_2\gamma g_t) = {\dist}({g_{-t}}p_1{p_2}^{-1}{g_{ t}},e)
& < 8re^{- \lambda_{\min} t}<8r \cdot r<r .\\
\end{aligned}
}
Also  we have:
\eq{Conjugate Bowen distance}{
\begin{aligned}
{\dist}(g_t({g_{-t}}p_1\gamma g_t ) x,g_t({g_{-t}}p_2\gamma g_t)x) = {\dist}(p_1 \gamma g_tx,p_2 \gamma g_t x) 
& \le {\dist}(p_1,p_2)<r/2.
\end{aligned}
}

Hence if $ \in {A}^P(t,{r},{\sigma _{r/2}}{U},x) \cap g_{-t}V_r \gamma g_t  \ne \varnothing $ for $\gamma \in \Lambda_r$, then by \equ{Bowen distance} and \equ{Conjugate Bowen distance} and in view of ${\partial _{r/2}}{\sigma _{r/2}}{U} \subset U$ we have $g_{-t}V_r \gamma g_t \subset 
{A}^P(t,{\frac{r}{16  \sqrt{L}}},\partial _{r/2}{\sigma _{r/2}}{U},x) \subset {A}^P(t,{\frac{r}{16  \sqrt{L}}},{U},x) \subset ({E_{V_r}(t,x)})^c$.    
 Hence ${  {g_{ - t}}V_r \gamma {g_t}} \subset \bigcup_{\gamma \in S'} {  {g_{ - t}}V_r \gamma {g_t}}$ and we are finished.  
\end{proof}
}

Now let $g_{-t}V_r \gamma g_t$ be one of the Bowen $(t,r)$-boxes in the above cover which has non-empty intersection with ${E_{V_r}(t,{1},x)}$. 
Take any $q={g_{ - t}}{h} \gamma{g_t} \in g_{-t}V_r \gamma g_t$; then
${{g_t}qx = {h} \gamma g_t x}$, hence
${\left\{ {{g_t}qx:q \in g_{-t}V_r \gamma g_t\,} \right\} = \left\{ {h} \gamma g_t x:{h} \in V_r \right\}.}$
Consequently, 
\eq{induction2}{
\{q \in g_{-t}V_r \gamma g_t: g_{2t}qx \notin U\}=g_{-t}{E_{V_r}(t,{1},x)} \gamma g_t.}
Note that since $\diam(V_r)<r$ and $g_{-t}V_r \gamma g_t \cap {E_{V_r}(t,{1},x)}  \ne \varnothing $, we have $\gamma g_t x \in \partial_{r} O^c$. 
Hence, 
by {going through} the same procedure, this time using $\gamma g_t x$ in place of $x$, we can cover the set 
in the left hand side of  \equ{induction2} with at most $N(r,t)$ Bowen $(2t,r)$-boxes in $P$. Therefore, we conclude that the set ${E_{V_r}(t,{2},x)}$ can be covered with at most $N(r,t)^2$ Bowen $(2t,r)$-boxes in $P$. By doing this procedure inductively, we can see that for any $k \in \N$, the set ${E_{V_r}(t,{k},x)}$ can be covered with at most
$N(r,t)^k$ Bowen $(tk,r)$-boxes in $P$. \ignore{Moreover, {using \equ{eigen value} and the Besicovitch covering property of $P$,
it is easy to see that for some ${{K_4}}>0$ only dependent on $L$, any Bowen $(tk,r)$-box in $P$ can be covered with at most  ${{K_4}} \frac{{\nu ({g_{ - tk}}V_r {g_{tk}})}}{{\nu \left({B^P}(re^{ - k {\lambda'}t})\right)}}$
 balls in $P$ of radius $re^{-k {\lambda'}t}$.}}
 \end{proof}}


\section{ENDP and iterations of Margulis inequality}\label{iterations}
Suppose $P$ is a subgroup of $G$ which satisfies (ENDP). Then by the definition one can find  $0<c_0<{1}$ and $t_0>0$ such that the following holds: for any $t \ge t_0$ one can find a height function $u_t$ and $d_t>0$ such that {$u_t$ satisfies the 
$(c_0,d_t)$-Margulis inequality with respect to $I_{{B^P(1),t}}u_t$; that is,}
\eq{mi1}{(I_{{B^P(1),t}}u_t)(x) \le c_0 u_t(x)+d_t.}
{{Let} $t_1 >0$ be sufficiently large so that
\eq{conj1}{g_{-{t}} B^P(r)g_{t}  \subset B^P(r/4) \,\,\,  \text{for all} \,\,\, 0<r\le 1,\, t \ge t_1, }  
and set
{\eq{ts}{t_* := \max(t_0, t_1)}}}

In the following proposition, by using inequality \equ{mi1} 
$N$ times for $t$ sufficiently large, we prove that {$u_t$ satisfies the} $\left(c_0^N, 
 {\frac{d_t}{1-c_0}}\right)$-Margulis inequality with respect to $I_{{B^P(1/2),Nt}}$. {The argument is similar to the proof of \cite[Theorem 15]{SS}.}
\begin{prop} \label{ENDP2}
Let $P$ be a subgroup of $G$ that has property {\rm (ENDP)}, and let $\{ u_t\}_{t>0}$ be the family of height functions in the definition of {\rm (ENDP)}. {Also let $t_*$ be as in \equ{ts}.} Then \ignore{there exists $t_1>0$ such that for}{for any $t \ge t_*$} and any $N \in \N$, {$u_t$ satisfies the $\left(c_0^N, 
 {\frac{d_t}{1-c_0}}\right)$ Margulis inequality with respect to $I_{B^P(1/2),\, Nt}$}. In other words, for any {$t \ge t_*$}, { any $N \in \N$ and any $x\in X$ one has}
\eq{infi}{(I_{B^P(1/2),\, Nt}u_t)(x) \le c_0^N u_t(x)+ {\frac{d_t}{1-c_0}}.}
\end{prop}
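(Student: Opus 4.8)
The plan is to derive \equ{infi} from two separate ingredients, one purely formal and one geometric, the point being that the operator $I_{B^P(1/2),\,Nt}$ is dominated by the $N$-fold composition $(I_{B^P(1),t})^N$.

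\emph{First ingredient: iterating \equ{mi1}.} Since $\nu\big(B^P(1)\big)=1$, the operator $I_{B^P(1),t}$ is positive (it integrates against a nonnegative measure) and fixes constants; hence it is monotone and affine, $I_{B^P(1),t}(c\psi+d)=c\,I_{B^P(1),t}\psi+d$ for $c,d\ge0$. Applying $I_{B^P(1),t}$ to \equ{mi1} repeatedly, always at the same fixed $t\ge t_0$ and with the same height function $u_t$, an immediate induction on $N$ gives
\[
\big((I_{B^P(1),t})^N u_t\big)(x)\ \le\ c_0^{\,N}u_t(x)+d_t\sum_{j=0}^{N-1}c_0^{\,j}\ \le\ c_0^{\,N}u_t(x)+\frac{d_t}{1-c_0}\,.
\]
So it will be enough to show that for every nonnegative $\psi$ on $X$, every $x\in X$, every $N\in\N$ and every $t\ge t_1$ one has $(I_{B^P(1/2),\,Nt}\psi)(x)\le\big((I_{B^P(1),t})^N\psi\big)(x)$; applying this with $\psi=u_t$ and $t\ge t_*$ then yields \equ{infi}, since $t_*=\max(t_0,t_1)$.

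\emph{Second ingredient: unwinding the composition.} I would write $\big((I_{B^P(1),t})^N\psi\big)(x)$ as an integral over $(h_1,\dots,h_N)\in B^P(1)^N$ of $\psi\big(g_th_1g_th_2\cdots g_th_Nx\big)$, and move every factor $g_t$ to the left; a telescoping computation gives
\[
g_th_1g_th_2\cdots g_th_N=g_{Nt}\,(g_{-(N-1)t}h_1g_{(N-1)t})(g_{-(N-2)t}h_2g_{(N-2)t})\cdots(g_{-t}h_{N-1}g_t)\,h_N\,.
\]
Then I would integrate in $h_N$ first, with $h_1,\dots,h_{N-1}$ frozen: substituting $h=c\,h_N$ and using left-invariance of the Haar measure $\nu$ on $P$, the innermost integral becomes $\int_{c\,B^P(1)}\psi(g_{Nt}hx)\,d\nu(h)$, where $c=c(h_1,\dots,h_{N-1}):=(g_{-(N-1)t}h_1g_{(N-1)t})\cdots(g_{-t}h_{N-1}g_t)\in P$.

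\emph{The geometric point and conclusion.} Iterating \equ{conj1} gives $g_{-jt}B^P(1)g_{jt}\subset B^P(4^{-j})$ for all $j\ge1$ and $t\ge t_1$, so each factor of $c$ lies in $B^P(4^{-j})$ for some $j\in\{1,\dots,N-1\}$, and therefore $\dist(c,e)\le\sum_{j\ge1}4^{-j}<\tfrac13$ for every admissible $h_1,\dots,h_{N-1}$. A short estimate with the right-invariant metric then shows $B^P(1/2)\subset c\,B^P(1)$: for $h\in B^P(1/2)$ one has $\dist(c^{-1}h,e)=\dist(c^{-1},h^{-1})\le\dist(c,e)+\dist(h,e)<\tfrac13+\tfrac12<1$. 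Since $\psi\ge0$, the innermost integral over $c\,B^P(1)$ is thus at least $\int_{B^P(1/2)}\psi(g_{Nt}hx)\,d\nu(h)=(I_{B^P(1/2),\,Nt}\psi)(x)$, a quantity not depending on $h_1,\dots,h_{N-1}$; integrating out the remaining variables (whose total $\nu^{\otimes(N-1)}$-mass equals $\nu(B^P(1))^{N-1}=1$) gives $\big((I_{B^P(1),t})^N\psi\big)(x)\ge(I_{B^P(1/2),\,Nt}\psi)(x)$, and combining with the first ingredient proves the proposition. The main obstacle is the bookkeeping in the second step---carrying out the telescoping move of the $g_t$'s correctly and tracking the domains of integration after each conjugation---together with the observation that the specific contraction rate furnished by \equ{conj1} is exactly what makes the translated balls $c\,B^P(1)$ uniformly contain $B^P(1/2)$; everything else is routine.
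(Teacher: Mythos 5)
Your proof is correct and follows essentially the same route as the paper: reduce $I_{B^P(1/2),Nt}$ to the $N$-fold iterate of $I_{B^P(1),t}$, then iterate the Margulis inequality \equ{mi1} with the fixed pair $(t,u_t)$. The only difference is cosmetic: where the paper cites \cite[Lemma~5.5]{GS} (Lemma~\ref{2lem}) to say that the density $\tfrac{d\nu_{N,t}}{d\nu}\ge 1$ on $B^P(1/2)$, you unwind that lemma and prove the underlying inclusion $B^P(1/2)\subset c\,B^P(1)$ directly from \equ{conj1} and right-invariance of the metric, which is exactly the content of that cited result.
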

\smallskip
As a corollary, we get the following crucial statement which will be useful in later sections:
\begin{cor}\label{corh}
Let $P$ be a subgroup of $G$ that has property {\rm (ENDP)}, and {let $t_1$ be as in \equ{conj1}.} Then there exists a height function $u$ and $d>0$ such that for any $0 < c < 1$ one can {find} positive $t_c \ge t_1$ such that for any $ t \in \N t_c$, {$u$ satisfies the $(c,d)$-Margulis inequality with respect to $I_{B^P(1/2),t}$}. In other words, for all $x \in X$ we have
\eq{uin}{(I_{B^P(1/2),\, t}u)(x) \le c u(x)+d.}

\end{cor}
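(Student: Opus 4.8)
The plan is to derive Corollary~\ref{corh} from Proposition~\ref{ENDP2} by freezing the parameter once and for all: we fix $t=t_*$ with $t_*$ as in \equ{ts}, so that the single height function $u:=u_{t_*}$ and the single constant $d:=\frac{d_{t_*}}{1-c_0}$ serve for every $c\in(0,1)$, the price being that we can only control the operator $I_{B^P(1/2),t}$ for $t$ ranging over integer multiples of $t_*$.

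First I would invoke property {\rm (ENDP)}: it furnishes constants $0<c_0<1$ and $t_0>0$ and, for each $t\ge t_0$, a height function $u_t$ together with $d_t>0$ satisfying \equ{mi1}. With $t_1$ as in \equ{conj1}, set $t_*:=\max(t_0,t_1)$ as in \equ{ts}, and put $u:=u_{t_*}$ and $d:=\frac{d_{t_*}}{1-c_0}$; these choices are independent of $c$. Proposition~\ref{ENDP2}, applied with $t=t_*$, then says that for every $N\in\N$ and every $x\in X$,
$$(I_{B^P(1/2),\,Nt_*}u)(x)\le c_0^{N}u(x)+d.$$

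Now, given $0<c<1$, since $c_0\in(0,1)$ we may choose $N_0\in\N$ with $c_0^{N_0}\le c$, and set $t_c:=N_0t_*$; note $t_c\ge t_*\ge t_1$, as required. Any $t\in\N t_c$ has the form $t=mN_0t_*$ for some $m\in\N$, so the displayed inequality with $N=mN_0$ gives, for all $x\in X$,
$$(I_{B^P(1/2),\,t}u)(x)\le c_0^{mN_0}u(x)+d\le c_0^{N_0}u(x)+d\le c\,u(x)+d,$$
where the middle inequality uses $0<c_0^{N_0}<1$, $m\ge 1$ and $u\ge 0$. This is precisely \equ{uin}, which finishes the proof.

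I do not expect a genuine obstacle here; the one point that needs care is purely organizational. One must resist letting $t$ vary freely in Proposition~\ref{ENDP2} (which would force the height function to depend on $t$) and instead commit to $t=t_*$, which is why the conclusion is phrased for $t$ in the discrete set $\N t_c$ rather than for all sufficiently large $t$. Once this is understood, the decay $c_0^{N}\to 0$ as $N\to\infty$ does all the remaining work of beating an arbitrary prescribed $c\in(0,1)$.
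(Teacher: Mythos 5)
Your proposal is correct and follows essentially the same route as the paper's own proof: fix a single time $t_*=\max(t_0,t_1)$, take $u=u_{t_*}$ and $d=\frac{d_{t_*}}{1-c_0}$, apply Proposition~\ref{ENDP2}, and choose $N_0$ with $c_0^{N_0}\le c$ so that $t_c=N_0t_*$ works for all multiples. (If anything, your consistent use of $t_*$ throughout is slightly cleaner than the paper's, which writes $u_{t_1}$ and $d_{t_1}$ but then computes with $nNt_*$.)
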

\begin{proof}
 Let $0 < c < 1$, and take $c_0$ as in Proposition \ref{ENDP2}. Choose $N$ sufficiently large so that $c_0^N  \le c$, and set
 $$u:= u_{t_1}, \, t_c:= {Nt_* { \underset{\equ{ts}}{\ge} t_1}}, \, d:=  {\frac{d_{t_1}}{1-c_0}}.$$
 Now let $t=nt_c=nN {t_*}$ be an element in $\N t_c$. 
 Then, by Proposition \ref{ENDP2} applied {with} $N$ replaced {by} $nN$, we have
 $$ (I_{B^P(1/2),\, t}u)(x)=  (I_{B^P(1/2),\, nNt_1}u)(x) \le c_0^{nN} {u(x)}+ d \le c_0^{N} {u(x)}+d \le c u(x)+d .                       $$
 
 This finishes the proof.
\end{proof}
 
\begin{proof}[Proof of Proposition \ref{ENDP2}]
Given $n \in \N$ and $t>0$, define $\eta_{n,t}: B^P(1)^n \rightarrow P$ by 
\eq{eta}{\begin{aligned}
 \eta_{n,t}(h_1,\dots,h_n)&:=g_{-(n-1)t}h_ng_t\cdots h_2g_th_1\\ &\,\, = \tilde{h}_n \cdots \tilde{h}_1, \,\,\, \text{where}\,\, \tilde{h}_i= g_{-(i-1)t}h_i g_{(i-1)t}. \end{aligned}}  
For any $n \in \N$ and $t>0$, let $\tilde{\nu}_{n,t}$ be the 
{pushforward of $\nu|_{B^P(1)}$ via the conjugation by $g_{nt}$, that is,}
defined by
\eq{nun}{ \int_P \phi (h) \,d \tilde{\nu}_{n,t}(h)=\int_{B^P(1)} \phi(g_{-nt}hg_{nt}) \,d \nu(h)    }
for all $\phi \in C_c(P).$ For any positive integer $n$ let $$\nu_{n,t} := \tilde{\nu}_{n-1,t} \ast \cdots \ast \tilde{\nu}_{1,t} \ast \tilde{\nu}_{0,t}$$ be the
measure on $P$ defined by the $n$ convolutions. It is easy to see that  $\nu_{n,t}$ is absolutely continuous
with respect to $\nu$, and $\nu_{n,t}$ is the pushforward of $(\nu|_{B^P
(1)}
)^{\otimes n}$ by the map $\eta_{n,t}$. 
{These measures were considered in \cite{GS}, and the following was shown:}
\begin{lem}\cite [Lemma 5.5]{GS} \label{2lem}
For all $t \ge t_1$ {as in \equ{conj1}}, all $h \in B^P(1/2)$, and for all $n \in \N$ we have $\frac{d \nu_{n,t}}{d \nu}(h) \ge 1$.  
\end{lem}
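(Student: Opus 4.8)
The plan is to reduce the Radon--Nikodym estimate to an elementary statement about translates of the unit ball $B^P(1)$. First I would apply \equ{conj1} repeatedly---checking that the intermediate radii $4^{-j}$ stay $\le 1$ so that \equ{conj1} keeps applying---to obtain $g_{-it}B^P(1)g_{it}\subseteq B^P(4^{-i})$ for every $i\ge 1$ and every $t\ge t_1$. In view of \equ{nun} this shows that for $i\ge 1$ the measure $\tilde\nu_{i,t}$ is a probability measure (here one uses the normalization $\nu(B^P(1))=1$) supported in $B^P(4^{-i})$, whereas $\tilde\nu_{0,t}=\nu|_{B^P(1)}$. Since the metric on $P$ is right-invariant, the triangle inequality gives $\dist(g_1\cdots g_k,e)\le\sum_j\dist(g_j,e)$, so a product of elements of $B^P(r_1),\dots,B^P(r_k)$ lies in $B^P(r_1+\dots+r_k)$; right-invariance also makes $B^P(1)$ symmetric. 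Using associativity of convolution I would then write $\nu_{n,t}=\mu\ast\nu|_{B^P(1)}$, where $\mu:=\tilde\nu_{n-1,t}\ast\cdots\ast\tilde\nu_{1,t}$ (the point mass at $e$ when $n=1$) is a probability measure supported in $B^P(1/3)$, because $\sum_{i=1}^{n-1}4^{-i}<\sum_{i\ge1}4^{-i}=1/3$.

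Next I would compute the density of $\mu\ast\nu|_{B^P(1)}$ directly. Expanding the convolution, substituting $z=xy$, and using unimodularity of the nilpotent group $P$, one obtains for $\nu$-a.e.\ $z\in P$
\[
\frac{d\nu_{n,t}}{d\nu}(z)=\int_P 1_{B^P(1)}(x^{-1}z)\,d\mu(x)=\mu\big(zB^P(1)\big),
\]
the last equality because $x^{-1}z\in B^P(1)\iff x\in zB^P(1)$ (symmetry of $B^P(1)$ again). Finally, for $h\in B^P(1/2)$ and any $g\in\supp\mu\subseteq B^P(1/3)$ one has $\dist(h^{-1}g,e)\le\dist(h,e)+\dist(g,e)<\tfrac12+\tfrac13<1$, hence $\supp\mu\subseteq hB^P(1)$ and therefore $\frac{d\nu_{n,t}}{d\nu}(h)=\mu\big(hB^P(1)\big)=\mu(P)=1$. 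This gives the lemma, in fact with equality on $B^P(1/2)$.

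I do not expect a genuine obstacle: the argument is bookkeeping organized around the contraction \equ{conj1}. The points that need the most care are the iterated use of \equ{conj1} (tracking that the radii remain in the range where it is valid) and the density identity $\frac{d\nu_{n,t}}{d\nu}(z)=\mu(zB^P(1))$, which relies on unimodularity of $P$ together with symmetry of $B^P(1)$ (itself a consequence of right-invariance of the metric). One should also record explicitly that the normalization $\nu(B^P(1))=1$ is precisely what makes the $\tilde\nu_{i,t}$, and hence $\mu$, probability measures, so that the concluding equality yields $\ge 1$ rather than $\ge\nu(B^P(1))^{n-1}$.
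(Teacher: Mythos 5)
Your argument is correct, and it is the natural (and, I believe, essentially the same) argument as the one in Guan--Shi's Lemma~5.5, which the paper simply cites rather than reproving. The key steps all check out: iterating \equ{conj1} (with the intermediate radii $4^{-j}\le 1$ so the hypothesis of \equ{conj1} stays satisfied) to get $\supp\tilde\nu_{i,t}\subset B^P(4^{-i})$; using right-invariance of the metric both for the ``triangle inequality'' bound on the support of a product and for the symmetry $B^P(1)^{-1}=B^P(1)$; and using unimodularity of the nilpotent group $P$ to justify the change of variables $z=xy$ in the density computation, giving $\frac{d\nu_{n,t}}{d\nu}(z)=\mu\big(zB^P(1)\big)$. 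The normalization $\nu\big(B^P(1)\big)=1$ is indeed what makes each $\tilde\nu_{i,t}$, and hence $\mu$, a probability measure. One tiny remark on exposition: since $\nu_{n,t}=\mu*\nu|_{B^P(1)}$ with $\mu$ a probability measure supported in $B^P(1/3)$, the density is identically $1$ on $B^P(1/2)$, as you note, and it vanishes outside $B^P(4/3)$; this is consistent with $\nu_{n,t}$ being a probability measure because $\nu\big(B^P(1/2)\big)<1$, so there is no contradiction to worry about.
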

\smallskip
 {Using Lemma \ref{2lem}, we have for all $N \in \N$ and all $t \ge t_1$:}
\eq{inthalf}
{\begin{aligned}
(I_{B^P(1/2), Nt} u)(x) &= \int_{B^P(1/2)}u(g_{Nt}hx) \,d \nu(h) \le  \int_{B^P(1/2)}u(g_{Nt}hx) \,d {\nu_{N,t}(h)} \\
 & \le { \int_{B^P(1/2)^{N}} 
u({g_{t}h_{{N}} \cdots g_{t} h_{1}x}) \,d \nu^{\otimes n} (h_1, \dots,h_{N}) }\\
& \le  \int_{B^P(1)^{N}} 
u({g_{t}h_{{N}} \cdots g_{t} h_{1}x}) 
\,d \nu^{\otimes n} (h_1, \dots,h_{N})
\end{aligned}}
{Take {$0<c_0<1$} {and $t_0>0$} as in the definition of (ENDP), and let $t \ge t_*= \max(t_0,t_1)$. Recall that $\nu\left(B^P(1)\right)=1$. Since $t \ge t_0$, we can apply \equ{mi1} and  for any 
{$i=2,\dots$ conclude that}
\eq{second-in}{\begin{aligned}
&
 \int_{B^P(1)^{i}} 
u({g_{t}h_{{i}} \cdots g_{t} h_{1}x}) 
\,d \nu^{\otimes i} (h_1, \dots,h_{i}) \\
& \le   \int_{B^P(1)^{i-1}} 
\left(c_0 \cdot u({g_{t}h_{{i-1}} \cdots g_{t} h_{1}x}) + d_t \right) \,d \nu^{\otimes {i-1}} (h_1, \dots,h_{i-1})\\
& = c_0  \int_{B^P(1)^{i-1}} u({g_{t}h_{{i-1}} \cdots g_{t} h_{1}x})\,d \nu^{\otimes {i-1}} (h_1, \dots,h_{i-1}) + d_t \cdot \nu\left(B^P(1)^{i-1}\right) \\ 
&  =   c_0  \int_{B^P(1)^{i-1}} u({g_{t}h_{{i-1}} \cdots g_{t} h_{1}x})\,d \nu^{\otimes {i-1}} (h_1, \dots,h_{i-1}) + d_t.   
\end{aligned}}
Let $N \in \N$. If $N=1$, then \equ{infi} follows immediately from the combination of \equ{mi1} and \equ{inthalf}. If $N \ge 2$, then by using \equ{second-in} repeatedly and combining with \equ{inthalf} we obtain
\eq{linf}{
\begin{aligned}
 (I_{B^P(1/2), Nt} &u)(x)          
 \le \int_{B^P(1)^{N}} 
u({g_{t}h_{{N}} \cdots g_{t} h_{1}x})\,d \nu^{\otimes n} (h_1, \dots,h_{N}) \\
& \le c _0^{N-1} \int_{B^P(1)} 
u(g_{t} h_{1}x)\,d \nu(h_1) + c_0^{N-2}d_t+ \cdots + c_0 d_t +d_t \\
& \underset{\equ{mi1}}\le  c _0^N u(x) + c_0^{N-1}d_t+ c_0^{N-2}d_t+ \cdots + c_0 d_t +d_t      \\
& < c_0^N u(x)+d_t   (1+c_0+ c_0^2 + \cdots) 
 = c_0^N u(x) + \frac{d_t}{1-c_0} .
\end{aligned}
}
}
This finishes the proof.
\end{proof}

\section{(ENDP) and escape of mass}\label{escape}
Let
$P$ be a subgroup of $G$ that has property (ENDP). Take a height function $u$, and  for $M>0$ define the following sets:
$$X_{> M}:= \{x\in X :u(x)> M\},  \,\,\,\,\, X_{ \le M} :=  \{x\in X :u(x) \le  M\}.$$
Since $u$ is proper, the sets $X_{\le M}$ are compact.  

 Since $u$ is regular, by definition there exists $C \ge 1$ such that 
 {\eq{reg1}{C^{-1}u(x) \le u(gx) \le C u(x) \text{ for all $g \in B(2)$ and }x \in X.}}
Moreover,  it is easy to see {from  \equ {reg1}} that there exists $\alpha>0$ such that for any $t>0$ we have
 \eq{expand}{e^{- \alpha t} u(x) \le u(g_tx) \le e^{\alpha t} u(x)}
Now 
let $0 < c < 1$, take $d$ and $t_c \ge t_1$ as in Corollary \ref{corh}, and let $t \in \N t_c$. Note that \equ{uin} immediately implies that if {$u(x) \ge \frac{d}{c},$} then 
\eq{2ine}{(I_{B^P(1/2),t}u)(x) \le 2c \cdot u(x)  }
Now define
\eq{lt}{\ell_{c,t}:= \max \left(\frac{d}{c}, e^{\alpha t}\right)}
In the following key proposition, we obtain {an} upper bound for the measure of the sets of type $A_x^N\left(kt,\theta,X_{>   C^2 \ell_{c,t}^2}\right)$, where $2 \le k \in \N$, $\theta \in (0,{r_*]}$, and $C$ is as in \equ{reg1}. We will use this measure estimate to derive a covering result for the sets of type $A_x^N\left(kt,\theta,X_{>   C^3 \ell_{c,t}^2}\right)$ in Corollary \ref{fin cor}.
 
\begin{prop}\label{mest}
For any $2 \le k \in \N$, any $\theta \in (0, {r_*]}$, any $N \in \N$, and for any $x \in X$ we have
\eq{main-measure}{\nu \left(A_x^N\big(kt,\theta,X_{>   C^2 \ell_{c,t}^2}\big) \right) \le \left({\frac{4c}{1-c}} \right)^{N}   \frac{\max\big(u(x),d \big)}{\ell_{c,t}^2} }
\end{prop}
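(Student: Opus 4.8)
The plan is to combine a Markov (Chebyshev) inequality with the iteration scheme already used to prove Proposition~\ref{ENDP2}. Write $M:=C^2\ell_{c,t}^2$; since $\ell_{c,t}\ge e^{\alpha t}>1$ we have $\ell_{c,t}^2\ge\ell_{c,t}\ge d/c$, so $M/C=C\ell_{c,t}^2\ge d/c$, and since $t\in\N t_c$ with $k\ge2$ we get $kt\in\N t_c$ and $kt\ge t_1$, so Corollary~\ref{corh} (hence \equ{2ine}) and Lemma~\ref{2lem} apply with $t$ replaced by $kt$. We may assume $A_x^N(kt,\theta,X_{>M})\neq\varnothing$; since this set is contained in $A_x^1(kt,\theta,X_{>M})$, in that case \equ{reg1} and \equ{expand} force $u(x)\ge M/(Ce^{\alpha kt})$, a bound that will be used for the outermost step of the iteration.

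First, a Markov reduction: on $A_x^N(kt,\theta,X_{>M})$ the last iterate lies in $X_{>M}$, so $u(g_{Nkt}hx)>M$, and since $\overline{V_\theta}\subset B^P(\theta/4)\subset B^P(1/2)$,
\[
\nu\big(A_x^N(kt,\theta,X_{>M})\big)\ \le\ \frac1M\int_{B^P(1/2)}\Big(\prod_{\ell=1}^{N-1}\mathbf 1_{X_{>M}}(g_{\ell kt}hx)\Big)u(g_{Nkt}hx)\,d\nu(h).
\]
Second, I would pass to an iterated integral exactly as in the proof of Proposition~\ref{ENDP2}: the integrand above is a nonnegative function of $h$, so by Lemma~\ref{2lem} (with $t$ replaced by $kt$) it does not decrease when $d\nu$ is replaced by $d\nu_{N,kt}$ on $B^P(1/2)$ and then by $d\nu_{N,kt}$ on all of $P$, the latter being the pushforward of $\nu^{\otimes N}$ restricted to $B^P(1)^N$ under $\eta_{N,kt}$. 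A short computation with \equ{eta} gives $g_{Nkt}\eta_{N,kt}(h_1,\dots,h_N)x=g_{kt}h_N\cdots g_{kt}h_1x=:y_N$ and, for $1\le\ell\le N-1$, $g_{\ell kt}\eta_{N,kt}(h_1,\dots,h_N)x=\omega_\ell\, y_\ell$ with $y_\ell:=g_{kt}h_\ell\cdots g_{kt}h_1x$ ($y_0:=x$) and $\omega_\ell:=\eta_{N-\ell,kt}(h_{\ell+1},\dots,h_N)\in B^P(2)$, the inclusion holding because the conjugations in \equ{eta} compound the contraction in \equ{conj1}. Regularity \equ{reg1} then gives $\mathbf 1_{X_{>M}}(\omega_\ell y_\ell)\le\mathbf 1_{X_{>M/C}}(y_\ell)$, so
\[
\nu\big(A_x^N(kt,\theta,X_{>M})\big)\ \le\ \frac1M\int_{B^P(1)^N}\Big(\prod_{\ell=1}^{N-1}\mathbf 1_{X_{>M/C}}(y_\ell)\Big)u(y_N)\,d\nu^{\otimes N}(h_1,\dots,h_N).
\]

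Third, I would collapse this iterated integral from the inside, integrating out $h_N,h_{N-1},\dots,h_1$ one at a time; the integration removing $h_\ell$ produces $(I_{B^P(1),kt}u)(y_{\ell-1})$, to which one applies the Margulis inequality for this operator (obtained by iterating \equ{mi1} as in the computation \equ{linf}). The key point is that for $\ell\ge2$ the factor $\mathbf 1_{X_{>M/C}}(y_{\ell-1})$ is present and $X_{>M/C}\subset X_{>d/c}$, so on its support the additive constant $d$ satisfies $d<c\,u(y_{\ell-1})$ and is absorbed, leaving a multiplicative loss of at most $\tfrac{4c}{1-c}$ per step; for the last integration (removing $h_1$, producing $(I_{B^P(1),kt}u)(x)\le 2c\,u(x)+d$) the recorded lower bound $u(x)\ge M/(Ce^{\alpha kt})$ plays the role of the missing indicator and again lets the additive term be absorbed. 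Collecting the $N$ factors yields $\nu\big(A_x^N(kt,\theta,X_{>M})\big)\le\frac1M\big(\tfrac{4c}{1-c}\big)^N\max(u(x),d)$, and $\frac1M=\frac1{C^2\ell_{c,t}^2}\le\frac1{\ell_{c,t}^2}$ completes the proof.

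The main obstacle I expect is the constant bookkeeping in the third step: one must fix the exponent of $C$ in $M=C^2\ell_{c,t}^2$ and exploit both lower bounds $\ell_{c,t}\ge d/c$ and $\ell_{c,t}\ge e^{\alpha t}$ carefully enough that every point occurring in the collapsed integrals — after all the bounded perturbations $\omega_\ell$ — genuinely lies in $X_{>d/c}$, so that the additive constants from the Margulis inequality are dominated by the contracted main terms and the per-step loss stays below $\tfrac{4c}{1-c}$. A secondary technical point is matching the radius $B^P(1)$ forced by the convolution measure $\nu_{N,kt}$ with the radius $B^P(1/2)$ appearing in Corollary~\ref{corh}; this is handled through the (ENDP) inequality \equ{mi1} together with Proposition~\ref{ENDP2}.
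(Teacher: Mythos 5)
Your overall strategy (Chebyshev inequality, the density Lemma \ref{2lem}, and an iterated Margulis inequality) is the same as the paper's, but you implement it with time step $kt$ and $N$ integration variables, whereas the paper uses time step $t$ and $Nk$ variables grouped into $N$ blocks of length $k$. This difference is not cosmetic: it creates a genuine gap in your third step. After passing to the iterated integral over $B^P(1)^N$, integrating out a single variable $h_\ell$ produces the quantity $(I_{B^P(1),\,kt}\,u)(y_{\ell-1})$, and you need a Margulis inequality for the operator $I_{B^P(1),\,kt}$ acting on the \emph{fixed} height function $u$ of Corollary \ref{corh}. No such inequality is available. The hypothesis \equ{mi1} at time $kt$ controls $I_{B^P(1),\,kt}u_{kt}$ for the $t$-dependent function $u_{kt}$, which need not be comparable to $u=u_{t_1}$; and \equ{uin}/\equ{2ine} at time $kt$ control only $I_{B^P(1/2),\,kt}u$. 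One cannot enlarge $B^P(1/2)$ to $B^P(1)$ here: Lemma \ref{2lem} bounds the density $\tfrac{d\nu_{n,t}}{d\nu}$ from below only on $B^P(1/2)$, so it cannot be used to dominate an integral over $B^P(1)$ by a convolution integral; nor can regularity \equ{reg} help, since for $h\in B^P(1)\setminus B^P(1/2)$ the element $g_{kt}hh'^{-1}g_{-kt}$ is expanded, not bounded. The paper avoids this entirely by collapsing the integral one $g_t$-step at a time, where \equ{mi1} applies directly, and the factor $\tfrac{4c}{1-c}$ per block arises as $2c$ (from the one step constrained by the indicator) times $\tfrac{2}{1-c}$ (accumulated over the $k-1$ unconstrained steps, as in \equ{second-in1}).

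A second, related problem is your treatment of the outermost integration. The lower bound $u(x)\ge M/(Ce^{\alpha kt})=C\ell_{c,t}^2e^{-\alpha kt}$ does not imply $u(x)\ge d/c$ once $k>2$: from \equ{lt} one only gets $\ell_{c,t}^2e^{-\alpha kt}\ge (d/c)\,e^{-\alpha(k-1)t}$, which is far below $d/c$. So the additive constant in the last Margulis step cannot be absorbed the way you propose, and without that absorption your scheme produces only $N-1$ factors of size $O(c)$ rather than $N$, so the bound $\left(\tfrac{4c}{1-c}\right)^N$ is not reached for small $c$ (already at $N=1$ your argument gives no small factor at all, since the indicator product is empty). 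In the paper's block structure this is a non-issue: the first block still contains a constrained step at time $(k-1)t$ (guaranteed by $u(y_k)>C\ell_{c,t}^2$ together with \equ{reg1} and \equ{expand}, which is exactly one application of $e^{\alpha t}\le\ell_{c,t}$, valid for every $k$), and the unconstrained tail down to $x$ is handled by $c^{k-1}u(x)+\tfrac{d}{1-c}\le\tfrac{2}{1-c}\max\big(u(x),d\big)$ with no lower bound on $u(x)$ required. To repair your proof you would have to reintroduce the decomposition of each $g_{kt}$ into $k$ steps of $g_t$, at which point it becomes the paper's argument.
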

\begin{proof}[Proof of Proposition \ref{mest}]
Let ${2 \le k \in \N}$, $N \in \N$ and 
$x \in X$. Define
$$Z_{x}(k,N):=   \left\{(h_1, \dots, h_{Nk}) \in B^P(1/2)^{Nk}: {{u}} (g_th_{nk} \cdots g_t h_1x) > C \ell_{c,t}^2 \,\,\, \forall n \in \{1, \dots,N\}\right\}.    $$
We need the following lemma:
\begin{lem}\label{1lem}
For all $\theta \in (0,{r_*]}$ and for all $h \in A_x^N\big(kt,\theta,X_{>   C^2 \ell_{c,t}^2}\big)$ one has \linebreak $\eta^{-1}_{Nk,t}(h) \subset Z_{x}(k,N)$, where $\eta_{Nk,t}$ is defined as in \equ{eta}.
\end{lem}

{\begin{proof}
Let $\theta \in (0,{r_*]}$ and let $h \in A_x^N\big(kt,\theta,X_{>   C^2 \ell_{c,t}^2}\big)$. Suppose
that $$\eta_{Nk,t}(h_1,\dots, h_{Nk})=h.$$
Then for any $1 \le i \le N$ we have 
{
$$
\begin{aligned}
 \dist(g_{ikt}h, g_t h_{ik} \cdots g_t h_1) &\underset{\equ{eta}}{=}   \dist ({g_{ikt} \tilde{h}_{Nk} \cdots \tilde{h}_{1} } ,  g_{ikt}  \tilde{h}_{ik} \cdots \tilde{h}_{1})
 \\
 &\ \  {=} \ \, \begin{cases}\dist ({g_{ikt} \tilde{h}_{Nk} \cdots \tilde{h}_{ik+1} g_{-ikt}} , e)&\text{ if } i < N,\\ 0&\text{ if } i = N.\end{cases}
 \end{aligned}
 $$
 Moreover, if $i < N$ one has $$
\begin{aligned}
\dist ({g_{ikt} \tilde{h}_{Nk} \cdots \tilde{h}_{ik+1} g_{-ikt}} , e)& \ \  \le  \ \, \dist(g_{ikt}\tilde{h}_{ik+1} g_{-ikt},e) + \cdots + \dist(g_{ikt}\tilde{h}_{Nk} g_{-ikt}, e) \\
& \underset{\equ{eta}}{=}\dist(h_{ik+1},e)+ \dist (g_{-t}h_{ik+2}g_t, e)+ \dist(g_{-2t}h_{ik+3}g_{2t}, e)\\
& \ \  {+} \ \,  \cdots +\,    \dist (g_{-((N-i)k-1)t}h_{Nk}g_{((N-i)k-1)t}, e) \\
& \underset{\equ{conj1}}\le  1+ \frac{1}{4}+ \frac{1}{4^2}+\cdots \frac{1}{4^{(N-i)k-1}} < 2.
\end{aligned}$$}
Hence, {in view of  \equ{reg1}}, for any $1 \le i \le N$, $g_{ikt}h{x} \in X_{>   C^2 \ell_{c,t}^2}$ implies {that} $ g_t h_{ik} \cdots g_t h_1{x}  \in X_{>   C \ell_{c,t}^2}$. This finishes the proof.
\end{proof}}
 
Now let $\theta \in (0,{r_*]}$. {Note that in view of \equ{Bowen inc} we have $\overline{V_\theta} \subset {B^P(r_*/2)} \subset B^P(1/2)$; moreover, $kt \ge  k t_c \ge t_1$. Thus,} by Lemma \ref{1lem} and Lemma \ref{2lem} we have:
\eq{measure-in}{\begin{aligned}\nu \left(A_x^N\big(kt,\theta,X_{>    C^2 \ell_{c,t}^2}\big) \right) &\le \nu_{Nk, \, t} \left(A_x^N\big(kt,\theta,X_{>   C^2 \ell_{c,t}^2}\big) \right)\\ & \le \nu^{\otimes Nk}\big(Z_{x}(k,N)\big),\end{aligned}}
where $\nu_{Nk,\, t}$ is defined as in \equ{nun}. So it suffices to estimate $\nu^{\otimes Nk}\big(Z_{x}(k,N)\big)$.
{Define}
$$s(k,N,x):=\int_{Z_{x}(k,N)}  u(g_th_{Nk} \cdots g_th_1x) \,d \nu^{\otimes Nk} (h_1, \dots,h_{Nk}).      $$ 
Since $\ell_{c,t} \ge e^{\alpha t}$,
 in view of \equ{reg1} and \equ{expand} {we have   $u(g_th_{k-1} \cdots g_th_{{1}}x) > \ell_{c,t}$ whenever $(h_1, \dots, h_{k}) \in Z_x(k,1)$}. Hence,
\eq{first-in}{\begin{aligned}
 s(k,1,x)
&\le \int_{B^P(1/2)^{k-1}} 1_{
{X_{> \ell_{c,t}}}
}(g_{t}h_{{k-1}} \cdots g_t h_{1}x)  u(g_th_{k} \cdots g_th_{{1}}x)  \,d \nu^{\otimes k-1} (h_1, \dots,h_{k-1})  \\
& \le 2 c \int_{B^P(1/2)^{k-1}} 
u({g_{t}h_{{k-1}} \cdots g_t h_{1}x})   \,d \nu^{\otimes k-1} (h_1, \dots,h_{k-1}),
\end{aligned}}
{where the second inequality  follows from \equ{2ine} applied with $x$ replaced {by} $g_th_{k-1} \cdots g_th_1x$, and {from} the fact that $\ell_{c,t} \ge {\frac d c}$.}

{Again recall that  $\nu\left(B^P(1)\right)=1$. By applying \equ{uin} we get:
$${\begin{aligned}
&
 \int_{B^P(1/2)^{k-1}} 
u({g_{t}h_{{k-1}} \cdots g_t h_{1}x}) 
\,d \nu^{\otimes k-1} (h_1, \cdots,h_{k-1}) \\
&  \le c  \int_{B^P(1/2)^{k-2}} 
u({g_{t}h_{{k-2}} \cdots g_t h_{1}x}) 
\,d \nu^{\otimes k-2} (h_1, \cdots,h_{k-2})   + d \cdot \nu \left( {B^P(1/2)}^{k-2} \right)                             \\
& \le c  \int_{B^P(1/2)^{k-2}} 
u({g_{t}h_{{k-2}} \cdots g_t h_{1}x}) 
\,d \nu^{\otimes k-2} (h_1, \cdots,h_{k-2})   + d. 
\end{aligned}
}$$
Therefore, if we apply \equ{uin} repeatedly, similarly to \equ{linf} we get
\eq{second-in1}{\begin{aligned}
&  \int_{B^P(1/2)^{k-1}} 
u({g_{t}h_{{k-1}} \cdots g_t h_{1}x}) 
\,d \nu^{\otimes k-1} (h_1, \cdots,h_{k-1}) \\
& \le c^{k-1} u(x)+\frac{d}{1-c}   \le \frac{2}{1-c} \cdot \max\big(u(x),d\big).
\end{aligned}}
}
So by combining \equ{first-in} and \equ{second-in1} we have:
\eq{mainind}{s(k,1,x) \le   {\frac{4c}{1-c}} \cdot \max\big(u(x),d \big) \,\,\, \text{for all } x \in X.}
Note that since $\ell_{c,t} \ge e^{\alpha t},$ in view of {\equ{reg1}, \equ{expand}} and \equ{lt} 
\eq{implic}{(h_1, \dots,h_{ik}) \in Z_x(k,i) \Rightarrow u(g_th_{(i-1)k} \cdots g_th_{{1}}x) \ge \ell_{c,t} \ge \frac{d}{c}  \ge d.}
Now for any $2 \le i \in \N$ we can write
\begin{equation*}
{\begin{aligned}
& s(k,i,x)= \int_{Z_x(k,i)}u(g_th_{ik} \cdots g_th_{{1}}x) \,d \nu^{\otimes ik} (h_1, \dots,h_{ik}) \\
& = \int_{Z_x(k,i-1)} \int_{Z_{g_th_{(i-1)k} \cdots g_th_{{1}}x}(k,1)}u(g_th_{ik} \cdots g_th_{{1}}x)\,d \nu^{\otimes k} (h_{(i-1)k+1}, \dots,h_{ik})\,d \nu^{\otimes (i-1)k} (h_{1}, \dots,h_{(i-1)k}) \\
& = \int_{Z_x(k,i-1)} s(k,1,g_th_{(i-1)k} \cdots g_th_{{1}}x) \,d \nu^{\otimes (i-1)k} (h_{1}, \dots,h_{(i-1)k}) \\
& \underset{\equ{mainind}, \, \equ{implic}}\le  \int_{Z_x(k,i-1)} {\frac{4c}{1-c}} \cdot u(g_th_{(i-1)k} \cdots g_th_{{1}}x)\,d \nu^{\otimes (i-1)k} (h_{1}, \dots,h_{(i-1)k}) \\
& ={\frac{4c}{1-c}}  \cdot s(k,i-1,x)
\end{aligned}}
\end{equation*}
 Thus, {by repeatedly using  
 the above computation,  for any $N \in \N$ we conclude that}
 $$
  s(k,N,x) \le \left({\frac{4c}{1-c}} \right)^{N-1} s(k,1,x)  \underset{\equ{mainind}}\le  \left({\frac{4c}{1-c}} \right)^{N} \max\big(u(x), d\big).
 $$
 Note that $s(k,N,x) \ge \ell_{c,t}^2 \cdot \nu^{\otimes Nk}\big(Z_x(k,N)\big)$. Hence \equ{main-measure} follows from the above inequality and \equ{measure-in}.
\end{proof}
As a corollary, we get the following crucial covering result:
\begin{cor}\label{fin cor}
Let $P$ be a subgroup of $G$ {with property} {\rm (ENDP)}. 
Then  for any $0 < c < 1$ there exists $t_c>0$ such that {{for} all $t \in \N t_c$ and $2 \le k \in \N$ satisfying {$kt \ge \frac{\log (8 \sqrt{p})}{\lambda_{\min}}$},{ all $\theta \in {(0,r_*/2]}$,}   all $N \in \N$,  and for all  $x \in X$,} the set 
$${A_x^N\big(kt,\theta,X_{>   C^3  \ell_{c,t}^2}\big) = \left\{  h \in \overline{V_\theta}:u(g_{ikt} hx) > C^3  \ell_{c,t}^2 \,\,\, \forall\, i \in \{1,\dots, N  \} \right\}}$$ 
can be covered with at most 
$$\frac{{e^{\delta Nkt} }}{{\nu({V_\theta}) }}  {\left({\frac{4c}{1-c}}\right)^{N}}  \frac{\max\big(u(x),d \big)}{\ell_{c,t}^2}$$
Bowen $(Nkt, \theta)$-balls in $P$.  
\end{cor}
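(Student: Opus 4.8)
The plan is to upgrade the measure estimate of Proposition~\ref{mest} to a covering statement by the same tessellation argument used in the proof of Theorem~\ref{main cor}. Fix $0<c<1$, let $u$, $d$ and $t_c$ be as in Corollary~\ref{corh}, fix $t\in\N t_c$, an integer $k\ge 2$ with $kt\ge\frac{\log(8\sqrt p)}{\lambda_{\min}}$, $\theta\in(0,r_*/2]$, $N\in\N$ and $x\in X$, and abbreviate $\ell:=\ell_{c,t}$. First I would cover the set $A_x^N(kt,\theta,X_{>C^3\ell^2})\subset\overline{V_\theta}$ by the family of Bowen $(Nkt,\theta)$-boxes $\{R_\gamma:=g_{-Nkt}\overline{V_\theta}\gamma g_{Nkt}:\gamma\in\Lambda_\theta\}$. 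Since conjugation by $g_{Nkt}$ is an automorphism of $P$ carrying the tessellation $(V_\theta,\Lambda_\theta)$ to $(g_{-Nkt}V_\theta g_{Nkt},\,g_{-Nkt}\Lambda_\theta g_{Nkt})$, the interiors of the $R_\gamma$ are pairwise disjoint with null boundaries, and $\nu(R_\gamma)=e^{-\delta Nkt}\nu(V_\theta)$ by~\equ{delta}. It then suffices to bound the number of $\gamma\in\Lambda_\theta$ for which $R_\gamma\cap A_x^N(kt,\theta,X_{>C^3\ell^2})\ne\varnothing$.

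For the key step I would show that any such $R_\gamma$ is contained in the set $\{h\in P: u(g_{ikt}hx)>C^2\ell^2\ \text{ for } i=1,\dots,N\}$. Pick $h_0\in R_\gamma$ with $u(g_{ikt}h_0x)>C^3\ell^2$ for all $i$, and let $h\in R_\gamma$ be arbitrary; writing $h=g_{-Nkt}h'\gamma g_{Nkt}$ and $h_0=g_{-Nkt}h_0'\gamma g_{Nkt}$ with $h',h_0'\in\overline{V_\theta}$, one computes $g_{ikt}hx=w_i\cdot g_{ikt}h_0x$, where $w_i:=g_{-(N-i)kt}\,h'(h_0')^{-1}\,g_{(N-i)kt}$. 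Since $h'(h_0')^{-1}\in B^P(\theta/2)\subset H$ and conjugation by $g_{-(N-i)kt}$ with $(N-i)kt\ge 0$ is non-expanding on $P\subset H$ (cf.~\equ{diam}), we get $\dist(w_i,e)\le\theta/2<2$, so~\equ{reg1} gives $u(g_{ikt}hx)\ge C^{-1}u(g_{ikt}h_0x)>C^2\ell^2$, as desired.

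To finish, I would observe that $kt\ge\frac{\log(8\sqrt p)}{\lambda_{\min}}$ and~\equ{diam} force $\diam(R_\gamma)\le\frac{\theta}{2}e^{-\lambda_{\min}Nkt}\le\frac{\theta}{16\sqrt p}$; together with $R_\gamma\cap\overline{V_\theta}\ne\varnothing$ and the $2$-bi-Lipschitz property of $\exp$ (exactly as in the proof of Theorem~\ref{main cor}) this yields $R_\gamma\subset\partial_{\theta/(16\sqrt p)}\overline{V_\theta}\subset\overline{V_{2\theta}}$. Hence the union of the boxes $R_\gamma$ meeting $A_x^N(kt,\theta,X_{>C^3\ell^2})$ is contained in $A_x^N(kt,2\theta,X_{>C^2\ell^2})$, and since $2\theta\le r_*$ Proposition~\ref{mest} applies to give
\[
\#\{\gamma\}\cdot e^{-\delta Nkt}\nu(V_\theta)\;=\;\sum_\gamma\nu(R_\gamma)\;\le\;\nu\big(A_x^N(kt,2\theta,X_{>C^2\ell^2})\big)\;\le\;\Big(\tfrac{4c}{1-c}\Big)^{N}\frac{\max(u(x),d)}{\ell^2}.
\]
Dividing by $e^{-\delta Nkt}\nu(V_\theta)$ gives the asserted bound $\frac{e^{\delta Nkt}}{\nu(V_\theta)}\big(\tfrac{4c}{1-c}\big)^N\frac{\max(u(x),d)}{\ell^2}$ on the number of Bowen $(Nkt,\theta)$-boxes, as these boxes cover $A_x^N(kt,\theta,X_{>C^3\ell^2})$.

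I expect the only delicate point to be the transport step in the second paragraph: one must ensure the perturbation $w_i$ relating $g_{ikt}hx$ to $g_{ikt}h_0x$ lies in the regularity neighborhood of $u$ uniformly in $i$ and $N$. This works precisely because the Bowen boxes are taken at the terminal time $Nkt$, so that the conjugating elements $g_{-(N-i)kt}$ all have non-negative exponent and hence act as contractions on $P\subset H$; boxes at an intermediate time would make some of these conjugations expanding and the argument would break. Everything else is routine: the measure input is Proposition~\ref{mest}, and the conversion from a measure bound to a count of essentially disjoint Bowen boxes in a tessellation is the same as in Theorem~\ref{main cor}.
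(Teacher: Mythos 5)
Your proof is correct and follows essentially the same route as the paper's: cover by Bowen $(Nkt,\theta)$-boxes, use the $2$-bi-Lipschitz property and the diameter bound \equ{diam} to place any box meeting the set inside $V_{2\theta}$, transport the height bound from one point of the box to the whole box via the contraction of $P$ under $g_{-s}$ for $s\ge 0$ together with the regularity estimate \equ{reg1}, and finally divide the measure bound of Proposition~\ref{mest} (applied with $2\theta$) by $\nu(g_{-Nkt}V_\theta g_{Nkt})=e^{-\delta Nkt}\nu(V_\theta)$. The only cosmetic difference is that you anchor the transport step at the intersection point itself, while the paper anchors it at the center of the box and compares both to the intersection point; the estimates are the same.
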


\begin{proof}
{Let $0 < c < 1$, take $t_c$ as in Corollary \ref{corh}, {and let $t \in \N t_c$ and $2 \le k \in \N$ be such that {$kt \ge \frac{\log(8 \sqrt{p})}{\lambda_{\min}}$.} Also let {$\theta \in (0,{r_*/2]}$,} $N \in \N$,  and   $x \in X$.}}
Take a covering of 
{$ \overline{V_\theta}$} with Bowen  $(Nkt, \theta)$-boxes in 
{$ P$}. Now let {$R$} be one of the Bowen boxes in this cover which has non-empty intersection with $
{A_x^N\big(kt,\theta,X_{>   C^3  \ell_{c,t}^2}\big)} $. {Note that in view of \equ{diam}, we have 
$$\diam(R) \le \frac{\theta}{2} e^{-\lambda_{\min} Nkt} \le \frac{\theta}{2} e^{-\lambda_{\min} kt} \le \frac{\theta}{16 \sqrt{p}}.$$
So, since $R \cap \overline{V_\theta} \neq \varnothing$, we must have
\eq{R-inc}{R \subset \partial_{\frac{\theta}{16 \sqrt{p}}} \overline{V_\theta} \subset {V_{2 \theta}},}
where in the last inclusion we use the $2$-bi-Lipschitz property of $\exp$.}

{Now} let ${h} \in {{R} \cap 
A_x^N\big(kt,\theta,X_{>   C^3  \ell_{c,t}^2}\big)} $. Then {$$u(g_{ikt}{h}x)> C^3  \ell_{c,t}^2 \text{   for all }1 \le i \le N.$$} {On the other hand},  {if we denote the center of $R$ by $h_0$, then for all $1 \le i \le N$ we have for all $h' \in R$:
$${
\begin{aligned}
g_{ikt}h' x 
&  = \left(g_{ikt}h'h_0^{-1}g_{-ikt}\right)g_{ikt}h_0x \\
& \in \left(g_{-(N-i)kt}\overline{V_\theta} g_{(N-i)kt}\right) g_{ikt}h_0 {x}\\
& \underset{\equ{diam}}{\in} B \left( {\frac {\theta}{2} \cdot e^{-\lambda_{\min} (N-i)kt}} \right)g_{ikt}h_0x \\
& \subset B ( {\theta}/{2})g_{ikt}h_0x \subset B ( 1/2)g_{ikt}h_0x
\end{aligned}}$$}
{This implies that
\eq{h0eq}{g_{ikt}Rx \subset B(1) g_{ikt}hx  \text{   for all }1 \le i \le N               }} 
Now in view of \equ{R-inc} and {\equ{h0eq}} we can conclude that
\eq{cube inc}{{R} \subset  
{A_x^N\big(kt,{2\theta},X_{>   C^2  \ell_{c,t}^2}\big).}}
{Therefore, by \equ{main-measure} and \equ{cube inc} applied with $\theta$ replaced with $2 \theta$}, the set $
{A_x^N\big(kt,{ \theta},X_{>   C^3  \ell_{c,t}^2}\big)}$ can be covered with at most  
$$ \begin{aligned}
\frac{{\nu}{\left(
A_x^N\big(kt,{2\theta},X_{>   {C^2}  \ell_{c,t}^2}\big)\right)}}{\nu(g_{-Nkt} {V_{{ \theta}}} g_{Nkt})} 
& \le \frac{({\frac{4c}{1-c}})^{N} \max\big(u(x),d\big)}{\nu(g_{-Nkt}{V_\theta} g_{Nkt}) \cdot \ell_{c,t}^2} \\
& = {\frac{{e^{\delta Nkt} }}{{\nu({V_\theta}) }}   {\left({\frac{4c}{1-c}}\right)^{N}}   \frac{\max\big(u(x),d \big)}{\ell_{c,t}^2}} 
\end{aligned}$$
Bowen  $(Nkt, \theta)$-boxes in 
{$ P$}. This finishes the proof.
\end{proof}







\ignore{\begin{prop}
 Let ${F^+}$ be a one-parameter {$\Ad$-}diagonalizable
{sub}semigroup of $G$, and $P$ a subgroup of $G$ with property (EEP). Then there exist positive constants {$a,b,K_0,K_1,K_2$ and
$\lambda_1$} 
such that 
for any subset $U$ of $X$ whose complement is compact, 
any $0<r<\min \big(r_0(\partial_{1/2} O^c),{r_*'} \big)$,
any $x\in \partial_{
r}{O^c}$, 
 {$k\in\N$ and any
{\eq{t estimate}{t> a +b \log \frac{1}{r},}}}
the set ${A}^P\Big(t,{r},{O^c},{k},x\Big)$ can be covered with at most
$$
K_0 r^{L}{e^{Lk \lambda't}} \left(1 - K_1 \mu (\sigma_rU) +\frac{K_2 e^{- \lambda_1 t}}{r^L}  \right)^k$$
balls in $P$ of radius $re^{-k\lambda't}$ in $H$
\end{prop}
\begin{proof}
Let $N \in \N$ and let $B$ a ball of radius $e^{-\lambda' Nt}$ in $P$ which has non-empty intersection with ${A}^P\Big(t,{r},{O^c},{k},x\Big)$. Since $r< s/4$, it is easy to see that we can find at least $C_4 ({{\frac{2}{r}}})^L$ balls of radius $re^{-\lambda' Nt}$ that will lie entirely inside $B$; moreover since $B \cap {A}^P\Big(t,{r},{O^c},{k},x\Big)$, all of these balls have non-empty intersection with the set $ {A}^P\Big(t,{r},\partial_1 {O^c},{k},x\Big) $.  lie entirely inside the set $ {A}^P\Big(t,{r},\partial_1 {O^c},{k},x\Big) $. Moreover, 
\end{proof}
\begin{prop} \label{exponential mixing} There exist constants $a,b,E', \lambda ', r_6> 0$ such that for any \linebreak $0<r<r_0:=\min (r_0(\partial_1 O^c),r_6)$, any $x \in \partial_{r}O^c$, and any { $t
> a + b \log \frac{1}{r}$}
we have:
{\eq{conclusion}{{\Leb} \left({A}^H\big(t,{{r}},{\partial _{r/2}}{O^c},1,x \big)\right) \le {\Leb}\left(B^H\Big(r \Big)\right)(1-\mu ({\sigma _{r}}U) ) + E'{e^{ - \lambda 't}}.}}
\end{prop}
\ignore{\begin{cor}
There exist positive constants $a', \lambda'',E''>0$ , such that when $t=a'+\frac{1}{\lambda''}\log \frac{1}{\nu(B^H(r))(1-\mu(\sigma_r U))}$, for any  $x \in  \partial_{r}O^c$  we have
     $$\nu \left({A}^H\big(t,{{r}},{\partial _{r/2}}{O^c},1,x\big)\right) \le E'' e^{- \lambda'' t}.$$
\end{cor}
\begin{proof}
Put $\lambda''= \min \{ \frac{1}{b}, \lambda'}   \}$. 
Note that for some positive constant $C_1$ independent of $r$ we have:
$$ \nu(B^H (r) ) \le C_1 r^{mn}.$$
Now consider $t= (a+\frac{\log C_1}{\lambda ''})+ \frac{1}{\lambda ''}\log \frac{1}{\nu(B^H(r))(1-\mu(\sigma_r U))}. $ It is easy to see that, in view of the above inequality, $t> a +b \log \frac{1}{r}$ and 
\eq{t}{\nu\left(B^H \Big(r \Big)\right)(1-\mu ({\sigma _{r}}U) )\le e^{\lambda''(a+ \frac{\log C_1}{\lambda ''})} e^{- \lambda''t}.} Therefore, \equ{conclusion} and \equ{t} imply that:
$$\nu \left({A}^H\big(t,{{r}},{\partial _{r/2}}{O^c},1,x \big)\right) \le \max(e^{\lambda' (a+ \frac{\log C_1}{\lambda '})}, E') e^{- \lambda'' t}.$$
This finishes the proof.
\end{proof}
\begin{prop}
For any $0<r< r_0$, when  any $N \in \N$, and for any $x \in \partial_r O^c$, when   $t=a'+\frac{1}{\lambda''}\log \frac{1}{\nu(B^H(r))(1-\mu(\sigma_r U))}$, then for any $x \in \partial_r O^c$ and for any $N \in \N$, the set ${A}^H\big(t,{{r}},{{O^c},N,x\big)\right$ can be covered with at most $C_4 {\nu(B^H(r)})^{-N}e^{(mn(m+n)- \lambda'')Nt} $ 
balls of radius $\frac{r}{4} e^{-(m+n)N t}$ in $H$, where $C_4$ is a positive constant that is independent of $t,N$ and $r$.

\end{prop}
\begin{proof}
First let $N=1$. Suppose that we have a cover of the set ${A}^H\big(t,{{r}},{{O^c},1,x\big)\right$ and suppose that $B$ is one of the balls in the cover that has non-empty intersection with ${A}^H\big(t,{{r}},{{O^c},1,x\big)\right$, and let $h$ be a point in the intersection. Then by the assumption, we have $g^khx \in O^c$. Moreover, if we denote the center of $B$ with $h_0$, for any $h' \in B$ we have $g^kh'x \in \partial_{r/2}O^c$ and:
$$g^kh'x=g^kh'{h_0}^{-1}g^{-t}g^kh_0x.$

\end{proof}
}}}}


\ignore{
\begin{proof}
{
{Let us fix $k$, $t$, $N$ and $x$ as in the statement of the proposition; the sets defined in the course of the proof will depend on these parameters.}   
 Given $M>0$, let us define 
 $${Z_{M}:=\left\{(s_1,\dots,s_k) \in  {B^P(1)}^k:u (g_{t}h_{s_{k-1}} \cdots g_t h_{s_1}x) > M \right\}.}$$
 Now let {$M \ge {C_\alpha}e^\frac{mnt}{2} \ge C_\alpha \frac{{e}^{t/2}}{c_0}$, where {$c_0$ is as in Proposition \ref{main in}}.} 
Then we can write
{\eq{first-in}{\begin{aligned}
&\int \cdots \int_{Z_{C_\alpha^{-1}M}}u(g_th_{s_k} \cdots g_th_{s_{1}}x) \,d {\rho}_1(s_k) \cdots d {\rho}_1(s_1)   \\
& {=} \ignore{\le} \int \cdots \int_{(M_{m,n})^{k-1}} 1_{
{X_{> M/{C_\alpha}}}
}(g_{t}h_{s_{k-1}} \cdots g_t h_{s_1}x)\cdot I_{g_{t}h_{s_{k-1}} \cdots g_t h_{s_1}x,t} (u)  \,d {\rho}_1(s_{k-1}) \cdots d {\rho}_1(s_1) \\
& \le \int \cdots \int_{(M_{m,n})^{k-1}} 1_{
{X_{> {{e}^{t/2}}/{c_0}}}
}(g_{t}h_{s_{k-1}} \cdots g_t h_{s_1}x)\cdot I_{g_{t}h_{s_{k-1}} \cdots g_t h_{s_1}x,t} (u)  \,d {\rho}_1(s_{k-1}) \cdots d {\rho}_1(s_1) \\
& \underset{\equ{unit in 2} }\le 4c_0 {e}^{-\frac{t}{2}} \int \cdots \int_{(M_{m,n})^{k-1}} 
u({g_{t}h_{s_{k-1}} \cdots g_t h_{s_1}x})   \,d {\rho}_1(s_{k-1}) \cdots d {\rho}_1(s_1).
\end{aligned}}
Next, by using \equ{unit in} $(k-1)$ times we get:
\eq{second-in}{\begin{aligned}
&
 \int \cdots \int_{(M_{m,n})^{k-1}} 
u({g_{t}h_{s_{k-1}} \cdots g_t h_{s_1}x}) 
\,d {\rho}_1(s_{k-1}) \cdots d {\rho}_1(s_1) \\
&
 \le (2c_0 {e}^{-\frac{t}{2}})^{k-2} u(x)+2\big((2c_0 {e}^{-\frac{t}{2}})^{k-2}+\cdots +1\big)
\\
&
 \le (2c_0 )^{k-2} u(x)+2(k-2)(2c_0 )^{k-2}\le 4(k-1)(2c_0 )^{k-2} \max\big(u(x),e^{\eta t}\big).
\end{aligned}
}
So by combining \equ{first-in} and \equ{second-in} we have:
$$
\int \cdots \int_{Z_{C_\alpha^{-1}M}}u(g_th_{s_k} \cdots g_th_{s_{1}}x) \,d {\rho}_1(s_k) \cdots d {\rho}_1(s_1)  
 \le 8(k-1)(2c_0)^{k-1} {e}^{- \frac{t}{2} } \max\big(u(x),1\big).
$$
}

{Now define the function $\phi: B^P(1)^{k}\to M_{m,n}$ by
\eq{defphi}{{\phi(s_1,\dots, s_k):=\sum_{j=1}^k {e}^{-(m+n)(j-1)t}s_j  }.}
Note that \eq{phi}{g_th_{s_k} \cdots g_th_{s_{1}}= g_{kt}h_{\phi(s_1,\dots,s_k)}}
We will need the following observation:

To convert the above multiple integral to a single integral, we will use the following}
{\
\ignore{\begin{proof}
 {For simplicity, assume that $mn=1$; the proof for the case $mn>1$ is similar. Let $f$ be a positive measurable function on $\R$ and let $\vre$ and  $\delta$ be as in \equ{epsdelta}. 
{For convenience denote $\sigma := \sqrt{1+ \delta^2}$.} Consider the {change of variables} 
 $$(z,v) = \Phi(x,y):=\left(\vre x +y,\frac{x}{\sigma}-\vre \sigma y\right),$$ or, equivalently \eq{v}{x= \frac{\sigma (v+\vre \sigma z)}{1+\vre^2{\sigma^2}},\quad y=\frac{z-\vre \sigma v}{1+\vre^2{\sigma^2}}.} 
It is easy to verify that 
 \eq{jac}{\left|\frac{\partial(z,v)}{\partial(x,y)}\right| = \frac{1+\vre^2{\sigma^2}}\sigma}
 and 
 \eq{squares}{\frac{x^2}{\sigma^2} + y^2 = \frac{z^2 + v^2}{1+\vre^2{\sigma^2}}.}
  \ignore{Thus, the inequalities $-1 \le x \le 1$ and $-1 \le y \le 1$ take the form:
 \eq{v1}{
 \begin{aligned}
- \frac{1+\vre^2{\sigma^2}}{{\sigma}}-\vre \sigma z \le\ &v\le \frac{1+\vre^2{\sigma^2}}{{\sigma}}-\vre \sigma z,\\
  \frac{z - (1+\vre^2{\sigma^2})}{\vre \sigma}\le\ &v \le \frac{z + 1+\vre^2{\sigma^2}}{\vre \sigma}.
 \end{aligned}}
 We also need the following lemma:
 \begin{lem} \label{v2}
 For any $0 \le z \le 1$, if $0 \le v \le \frac{1}{4}$ then $v $ satisfies \equ{v1}. Similarly, for any $-1 \le z \le 0$, if $-\frac{1}{4} \le v \le 0$ then  $v $ satisfies \equ{v1}.
 \end{lem}
 \begin{proof}
  Assume that $0 \le z \le 1.$ The proof for the case $-1 \le z \le0$ is similar. Since $z \le 1$, it is easy to see that if the following inequalities are satisfied
  \eq{v2}
  {\begin{aligned}
  & \frac{-(1+\vre^2{\sigma^2})}{{\sigma}} \le v \le \frac{(1+\vre^2{\sigma^2})}{{\sigma}}-\vre \sigma \\
  & -\vre \sqrt{1+ \delta^2} \le v \le \frac{(1+\vre^2{\sigma^2})}{\vre \sigma}
  \end{aligned}
  }
  then inequalities \equ{v1} are satisfied as well. Also it is easy to see that since $\vre <1$, the right-hand side of the first inequality in \equ{v2} is less than the right-hand side of the second inequality. Therefore, if $0\le v \le \frac{(1+\vre^2{\sigma^2})}{{\sigma}}-\vre \sigma$ then \equ{v1} will be satisfied. Finally, note that since $\delta <1$ and $\vre \le 1/8$, we have $\frac{(1+\vre^2{\sigma^2})}{{\sigma}}-\vre \sigma \ge \frac{1}{4}$. Thus, if $0 \le v \le \frac{1}{4}$ then \equ{v1} is satisfied and this finishes the proof.
 \end{proof}}
Denote 
$$\mathcal{D}:= \{(z,v): |z|\le 1,\ |v|\le 1/4, \ zv \ge 0\}.$$
 It readily follows from \equ{v} that \eq{inD}{(z,v)\in \mathcal{D}\quad\Longrightarrow\quad -1 \le x \le 1\text{ and }-1 \le y \le 1.}
Therefore
$${
 \begin{aligned}
  \int_{-1}^1 \int_{-1}^1 f(\vre x +y) \,d \rho_{\sigma^2}(x) \,d \rho_1(y) 
 =\ &\frac1{2\pi\sigma} \int_{-1}^1 \int_{-1}^1 f(\vre x +y) e^{-\left(\frac{x^2}{2\sigma^2}+\frac{y^2}{2}\right)}\,dx\,dy  \\
  \underset{\equ{jac},\,\equ{squares}}=\ &\frac1{2\pi(1+\vre^2{\sigma^2})}
  \iint_{\Phi([-1,1]^2)} f(z) e^{-\frac{z^2+v^2}{2{(1+\vre^2{\sigma^2})}}}\,dz\,dv \\
  \underset{\equ{inD}}\ge \ &\frac1{2\pi(1+\vre^2{\sigma^2})}
  \iint_{\mathcal{D}} f(z) e^{-\frac{z^2+v^2}{2{(1+\vre^2{\sigma^2})}}}\,dz\,dv\\
  = &\rho_{1+ \vre^2\sigma^2}\left([0,{1}/{4}]\right) \cdot
  \int_{-1}^1 f(z) 
  \,d\rho_{1+ \vre^2\sigma^2}(z)\\
  \underset{\equ{epsdelta}}\ge &\rho_{33/32}\left([0,{1}/{4}]\right) \cdot
  \int_{-1}^1 f(z) 
  \,d\rho_{1+ \vre^2\sigma^2}(z).
 \end{aligned}
}$$
}
 \end{proof}}}

{Define 
$\sigma_i(t):=\sqrt{\sum_{j=1}^{i-1}e^{-2(m+n)jt}}$ for any $i \in \N$.
Since  $e^{-(m+n)t} \le \frac{1}{8}$ by the assumption  \equ{part1},  for any $i \in\N$ 
we have $\sigma_i(t) < 1$. Hence, by using Lemma \ref{int} 
$(k-1)$ times with $\vre=e^{-(m+n)t}$ and $\delta=\sigma_1(t),\dots, \sigma_{k-1}(t)$ respectively we get
{
$${
\begin{aligned}
&  \Xi^{k-1} \int_{{{B^P(1)}}}
1_{\phi(Z_{M})}(s)u(g_{kt}h_sx)\,d {\rho}_{1+\sigma_{k}(t) ^2}(s)                      \\
& =  \Xi^{k-1}  \int_{{{B^P(1)}}}
1_{\phi(Z_{M})}(s)u(g_{kt}h_sx)\,d {\rho}_{1+\vre^2(1+\sigma_{k-1}(t) ^2)}(s)         \\
& \le  \int \cdots\int_{{B^P(1)}^k} 1_{\phi(Z_{M})}\big(\phi(s_1,\dots,s_k) \big)u(g_{kt}h_{\phi(s_1,\dots,s_k)}) \,d {\rho}_1(s_k) \cdots d {\rho}_1(s_1) \\
& \underset{\equ{ineq1}}\le  8(k-1)(2c_0)^{k-1}{e}^{- \frac{t}{2} }\max\big(u(x),1\big). \end{aligned}}$$}
Hence, 
\eq{prod}{\int_{{{B^P(1)}}}
1_{\phi(Z_{M})}(s)u(g_{kt}h_sx)\,d {\rho}_{1+\sigma_{k} (t)^2}(s)  \le \frac{8(k-1)(2c_0)^{k-1}}{\Xi^{k-1}}{e}^{- \frac{t}{2} }\max\big(u(x),1\big).}
Also, 
since $1+\sigma_{k}(t)^2\in[1,2]$, $d {\rho}_1$ is absolutely continuous with respect to $d {\rho}_{1+\sigma_{k}(t)^2}$ {with a uniform    (over $B^P(1)$) bound on the Radon-Nikodym derivative.} 
Thus, we can find ${c_1}\ge 1$ such that \equ{prod} takes the form:
\eq{in 0}  {
\begin{aligned}
& \int_{B^P(1)} 1_{\phi(Z_{M})}(s)u(g_{kt}h_sx)\,d {\rho}_1(s)\le  \frac{8c_1(k-1)(2c_0)^{k-1}}{\Xi^{k-1}}{e}^{- \frac{t}{2} }  \max\big(u(x),1\big). 
\end{aligned}}}

{Now 
{consider the set
$$
A_x\left(tk,1,1,g_tX_{> M}\right)=\left\{s \in B^P(1):u(g_{(k-1)t}h_sx) > M \right\}.$$}
It is easy to see that if $s  \in 
{A_x\left(tk,1,1,g_tX_{> M}\right)}$, then $$s= \phi(s,0, \dots, 0)\text{ and }(s,0, \dots, 0) \in Z_{M},$$ where $0$ is the zero matrix. 
Hence, \equ{in 0} implies
\eq{ind in}{ \int_{
{A_x\left(tk,1,1,g_tX_{> M}\right)}}u(g_{kt}h_sx)\,d {\rho}_1(s)                        \le  \frac{8c_1(k-1)(2c_0)^{k-1}}{\Xi^{k-1}}{e}^{- \frac{t}{2} } \max\big(u(x),1\big). }
\ignore{where $C_\alpha \ge 1$ is such that for any $x \in X$ and any $h \in B^H(2)$ we have:
\eq{C alpha}{C_\alpha^{-1} \alpha(x) \le \alpha(hx) \le C_\alpha \alpha(x).}}
\ignore{Next, given $M>0$, let us define:
$$Z'_{M}:= \{(s_1,\dots,s_N) \in   {B^P(1)}^N:u ( g_{(k-1)t}h_{s_{i}} \cdots g_{kt} h_{s_1}x) \ge M \,\,\, \forall \,i \in \{1,\dots,N  \}   \} . $$
Since $M \ge  
{e^\frac{mnt}{2}}$, {in view of \equ{omega}} for any 
$y \in X$ one has 
\eq{newimpl}{u(g_{(k-1)t}
y) \ge M  \quad\Longrightarrow\quadu(g_{kt}
y)\ge 1.}
 Thus, by using  \equ{ind in} repeatedly we get for any $N \in \N$ 
\eq{se} {
\begin{aligned}
& \int \cdots \int_{  Z'_{M}}u(g_{kt}h_{s_{N}}\cdots g_{kt}h_{s_1}x) \,d {\rho}_1(s_N) \cdots d {\rho}_1(s_1) \\
& \le   
\left( \frac{8c_1(k-1)(2c_0)^{k-1}}{\Xi^{k-1}} \right)^N{e}^{- \frac{Nt}{2}}
 \max\big(u(x),1\big),   
\end{aligned}}}}
{Next, given $M>0$ and $i \in \N$, let us define:
$$
\begin{aligned}
Z'_{M,i}:= \big \{ & (s_1,\dots,s_i) \in   {B^P(1)}^i:  \\
& u ( g_{(k-1)t}h_{s_{j}} g_{kt} h_{s_{j-1}} \cdots g_{kt} h_{s_1}x) > M \,\,\, \forall \,j \in \{1,\dots,i  \}   \big\} .
\end{aligned}$$
Note that
\eq{eq1}{Z'_{M,1}= 
{A_x\left(tk,1,1,g_tX_{> M}\right)}.}
Since $M \ge  
{e^{\frac{mnt}{2}}}$, {in view of \equ{omega}} for any 
$y \in X$ one has 
\eq{newimpl}{u(g_{(k-1)t}
y) > M  \quad\Longrightarrow\quadu(g_{kt}
y)> 1.}
Then for any $2 \le i \in \N$, we obtain the following:
\eq{induc}{
\begin{aligned}
& \int \cdots \int_{  Z'_{M,i}}u(g_{kt}h_{s_{i}}\cdots g_{kt}h_{s_1}x) \,d {\rho}_1(s_i) \cdots d {\rho}_1(s_1) \\
& =\int \cdots \int_{  Z'_{M,i-1}} \int_{
{A_{g_{kt}h_{s_{i-1}} \cdots g_{kt} h_{s_1}x}\left(tk,1,1,g_tX_{> M}\right)}
}u(g_{kt}h_{s_{i}}\cdots g_{kt}h_{s_1}x) \,d {\rho}_1(s_i) \cdots d {\rho}_1(s_1) \\
&  \underset{\equ{ind in}}\le  \int \cdots \int_{  Z'_{M,i-1}}  \frac{8c_1(k-1)(2c_0)^{k-1}}{\Xi^{k-1}}{e}^{- \frac{t}{2} } \cdot \max \big(u(g_{kt}h_{s_{i-1}}\cdots g_{kt}h_{s_1}x),1 \big) \,d {\rho}_1(s_{i-1}) \cdots d {\rho}_1(s_1) \\
&  \underset{\equ{newimpl}}{=}\frac{8c_1(k-1)(2c_0)^{k-1}}{\Xi^{k-1}}{e}^{- \frac{t}{2} } \int \cdots \int_{  Z'_{M,i-1}} u(g_{kt}h_{s_{i-1}}\cdots g_{kt}h_{s_1}x) \,d {\rho}_1(s_{i-1}) \cdots d {\rho}_1(s_1).
\end{aligned}
}
 Thus, by using  \equ{induc} repeatedly we get for any $N \in \N$ 
\eq{se} {
\begin{aligned}
& \int \cdots \int_{  Z'_{M,N}}u(g_{kt}h_{s_{N}}\cdots g_{kt}h_{s_1}x) \,d {\rho}_1(s_N) \cdots d {\rho}_1(s_1) \\
& \le   
\left( \frac{8c_1(k-1)(2c_0)^{k-1}}{\Xi^{k-1}} \right)^{(N-1)}{e}^{- \frac{(N-1)t}{2}}
  \int_{  Z'_{M,1}}u(g_{kt}h_{s_{1}}x) \,d {\rho}_1(s_1)\\
 & \underset{\equ{ind in}, \, \equ{eq1}}{\le }  \left( \frac{8c_1(k-1)(2c_0)^{k-1}}{\Xi^{k-1}} \right)^{N}{e}^{- \frac{Nt}{2}} \max \big(u(x),1\big).
\end{aligned}}}
{Now, similarly to \equ{defphi}, define the function $\psi: B^P(1)^{N}\to M_{m,n}$ by
$$ \psi(s_1,\dots, s_N):=\sum_{j=1}^N e^{-(m+n)(j-1)kt}s_j,                   $$ 
so that
\eq{comm eq}{g_{kt}h_{s_N} \cdots g_{kt}h_{s_{1}}= g_{Nkt}h_{\psi(s_1,\dots,s_N)}.}
The following lemma is a modification of Lemma \ref{lem as phi} applicable to the sets $Z'_{M,N}$:
}}
{Now by combining \equ{se} and Lemma \ref{lem as} we get:
{
\eq{in1}
{\begin{aligned}
& \int \cdots \int_{ B^P(1)^{N} } 1_{ \psi(Z'_{C_\alpha M,N})}(\psi(s_1,\dots,s_N))u(g_{Nkt}h_{\psi(s_1,\dots,s_N)}x) \,d {\rho}_1(s_N) \cdots d {\rho}_1(s_1) \\
& \le    
 \left( \frac{8c_1(k-1)(2c_0)^{k-1}}{\Xi^{k-1}} \right)^{N} {e}^{- \frac{Nt}{2} } \max\big(u(x),1\big). 
\end{aligned}}
{
Then, as before, one can use Lemma \ref{int} 
$(N-1)$ times with $\vre=e^{-(m+n)kt}$ and $\delta=\sigma_1(kt),\dots, \sigma_{N-1}(kt)$ respectively and  obtain:
\eq{int1}
{\begin{aligned}
&  \Xi^{N-1}  \int_{{{B^P(1)}}} 1_{\psi( Z'_{C_\alpha M,N})}(s)u(g_{Nkt}h_sx)d {\rho}_{1+\sigma_{N}(kt)^2}(s)    \\
& = \Xi^{N-1}  \int_{{{B^P(1)}}} 1_{\psi( Z'_{C_\alpha M,N})}(s)u(g_{Nkt}h_sx)d {\rho}_{1+\vre^2(1+\sigma_{N-1}(kt)2)}(s)    \\
& \le  \int \cdots \int_{ {{B^P(1)}^N} } 1_{ \psi(Z'_{C_\alpha M,N})}(\psi(s_1,\cdots,s_N))u(g_{Nkt}h_{\psi(s_1,\dots,s_N)}x) \,d {\rho}_1(s_N) \cdots d {\rho}_1(s_1) \\
& \underset{\equ{in1}}\le    \left( \frac{8c_1(k-1)(2c_0)^{k-1}}{\Xi^{k-1}} \right)^{N} {e}^{- \frac{Nt}{2} } \max(u(x),1).   
\end{aligned}
}}
} Thus, we get
$${\int_{{{B^P(1)}}} 1_{\psi( Z'_{C_\alpha M})}(s)u(g_{Nkt}h_sx)\,d {\rho}_{1+\sigma_{N}(kt)^2}(s) \le   \frac{\left(8c_1(k-1)(2c_0)^{k-1}\right)^N}{\Xi^{kN-1}}  {e}^{- \frac{Nt}{2} }\max\big(u(x),1\big).}$$
Now 
{observe that, in view of \equ{mainset}, if $s \in 
A_x\left(kt,1,N,g_tX_{> C_\alpha M}\right)$}, then $$s= \psi(s,0,\dots,0)\text{ and }(s,0,\dots,0) \in Z'_{C_\alpha M,N}.$$ 
Thus, \equ{int1} takes the form:
 \eq{last it}{\int_{
 {A_x\left(kt,1,N,g_tX_{> C_\alpha M}\right)}}u(g_{Nkt}h_sx)\,d {\rho}_{1+\sigma_{N}(kt)^2} (s)  \le  \frac{\left(8c_1(k-1)(2c_0)^{k-1}\right)^N}{\Xi^{kN-1}}  {e}^{- \frac{Nt}{2} } \max\big(u(x),1\big).}
 Again, 
since $1+\sigma_{N}(kt)^2\in[1,2]$, $d s$ is absolutely continuous with respect to $d {\rho}_{1+\sigma_{N}(kt)^2}$ {with a uniform    (over $B^P(1)$) bound on the Radon-Nikodym derivative.} 
Thus, we can find ${c_2}\ge 1$ such that \equ{last it} takes the form:
$$ \int_{
{A_x\left(kt,1,N,g_tX_{> C_\alpha M}\right)}}u(g_{Nkt}h_sx)\,d s                       \le  \frac{c_2\left(8c_1(k-1)(2c_0)^{k-1}\right)^N}{\Xi^{kN-1}}  {e}^{- \frac{Nt}{2} } \max\big(u(x),1\big).$$
Now define {$C_{1}:= 16 c_0{c_1}{c_2}/\Xi$.} Then by the above inequality we have:
$$ \int_{
{A_x\left(kt,1,N,g_tX_{> C_\alpha M}\right)}}u(g_{Nkt}h_sx)\,d s                       \le {\left( (k-1) C_{1}^{k} {e}^{- \frac{t}{2}}\right)^N}\max\big(u(x),1\big).$$
This ends the proof of the proposition. }
\end{proof}}


\ignore{\begin{cor}\label{main cor}
For any $0<r <r_1,0<\beta<1/4, t>\frac{4}{(m+n)} \log \frac{1}{r}, N \in \N  $, and any $x \in X$, the set ${A}(t,{r},{Q_{\beta,t}}^c,N,x)$  can be covered with $\frac{u(x)}{m_{\beta,t}} {C_{1}}^N r^{N-1} t^N e^{mn(m+n-\frac{\beta}{mn})Nt} $
balls of radius $re^{-(m+n)Nt}$ in $H$, where $C_1>0$ is independent of $r,N$, and $t$. 
\end{cor}
The last corollary of this section gives us an upper bound for the Hausdorff dimension of the set $\bigcap_{N \in \N}           {A}(t,{r},{Q_{\beta,t}}^c,N,x)$. 
\ignore{\begin{cor}\label{cusp cor}
For any $0<r <r_1,0<\beta<1/4, t>\frac{4}{(m+n)} \log \frac{1}{r}$, and any $x \in X$, the set $\bigcap_{N \in \N}           {A}(t,{r},{Q_{\beta,t}}^c,N,x)$ has Hausdorff dimension at most $mn- \frac{\beta}{m+n}}+ \frac{\log (C_1rt)}{(m+n)t}$.
\end{cor}
\begin{proof}
Using the previous Corollary we have:
\begin{align*}
\dim  \bigcap_{N \in \N}           {A}(t,{r},{Q_{\beta,t}}^c,N,x)
& \le \lim_{N \rightarrow \infty}  \frac{\log \left(\frac{u(x)}{m_{\beta,t}} {C_1}^N r^{N-1} t^N e^{{mn(m+n-\frac{\beta}{mn})Nt}} \right)}{- \log re^{-(m+n)Nt}} \\
& = mn - \frac{\beta}{m+n} + \frac{\log (C_1rt)}{(m+n)t}
.\end{align*} 
\end{proof}}
\ignore{\begin{lem}\label{cov1}
Suppose that $I \subset \{1,\dots,N \}$ and $| I| \ge \sigma N$. Then
$$\mu ( Z_x(z,\ell,C \ell)       ) \le  C^2 u(x) e^{-\alpha t N}                 $$
\end{lem}
\begin{cor}
If $B$ is a Bowen $(Nt,r)$-ball that has non-empty intersection with the set $Z_x(z,N, \sigma, C^3  \ell^2)$, then $B \subset Z_x(z,N, \sigma, C \ell) $.
\end{cor}
\begin{proof}
\end{proof}
\begin{proof}[Proof of Corollary \ref{main cor}]
By the previous Corollary and Lemma \ref{cov1}, the set $Q_{c,t}:=Z_x(z,N,\sigma,C^2 \ell_{c,t}^2)$ can be covered with 
$$ {\Leb}(Z_x(z,N,\sigma,C \ell))       /  {\Leb}(g_{-Nt}B^H(r)g_{Nt}) \le \frac{ C^2 u(x) e^{-\alpha t N}            }{{\Leb}(g_{-Nt}B^H(r)g_{Nt})} $$ Bowen $(Nt,r)$-balls in $H$. This finishes the proof.
\end{proof}}

\section{
Combining the estimates of \S\ref{boxes} and \S\ref{escape} }\label{abstractlemma}
The goal of this section is to describe a method making it possible to put together properties (EEP) and (ENDP).  In the next proposition  nether (EEP) nor (ENDP) are assumed to hold. Instead we will assume certain covering estimates (similar to those we derived from (EEP) and (ENDP) in the previous sections) and then combine them to derive an estimate on which our dimension bound is based. This  formalizes an argument which first appeared in \cite{KKLM} and then was used in \cite{KMi2} to solve DDC in the case \equ{gt}.  

\begin{prop}\label{abstract} 
Let 
$P$ be a {connected} subgroup of $H$ normalized by $F$. Let $S,Q\subset X$, {$t$ satisfying \equ{bigt}},
$r > 0$, ${\theta \in[ r, r_*/2]}$, 
and let $k_1,k_2,a_1,a_2 \ge 1$ be given. Suppose that for any $N\in \N$ the following two conditions hold:
\begin{itemize}
\item[{\rm (a)}] For all $x\in \partial_r(S\cap Q)$ the set $A_x^N(t,r,S\cap Q)$ can be covered with at most 
$k_1 {e^{\delta Nt}}a_1^N    $
Bowen $(Nt,\theta) $-boxes in $P$.
\item[{\rm (b)}] 
For all $x\in \partial_\theta (S \cap Q)$ the set $A_x^N(t,\theta,Q^c)$ can be covered with at most 
$k_2 {e^{\delta Nt}}a_2^N    $
Bowen $(Nt,\theta) $-boxes in $P$.
\end{itemize}
Then for all $x\in \partial_r(S\cap Q)$ the set ${A}^N_x(t,r,S)$ can be covered with at most  \linebreak
$k_3 {e^{\delta Nt}}a_3^N    $
Bowen $(Nt,\theta) $-boxes in $P$, where
\eq{a3}{k_3 ={(1+C_0) {\frac{c_2}{c_1} \left( 
{\frac \theta r + 8\sqrt p }   \right)^p} k_1} k_2^2 ,\quad a_3 = a_1+a_2 +\sqrt{k_3a_2}.
}
\end{prop}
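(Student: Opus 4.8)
\emph{Proof proposal.}
The plan is to construct the covering of $A^N_x(t,r,S)$ by processing the orbit one ``run'' at a time, following its itinerary with respect to $Q$. Since $S=(S\cap Q)\sqcup(S\cap Q^c)$ and $S\cap Q^c\subset Q^c$, each $h\in A^N_x(t,r,S)$ partitions $\{1,\dots,N\}$ into maximal runs on which the points $g_{\ell t}hx$ lie alternately in $S\cap Q$ or in $Q^c$. I would fix such a run-structure, cover the corresponding piece of $A^N_x(t,r,S)$ by Bowen $(Nt,\theta)$-boxes, and sum over all run-structures at the end (a run-structure is determined by the set $J\subset\{1,\dots,N\}$ of ``$Q^c$-steps'', so the combinatorial sum that will appear is governed by $\sum_J a_1^{N-|J|}a_2^{|J|}$).

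The heart of the argument is an iteration that maintains the invariant: after $\ell$ time-steps one has a family of Bowen boxes each of which, read in its intrinsic coordinates (after conjugating back by $g_{\ell t}$), is a translate of $\overline{V_r}$ or of $\overline{V_\theta}$ based at a point in $\partial_r(S\cap Q)$, resp.\ $\partial_\theta(S\cap Q)$; this is forced because $\diam\overline{V_r}\le r/2<r$ and $\diam\overline{V_\theta}\le\theta/2<\theta$ by \equ{Bowen inc}, so any such box meeting the relevant set has a representative whose orbit point is in $S\cap Q$. A $Q$-run of length $j$ is handled by hypothesis (a): if the current intrinsic domain is $\overline{V_r}$ one applies (a) directly and gets at most $k_1e^{\delta jt}a_1^j$ Bowen $(jt,\theta)$-boxes, while if it is $\overline{V_\theta}$ one first tiles $\overline{V_\theta}$ by at most $\frac{c_2}{c_1}\bigl(\tfrac{\theta}{r}+8\sqrt p\bigr)^p$ translates of $\overline{V_r}$ via Lemma~\ref{coveringtheta}, then applies (a) to each. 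A $Q^c$-run of length $L$ is handled by hypothesis (b): its basepoint is in $\partial_\theta(S\cap Q)$, and since $\overline{V_r}\subset\overline{V_\theta}$ (b) also applies when the current domain is $\overline{V_r}$, producing at most $k_2e^{\delta Lt}a_2^L$ Bowen $(Lt,\theta)$-boxes. Passing from a $Q^c$-run back to the next $Q$-run requires a \emph{reset}: refine the current $\overline{V_\theta}$ by translates of $\overline{V_r}$ (Lemma~\ref{coveringtheta}) and then each of those by Bowen $(t,r)$-boxes (Lemma~\ref{covering}, legitimate since $t$ satisfies \equ{bigt}), which costs at most $(1+C_0)\tfrac{c_2}{c_1}\bigl(\tfrac{\theta}{r}+8\sqrt p\bigr)^pe^{\delta t}$ boxes, advances one step whose orbit point is in $S\cap Q$, and restores the intrinsic domain to $\overline{V_r}$ with basepoint in $\partial_r(S\cap Q)$ so that (a) applies again. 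Throughout, the diameters of all Bowen boxes encountered are controlled by \equ{diam} using \equ{bigt}, which is precisely what makes each refinement step work.

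Multiplying the counts along a run-structure with $Q$-runs of lengths $j_1,\dots$ and $Q^c$-runs of lengths $L_1,\dots$ yields at most $e^{\delta Nt}$ times a product of factors $k_1a_1^{j_i}$, $k_2a_2^{L_i}$, and one reset overhead per $Q^c$-run. I would encode the resulting recursion by the generating function $\Phi(z)=\sum_N\phi(N)z^N$, where $\phi(N)$ is the $e^{\delta Nt}$-normalized number of boxes: the iteration shows $\Phi$ is dominated term-by-term by a rational function whose denominator is $(1-a_1z)(1-a_2z)-k_3a_2z^2$. Its smallest positive zero $\rho$ satisfies $1/\rho\le a_1+a_2+\sqrt{k_3a_2}=a_3$ (rationalize the quadratic formula and use $(a_1-a_2)^2\le(a_1+a_2)^2+4(a_1+a_2)\sqrt{k_3a_2}$), and one reads off $\phi(N)\le k_3a_3^N$, i.e.\ the asserted bound. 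Any boxes ending with intrinsic domain $\overline{V_r}$ rather than $\overline{V_\theta}$ are enlarged to genuine Bowen $(Nt,\theta)$-boxes using $\overline{V_r}\subset\overline{V_\theta}$.

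The main obstacle is bookkeeping rather than any isolated inequality: one must verify that every re-centering keeps the basepoint in the correct neighborhood $\partial_r(S\cap Q)$ or $\partial_\theta(S\cap Q)$, so that (a) and (b) remain legitimately applicable; one must track which radius ($r$ or $\theta$) and which domain is in force at every stage, and match the index shifts caused by the one-step resets; and one must organize the sum over run-structures so that the transition overheads — which a priori could occur to a power comparable to $N$ — are absorbed into $a_3$ through the generating-function identity while only a single copy survives in the prefactor $k_3$.
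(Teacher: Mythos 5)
Your proposal follows the same architecture as the paper's proof: decomposing $A^N_x(t,r,S)$ according to the set $J = J_h$ of times the orbit is in $Q^c$, iterating along maximal runs, applying (a) on $Q$-runs and (b) on $Q^c$-runs, and handling each transition back from $Q^c$ to $Q$ by a one-step advance via Lemma~\ref{covering} together with a radius refinement via Lemma~\ref{coveringtheta}. (You perform the tiling and the one-step advance in the opposite order from the paper — tile $\overline{V_\theta}$ by $\overline{V_r}$-translates first, then cover by Bowen $(t,r)$-boxes — but both orders give the same cost $(1+C_0)\frac{c_2}{c_1}\big(\frac{\theta}{r}+8\sqrt p\big)^p e^{\delta t}$, so that is a cosmetic difference.) The tracking of basepoints in $\partial_r(S\cap Q)$ and $\partial_\theta(S\cap Q)$ via the diameter bound from \equ{Bowen inc} is also the paper's mechanism (cf.\ \equ{h01}, \equ{h11}).

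The genuine difference, and the only genuine gap, is in how you close the sum over run-structures. The paper reduces, after the inductive claim, to the combinatorial sum $\sum_{J\subset\{1,\dots,N\}} a_1^{N-|J|-d_{J,N}} a_2^{|J|-d_{J,N}} (k_3 a_2)^{d_{J,N}}$ and invokes the sharp inequality \cite[Lemma~5.4]{KMi2}, which says this is $\le (a_1+a_2+\sqrt{k_3 a_2})^N$ \emph{with constant $1$}. You instead encode the same sum via a transfer matrix $\begin{pmatrix} a_1 & a_2 \\ k_3 & a_2 \end{pmatrix}$ (equivalently, a generating function with denominator $(1-a_1 z)(1-a_2 z)-k_3 a_2 z^2$, as you write). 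Your computation of the dominant eigenvalue is correct, and the inequality $(a_1-a_2)^2 \le (a_1+a_2)^2 + 4(a_1+a_2)\sqrt{k_3 a_2}$ does show $\lambda_{\max}\le a_3$. However, the step ``one reads off $\phi(N)\le k_3 a_3^N$'' is where the argument is incomplete: knowing the smallest pole of a rational generating function gives the exponential growth rate, but not the prefactor. The coefficient $\phi(N)$ has the form $A_+\lambda_+^N + A_-\lambda_-^N$ where $A_\pm$ depend on the numerator and on the eigenvectors, and there is no a priori reason that $A_+ + |A_-| \le k_3$; indeed, a naive induction of the form $S_{N+1}\le a_3 S_N$ fails because the second row-sum of $T$, namely $k_3 + a_2$, can exceed $a_3$. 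To land exactly on the claimed bound $k_3 e^{\delta Nt} a_3^N$, you either need to compute the numerator explicitly and carry out the partial fraction analysis, or — more cleanly — prove (or cite) the exact coefficient-wise inequality of \cite[Lemma~5.4]{KMi2}, which is a combinatorial injection-type statement and not an immediate consequence of the spectral radius bound. Filling this in would complete the proof; everything else in the proposal matches the paper.
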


\begin{proof}
 For any $h \in {A_x^N \left(t,r,S\right)}$, let us define:
$${J_h}:=\big\{j \in \{1,\dots,N\}:g_{jt}hx \in Q^c\big\},$$ and for any $J \subset \{ 1,\dots,N  \}$, set:
$$Z(J):=\left\{ h \in {A_x^N \left(t,r,S\right)}: {J_h}=J \right\}.$$
Note that
\eq{union1}{          {A_x^N \left(t,r,S\right)} =\bigcup_{ J \subset \{   1,\dots,N\} } Z(J)}
 Let $J$ {be a subset of} $ \{1,\dots, N   \}$. We can decompose  $J$ and $I:=\{1,\dots,N\}  \ssm J$  into sub-intervals of maximal size $J_{1}, \dots,J_{q}$ and $I_{1}, \dots,I_{q'}$ so that
$$J=\bigsqcup_{j=1}^{q} J_{j} \text{ and }I=\bigsqcup_{i=1}^{q'} I_{i}.$$ Hence, we get a partition of the set $\{1,\dots,N\}$ as follows:
$$\{1,\dots,N \}=  \bigsqcup_{j=1}^{q} J_{j} \sqcup   \bigsqcup_{i=1}^{q'} I_{i}                  .$$ 
Now we inductively prove 
{the following} 

\begin{claim}\label{claim1}
For any integer $L \le N$, if
\eq{induc eq1}{\{1,\dots,L\}= \bigsqcup_{j=1}^{\ell} J_{j}
\sqcup \bigsqcup_{i=1}^{\ell '} I_{i},}
then the set $Z(J)$ can be covered with at most
\eq{L case11}{
k_2^{d'_{J,L}+1}\left({(1+C_0) {\frac{c_2}{c_1}\left({\frac \theta r + 8\sqrt p }    \right)^p} k_1} \right) ^{d_{J,L}+1}{e^{\delta Lt}}
  a_1^{\sum_{i=1}^{\ell'} |I_i|-d_{J,L}} 
   a_2^ {\sum_{j=1}^{\ell } |J_j|}
} 
Bowen $(Lt,\theta)$-boxes in $P,$ where $d_{J,L}$, $d'_{J,L}$ are defined as follows:
$$d_{J,L}:=\# \{i \in \{1,\dots, L\}: \ i<L, \,i \in J  \  and\  i+1 \in I\},$$ 
$$d'_{J,L}:=\# \{i \in \{1,\dots, L\}:\ i<L,\,i \in I \  and\  i+1 \in J \}.$$ 
\end{claim}

\begin{proof}[Proof of Claim \ref{claim1}] We argue by induction on $\ell + \ell'$. 
When $\ell + \ell' = 1$, we have   $d_{J,L}=d'_{J,L}=0$, and
there are two cases: either $\ell = 1$ and $\{1,\dots,L\}=J_{1}$, or  $\ell' = 1$ and $\{1,\dots,L\}=I_{1}$.
In the first case
$$
\begin{aligned} & Z(J)   \subset  A_x^L \left(t,{r},Q^c\right) \subset  A_x^L \left(t,\theta,Q^c\right),\end{aligned}$$
Therefore, condition (b) applied with $N = L$
implies that this set  can be covered with at most
$$\begin{aligned}
& k_2 e^{\delta L t} a_2^L 
 < {k_1} k_2 {(1+C_0) { \frac{c_2}{c_1}  \left( {\frac \theta r + 8\sqrt p }    \right)^p}}  e^{\delta L t} a_2^L
 \end{aligned}$$
Bowen $(Lt,\theta)$-boxes in $P$. This finishes the proof of the first case.

In the second case, note that
$$
Z(J) \subset  A_x^L \left(t,{r},S \cap Q\right).$$
Moreover, by condition (a) applied with $N=L$,  $A_x^N  \left(t,{r},S \cap Q \right)$ can be covered by at most
$$
k_1 e^{\delta Lt} a_1^L < {k_1} k_2 {(1+C_0) { \frac{c_2}{c_1}  \left( {\frac \theta r + 8\sqrt p }    \right)^p}}  e^{\delta L t}  a_1^L$$
Bowen $(Lt,\theta)$-balls in $P$. This ends the proof of the base of the induction.

\smallskip
\ignore{In the first step of the induction, if $\{1,\dots,L\}=I_{1}$, we have $\sum_{j=1}^{\ell '} |J_j|=0$, $d_{J,L}=0$, and $d'_{J,L}=0$. 
Furthermore, by definition
$$ 
\begin{aligned}
&\left\{ s \in M_{m,n}^{{r}/{32 \sqrt{L} }}: g_{\ell kt}h_sx \in  O^c \cap {Q_{c,t}} \,\,\, \forall\,  \ell \in I_1 \right\} \\
& = {{A}_x \left(kt,{r},{L}, {O^c} \cap Q_{c,t}\right). }              \end{aligned} $$
Thus, the claim in this case follows from Theorem \ref{main cor} applied with {$S$ replaced with $O^c \cap Q_{c,t}$}, $N$ replaced by $L$, and $t$ replaced with $kt$.\\ Also if $\{1,\dots,L\}=J_{1}$, we have $  \sum_{i=1}^\ell |I_i|=0, d_{J,L}=0$, and $d'_{J,L}=0$. Moreover, since ${r \underset{\equ{ineq beta3}}\le  1}$, we get
$$
\begin{aligned}
\left\{ s \in M_{m,n}^{{r}/{32 \sqrt{L} }}: g_{\ell kt}h_sx \in  {Q_{c,t}}^c \,\,\, \forall\,  \ell \in J_1 \right\}  
&  \subset A_x\left(kt,1,L,Q_{c,t}^c\right)\\
& \underset{\equ{qt}}{=}  A_x\left(kt,1,L,X_{>  C_{\alpha}^3 e^{mnt}}\right)
.  
\end{aligned}
$$
Hence, the claim in this case follows from Corollary \ref{fin cor} applied with {$M=C_{\alpha}^3 e^{mnt}$} and $N$ replaced with $L$.\bigskip \\}
In the inductive step, let $L'>L$ be the next integer for which an equation similar to \equ{induc eq1} is satisfied. We have two cases. Either
\eq{case11}{\{1,\dots, L'\}=\{1,\dots,L\} \sqcup I_{\ell'+1}}
or
\eq{case21}{\{1,\dots, L'\}=\{1,\dots,L\} \sqcup J_{\ell+1}.}
We start with the case \equ{case11}. Note that in this case we have
\eq{d11}{d_{J,L'}=d_{J,L}+1 \text{ and } d'_{J,L'}=d'_{J,L}.}
{
{By the induction hypothesis, an upper bound for the number of Bowen $(Lt,\theta)$-boxes needed to cover $Z(J)$ is given by \equ{L case11}. Then observe that:
\begin{itemize}
\item In view of \equ{bigt} and Lemma \ref{covering},
\eq{bound1}{e^{\delta t}(1+C_0e^{-\lambda_{\min} kt}) \le e^{\delta t} (1+C_0)}
is an upper bound for the number of Bowen $\big((L+1)t,\theta\big)$-boxes needed to cover an arbitrary Bowen $(Lt,\theta)$-box;
\item In view of Lemma \ref{coveringtheta},\
\eq{bound2} {\frac{c_2}{c_1}\left( {\frac \theta r + 8\sqrt p }    \right)^p}
is an upper bound for the number of Bowen $\big((L+1)t,r\big)$-boxes needed to cover an arbitrary Bowen $(Lt,{\theta})$-box.
\end{itemize}}
\ignore{Therefore, by using the induction hypothesis and in view of \equ{L case11}, we can cover $Z(J)$ with at most 
$${(1+C_0)}  k_2^{d'_{J,L}+1}\left({(1+C_0)  {\frac{c_2}{c_1}\left( {\frac \theta r + 8\sqrt p }    \right)^p}  k_1} \right) ^{d_{J,L}+1}{e^{\delta (L+1)t}}
  a_1^{\sum_{i=1}^{\ell'} |I_i|-d_{J,L}} 
   a_2^ {\sum_{j=1}^{\ell } |J_j|}$$
\ignore{\le D_1^{\sum_{i=1}^\ell |I_i|-d_{J,L}} \cdot D_2^ {\sum_{j=1}^{\ell '} |J_j|} \cdot K_0^{d_{J,L'}+1} \cdot {C_5}^{d_{J,L'}} \cdot e^{(L+1)mn(m+n)t } }
Bowen $\big((L+1)t,\theta\big)$-boxes in $P$.
{After that one can use Lemma \ref{coveringtheta} to cover each of the aforementioned boxes by Bowen $\big((L+1)t,r\big)$-boxes and conclude that  $Z(J)$  can be covered  with at most 
 \eq{ind ine1}
{
{(1+C_0)}  k_2^{d'_{J,L}+1}\left({(1+C_0)  {\frac{c_2}{c_1}\left( {\frac \theta r + 8\sqrt p }    \right)^p}  k_1} \right) ^{d_{J,L}+1}{e^{\delta (L+1)t}}
  a_1^{\sum_{i=1}^{\ell'} |I_i|-d_{J,L}} 
   a_2^ {\sum_{j=1}^{\ell } |J_j|}
}
Bowen $\big((L+1)t,r\big)$-boxes in $P$.}

\ignore{Let $B$ be one of the Bowen 
boxes 
in the aforementioned  cover such that $B \cap Z(J) \neq \varnothing$. {We need the following lemma:}
{
\begin{lem}\label{covering21}
$B$ can be covered by at most $\frac{c_2}{c_1} \cdot \left( \frac{2 \theta}{{r}}\right)^{p}$
 Bowen $\big((L+1)t,r\big)$-boxes in $P$.
\end{lem}
\begin{proof}
Suppose $B=g_{-(L+1)t}\overline{V_\theta} \gamma g_{(L+1)t}$. Note that we are assuming that $0<r,\theta \le r''/2$, and  either $\theta= r$ or $\theta \ge 8 \sqrt{p}r$. If $\theta=r$, obviously the statement of the lemma follows immediately from $\frac{c_2}{c_1}2^p \ge 1$. So assume that $\theta \ge 8 \sqrt{p}r$. It is easy to see that, in order to find an upper bound for the number of Bowen $\big((L+1)t,r\big)$-boxes in $P$ which intersect $B$, it suffices to find an upper bound for the following:
$$\#\{\gamma\in \Lambda_r: V_r \gamma \cap \overline{V_\theta} \neq \varnothing \}.$$
Observe that if $V_r \gamma$ intersects $\overline{V_\theta}$, then in view of \equ{diam} and right-invariance of the distance function we must have $V_r \gamma \subset \partial_{r/2}\overline{V_\theta}$. Hence,
$$
\begin{aligned}
\#\{\gamma\in \Lambda_r: V_r \gamma \cap \overline{V_\theta} \neq \varnothing \} 
&\le \frac{\nu \left(\partial_{r/2}\overline{V_\theta} \right)}{\nu \left( V_r\right)}\underset{\equ{lb1}}\le  \frac{c_2}{c_1} \cdot   \frac{ \Leb \left( \partial_r \big(\frac{\theta}{4 \sqrt{p}} {I_{\mathfrak p}} \big) \right)}{ \Leb \left(\frac{r}{4 \sqrt{p}} {I_{\mathfrak p}} \right)} \\
& \underset{\theta \ge 8 \sqrt{p}r}\le \frac{c_2}{c_1} \cdot  \frac{ \Leb \left( \frac{2 \theta}{4 \sqrt{p}} {I_{\mathfrak p}} \right)}{ \Leb \left(\frac{r}{4 \sqrt{p}} {I_{\mathfrak p}} \right)} = \frac{c_2}{c_1} \cdot \frac{\left(\frac{2 \theta}{4 \sqrt{p}} \right)^p}{\left(\frac{r}{4 \sqrt{p}}  \right)^p} 
 = \frac{c_2}{c_1} \cdot   \left(\frac{2 \theta}{r} \right)^p,
\end{aligned}$$
where in the second inequality above we also used the bi-Lipschitz property of exp and the fact that $8 \sqrt{p}r \le \theta \le r''/2$. This finishes the proof.
\end{proof}}}}
Now let $B_r$ be a Bowen $\big((L+1)t,r\big)$-box that has non-empty intersection with $Z(J)$, and let $h \in B_r \cap Z(J)$. Since $h \in Z(J)$, it follows that $g_{(L+1)t}hx \in  S \cap Q$. Therefore, if we denote the center of $B_r$ by $h_0$, we have
\eq{h01}{{g_{(L+1)t}h_{0}x \in \overline{V_r}(S \cap Q) \subset \partial_{r} (S \cap Q).}} Moreover, for any $h \in B_r$ and any positive integer $1 \le i \le L'-(L+1)$ we have:\
$$\begin{aligned}
g_{(L+1+i)t}hx
&=g_{it}(g_{(L+1)t}hh_0^{-1}g_{-(L+1)t})(g_{(L+1)t}h_{0}x).
\end{aligned}$$
{Since} the map $h \rightarrow g_{(L+1)t}hh_0^{-1} g_{-(L+1)t}$ {sends} $B_r$ into $\overline{V_r}$, 
{the preceding equality implies that}
$$
\begin{aligned}
& \left \{h' \in B_r: g_{(L+1+i)t}h'x \in S \cap Q \,\,\, \forall \, i \in \{1,\dots, L'-(L+1) \} \right \}  \\
& \subset  g_{-(L+1)t} A_{g_{(L+1)t}h_{0}x}^{L'-(L+1)}\left(t,r,S \cap Q \right)g_{(L+1)t}h_0. 
\end{aligned}
$$ \smallskip
So, in view of the above inclusion and \equ{h01}, we can go through the same procedure and apply condition (a) with $N$ replaced with $|I_{\ell'+1}|-1=L'-(L+1)$ and $x$ replaced with $g_{(L+1)t}h_{0}x$, and conclude that $B_r \cap Z(J)$ can be covered with at most \eq{bound3}{k_1 e^{\delta (|I_{\ell'+1}|-1)t}a_1^{|I_{\ell'+1}|-1}} Bowen $(L't,\theta)$-{boxes} in $P$.
Multiplying the bounds  \equ{L case11}, \equ{bound1}, \equ{bound2} and  \equ{bound3}, we conclude
that $Z(J)$ can be covered with at most  
\begin{align*}
&\frac{c_2}{c_1}   \left({\frac \theta r + 8\sqrt p } \right)^{p} {e^{\delta (|I_{{\ell'} +1}|-1)t}}a_1^{|I_{{\ell'}+1}|-1} {(1+C_0)} \\
&  \cdot  k_2^{d'_{J,L}+1}\left({(1+C_0) {\frac{c_2}{c_1} \left( \frac{2 \theta}{r}    \right)^p} k_1} \right) ^{d_{J,L}+1}{e^{\delta (L+1)t}}
  a_1^{\sum_{i=1}^{\ell'} |I_i|-d_{J,L}} 
   a_2^ {\sum_{j=1}^{\ell } |J_j|} \\
& \underset{\equ{d11}}{=} k_2^{d'_{J,L'}+1}\left({(1+C_0) {\frac{c_2}{c_1}\left({\frac \theta r + 8\sqrt p }    \right)^p} k_1} \right) ^{d_{J,L'}+1}{e^{\delta L't}}
a_1^{\sum_{i=1}^{\ell'} |I_i|-d_{J,L'}}
a_2^ {\sum_{j=1}^{\ell } |J_j|} 
\end{align*}
Bowen $(L't,\theta)$-{boxes} in $P$.
This ends the proof of the claim in this case. 
\smallskip

Next assume  
\equ{case21}.
Note that in this case
\eq{d21}{d_{J,L'}=d_{J,L}\text{ and } d'_{J,L'}=d'_{J,L}+1.}
Take a covering of $Z(J)$ with Bowen $(Lt,\theta)$-boxes in $P$. Suppose $B'$ is one of the Bowen $(Lt,\theta)$-boxes in the cover such that $B' \cap Z(J) \neq \varnothing$, and let $h_1$ be the center of $B'$. It is easy to see that $B' \cap Z(J) \neq \varnothing$ implies:
\eq{h11}{g_{Lt}h_{1}x \in \overline{V_\theta}(S \cap Q) \subset \partial_{\theta} (S \cap Q).}
On the other hand, for any $s \in B'$ and any positive integer $1 \le i \le L'-L$ we have:
$$\begin{aligned}
g_{(L+i)t}h_1x 
&=g_{it}(g_{Lt}hh_1^{-1} g_{-Lt})(g_{Lt}h_{1}x) .
\end{aligned}
$$
Hence, since the map $h \rightarrow g_{Lt}{hh_1^{-1}}g_{-Lt}$ maps $B'$ into $\overline{V_{\theta}}$, 
the above equality implies
$$
\begin{aligned}
 \left \{h \in B': g_{(L+i)t}hx \in Q^c \,\,\, \forall \, i \in \{1,\cdots, L'-L \} \right \}  
 \subset  g_{-Lt} A_{g_{Lt}h_{1}x}^{L'-L}\left(t,\theta,Q^c \right)g_{Lt} h_1
\end{aligned}
$$
So in view of the above inclusion and \equ{h11}, we can apply condition (b) with  $g_{Lt}h_{1}x$ in place of $x$, and $|J_{\ell+1}|=L'-L$ in place of $N$. This way, we get that the set $B' \cap Z(J) $ can be covered with 
at most
$ k_2 a_2^{|J_{\ell+1}|}  {e^{\delta |J_{\ell+1}|t}}
$ 
Bowen $(L't,\theta)$-boxes in $P$.
From this, combined with the induction hypothesis, we conclude that $Z(J)$ can be covered with at most 
$$
\begin{aligned}
& k_2 a_2^{|J_{\ell+1}|}  {e^{\delta |J_{\ell+1}|t}}   \cdot  k_2^{d'_{J,L}+1}\left({(1+C_0) {\frac{c_2}{c_1} \left( {\frac \theta r + 8\sqrt p }    \right)^p} k_1} \right) ^{d_{J,L}+1}{e^{\delta Lt}}
  a_1^{\sum_{i=1}^{\ell'} |I_i|-d_{J,L}} 
   a_2^ {\sum_{j=1}^{\ell } |J_j|} \\
& \underset{\equ{d21}}{=}k_2^{d'_{J,L'}+1}\left({(1+C_0) {\frac{c_2}{c_1} \left( {\frac \theta r + 8\sqrt p }   \right)^p} k_1} \right) ^{d_{J,L'}+1} {e^{\delta L't}}
a_1^{\sum_{i=1}^{\ell'} |I_i|-d_{J,L}} 
a_2^ {\sum_{j=1}^{\ell +1} |J_j|}
\end{aligned}
$$
Bowen $(L't,\theta)$-boxes in $P$,}
finishing the proof of the claim. \end{proof}

Now by letting $L=N$, we conclude that $Z(J)$ can be covered with at most \eq{last step1}{
 k_2^{d'_{J,N}+1}\left({(1+C_0) {\frac{c_2}{c_1}\left( {\frac \theta r + 8\sqrt p }   \right)^p} k_1} \right) ^{d_{J,N}+1}{e^{\delta Nt}}
  a_1^{|I|-d_{J,N}} 
   a_2^ {|J|} }
Bowen $(Nt,\theta)$-{boxes} in $P$. 

Clearly \eq{lastre1}{d'_{J,N} \le d_{J,N}+1.} Also, note that since $d_{J,N} \le \max (|I|,|J|)$, the exponents $|I|-d_{J,N}, |J|-d_{J,N}$ in \equ{last step1} are non-negative integers.  
So, in view of \equ{union1} and \equ{last step1}, the set
${A_x^N\left(t,{r},S\right)}$ can be covered with at most
\begin{align*}
& \sum_{  J \subset \{1,\dots,N\}}   k_2^{d'_{J,N}+1}\left({(1+C_0) {\frac{c_2}{c_1}\left( {\frac \theta r + 8\sqrt p }   \right)^p} k_1} \right) ^{d_{J,N}+1}{e^{\delta Nt}}
  a_1^{|I|-d_{J,N}} 
   a_2^ {|J|} \\
& \underset{\equ{lastre1}}\le  {e^{\delta Nt}} \sum_{  J \subset \{1,\dots,N\}}   k_2^{d_{J,N}+2}\left({(1+C_0) {\frac{c_2}{c_1}\left( {\frac \theta r + 8\sqrt p }   \right)^p} k_1} \right) ^{d_{J,N}+1} a_1^{|I|-d_{J,N}}  a_2^ {|J|}\\
 & \le k_3 {e^{\delta Nt}}\sum_{ J \subset \{1,\dots,N\}}   a_1^{|I|-d_{J,N}}  a_2^ {|J|}  k_3^{d_{J,N}} \\
 & =k_3 {e^{\delta Nt}} \sum_{ J \subset \{1,\dots,N\}}   a_1^{N-|J|-d_{J,N}}  a_2^ {|J|-d_{J,N}}  {({k_3a_2 }) ^{d_{J,N}}}\end{align*}
Bowen $(Nt,\theta)$-{boxes} in $P$,  where $k_3:={(1+C_0) {\frac{c_2}{c_1}\left( {\frac \theta r + 8\sqrt p }   \right)^p} k_1} k_2^2$.

\ignore{
 \\ 
&  \stackrel{\small{(1)}}\le  \frac{C_1}{\theta^{2d}} {e}^{mn(m+n)Nkt} \cdot \left( D_1+D_2+ \sqrt{\frac{C_1 \cdot D_2}{\theta^{p}} } \right)^N\\
& \underset{\equ{C1},\, \equ{C2}}{=} \frac{C_1}{\theta^{2d}} {e}^{mn(m+n)Nkt}  \left( 1-  K_1 \mu ({{{\sigma _{2 \sqrt{L}{\theta}}}{ U}}})+\frac{K_2{e}^{-\lambda kt}}{r^{mn}}+(k-1)C_{1}^k  {e^{-\frac{t}{2}}  }   + \sqrt{\frac{(k-1)C_1C_{1}^{k}}{\theta^{p}}}e^{-\frac{t}{4}} \right)^N \\
& \le \frac{C_1}{\theta^{2d}} {e}^{mn(m+n)Nkt} \left(1-  K_1 \mu ({{{\sigma _{2 \sqrt{L}{\theta}}}{ U}}})+\frac{K_2{e}^{-\lambda kt}}{r^{mn}} + {\frac{k-1}{\theta^{p}}}C_{3}^k  {e}^{-\frac{t}{4}} \right)^N 
 where $C_1={C_\alpha^42^{3p}  }$ and $C_{3}=2C_{1} \cdot \max (1,C_0)$
 and inequality $(1)$ above results from the following lemma.}
 
 \smallskip
 
To simplify the last expression we will use  the following

 \begin{lem}\cite[Lemma 5.4]{KMi2}  For any $n_1,n_2,n_3 > 0$ it holds that
 $$\sum_{  J \subset \{1,\dots,N\}}   n_1^{N-|J|-d_{J,N}}  n_2^ {|J|-d_{J,N}} n_3^{2 d_{J,N}} 
 \le   \left( n_1+n_2+ n_3 \right)^N.$$
 \end{lem}

 Applying the above lemma with $n_1=a_1$, $n_2=a_2$ and $n_3=\sqrt{k_3 a_3}$, we conclude that 
 ${A_x^N\left(t,{r},S\right)}$ can be covered with at most
$$
 k_3 {e^{\delta Nt}}  \left( a_1+a_2+ \sqrt{k_3a_3 } \right)^N 
 $$
Bowen $(Nt,\theta)$-{boxes} in $P$.
The  proof {of 
{Proposition \ref{abstract}}}  is now complete.
\end{proof}

\section{
Proof of Theorem \ref{first} 
 }\label{endofproof}
Given $P\subset G$ satisfying (ENDP), $0 < c < 1$, and $t>0$, let us define the {compact subset $Q_{c,t}$ of $X$} as follows:
\eq{qt}{Q_{c,t}:=X_{\le   C^3  \ell_{c,t}^2},}
where $\ell_{c,t}$ is as in \equ{lt} and $C$ is as in \equ{reg1}.

\ignore{Define $r_2:=\min(r_0,r_1)$ }

\smallskip
 \begin{lem}\label{first1} 
Let 
$P$ {be} a subgroup of $G$ that has {properties} {\rm (EEP)} and {\rm (ENDP)}. 
Then there exist constants
$$a',b', C_1, C_2, \lambda > 0$$
such that for any open subset $O$ of $X$ and all   {$N\in\N$}  the following holds:
For  all $0 < c < 1$ there exists $t_c>0$ such that for all $t \in \N t_c$, $0<r<1$, and $2 \le k \in \N$ satisfying
\eq{r estimate 4}{{ {e^{ \frac{a' - kt}{b'}}
} \le r  < \frac{1}{4}\min\big( r_0 \left({ \partial_{1}} Q_{c,t}  \right), {r_*}\big)},}
all {$\theta\in {\left[r ,\frac {r_*}{2}\right]}$}, and for all {$x \in \partial_r \left(Q_{c,t} \cap O^c \right)$}, the set ${A_x^N \left(kt,r,O^c\right)}$ can be covered with at most 
$$\frac{C_1}{\theta^{2p}} {e^{\delta Nkt}} \left( 1 - \mu \big( {\sigma_{4 \theta}} O  \big) +\frac{C_2}{r^{p}} e^{-\lambda kt}  + {{\frac{8C_1}{\theta^{p}}} {{\frac{\sqrt c}{1-c}}}} \right)^N    $$
Bowen $(Nkt,\theta) $-boxes
in $P$.
\ignore{\item
There exists a function $C: X \rightarrow \R^+$ such that for all $0<r<1$, all $0<s<1$, all $t \ge t_0 $, and for all $x \in X$, the set ${A}(kt,{r},Q_{c,t}^c,{N},x)$ can be covered with at most 
$$\frac{C(x)}{\theta^{p}}(k-1)^N{C_{1}}^{kN} t^{kN} e^{(mn(m+n)Nk-\frac{N}{2})t}  $
cubes of {side-length}  $\theta e^{-(m+n)Nkt}$ in $ H$.}
\end{lem}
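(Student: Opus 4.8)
The plan is to read Lemma \ref{first1} as the step that fuses the two covering estimates derived separately from (EEP) and (ENDP), by means of the abstract combination result Proposition \ref{abstract}. Fix $0<c<1$, let $t_c$ be as in Corollary \ref{fin cor}, take $t\in\N t_c$, and put $Q:=Q_{c,t}=X_{\le C^3\ell_{c,t}^2}$ (see \equ{qt}) and $S:=O^c$. Then $S\cap Q=O^c\cap Q_{c,t}$ is bounded ($Q_{c,t}$ being compact, since $u$ is proper), while $Q^c=X_{>C^3\ell_{c,t}^2}$ and $(S\cap Q)^c=O\cup X_{>C^3\ell_{c,t}^2}$ are open ($u$ being continuous). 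Applying Proposition \ref{abstract} with its parameter ``$t$'' taken to be $kt$, its ``$r$'' and ``$\theta$'' as here, and these $S,Q$, the conclusion is exactly a covering of $A_x^N(kt,r,O^c)$ by Bowen $(Nkt,\theta)$-boxes in $P$ valid for every $x\in\partial_r(O^c\cap Q_{c,t})$. So the task reduces to checking hypotheses (a), (b) of that proposition with explicit constants and then unwinding \equ{a3}.

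For (a) I would apply Theorem \ref{main cor} to the open set $O\cup X_{>C^3\ell_{c,t}^2}=(S\cap Q)^c$, with $t$ replaced by $kt$. The hypotheses match up: the first inequality in \equ{r estimate 4} is precisely $kt\ge a'+b'\log\frac1r$; the inclusion $(S\cap Q)^c\subset Q_{c,t}$ gives $r_0\big(\partial_1((S\cap Q)^c)\big)\ge r_0(\partial_1 Q_{c,t})$, so the second inequality in \equ{r estimate 4} implies \equ{r estimate 3}; and $x\in\partial_r(S\cap Q)$ is the condition on $x$ needed there. Weakening the output via $\mu\big(\sigma_{4\theta}(O\cup X_{>C^3\ell_{c,t}^2})\big)\ge\mu(\sigma_{4\theta}O)$, hypothesis (a) holds with $a_1=1-\mu(\sigma_{4\theta}O)+\frac{C_2}{r^p}e^{-\lambda kt}$ and $k_1=\frac{c_2}{c_1}(2r/\theta)^p$, where $C_2,\lambda$ are the constants of Theorem \ref{main cor}.

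For (b) I would apply Corollary \ref{fin cor} to cover $A_x^N(kt,\theta,X_{>C^3\ell_{c,t}^2})=A_x^N(kt,\theta,Q^c)$ by Bowen $(Nkt,\theta)$-boxes; its hypothesis $kt\ge\log(8\sqrt p)/\lambda_{\min}$ is forced by \equ{r estimate 4} (which yields $kt>a'\ge\log(8\sqrt p)/\lambda_{\min}$). This gives $a_2=\frac{4c}{1-c}$ and $k_2=\nu(V_\theta)^{-1}\max(u(x),d)\ell_{c,t}^{-2}$. For $x\in\partial_\theta(S\cap Q)\subset\partial_\theta Q_{c,t}$ the regularity estimate \equ{reg1} gives $u(x)\le C\cdot C^3\ell_{c,t}^2$, and since $\ell_{c,t}=\max(d/c,e^{\alpha t})\ge\max(1,d)$ also $\max(u(x),d)\le C^4\ell_{c,t}^2$; with $\nu(V_\theta)\ge c_1(\theta/4\sqrt p)^p$ from \equ{lb1} this gives $k_2\le\frac{C^4(4\sqrt p)^p}{c_1}\theta^{-p}$.

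Finally I would substitute into \equ{a3}. Since $r\le\theta$, $(\theta/r+8\sqrt p)^p k_1=\frac{c_2}{c_1}2^p(1+8\sqrt p\,r/\theta)^p\le\frac{c_2}{c_1}2^p(1+8\sqrt p)^p$, hence $k_3=(1+C_0)\frac{c_2}{c_1}(\theta/r+8\sqrt p)^p k_1 k_2^2\le C_1\theta^{-2p}$ for a suitable $C_1\ge1$ absorbing the absolute constants; and $a_3=a_1+a_2+\sqrt{k_3a_2}$ with $a_2+\sqrt{k_3a_2}\le\frac{4c}{1-c}+\frac{2\sqrt{C_1}}{\theta^p}\frac{\sqrt c}{\sqrt{1-c}}\le\frac{8C_1}{\theta^p}\frac{\sqrt c}{1-c}$ after enlarging $C_1$ if needed (using $\theta^p<1$, $\sqrt c\le1$, $1-c\le\sqrt{1-c}$). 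Thus $a_3\le 1-\mu(\sigma_{4\theta}O)+\frac{C_2}{r^p}e^{-\lambda kt}+\frac{8C_1}{\theta^p}\frac{\sqrt c}{1-c}$, and $k_3e^{\delta Nkt}a_3^N$ is dominated by the quantity in the statement. One caveat worth noting: $k_1$, and possibly $a_1$, may be $<1$, whereas Proposition \ref{abstract} is stated with $k_1,a_1\ge1$; but its proof uses only positivity of $k_1,a_1,a_2$ together with $k_2\ge1$ (the latter arrangeable by taking $c_1\le1$), so those normalizations are inessential. Apart from that, the step is pure combinatorial bookkeeping — all the dynamical content is already encapsulated in Theorem \ref{main cor} and Corollary \ref{fin cor} — and the main obstacle, such as it is, is simply keeping the powers of $\theta$ and $r$ and the various constants straight; I do not expect a genuine difficulty here.
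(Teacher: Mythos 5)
Your proposal is correct and follows essentially the same route as the paper: hypothesis (a) of Proposition \ref{abstract} is supplied by Theorem \ref{main cor} applied to $O\cup Q_{c,t}^c$ with $t$ replaced by $kt$, hypothesis (b) by Corollary \ref{fin cor} together with the bound $\max(u(x),d)\le C^4\ell_{c,t}^2$ on $\partial_\theta(O^c\cap Q_{c,t})$, and the final bound comes from unwinding \equ{a3} exactly as you do. Your side remark that $k_1$ and $a_1$ need not be $\ge 1$ (contrary to the normalization stated in Proposition \ref{abstract}) is a fair observation about a point the paper passes over silently, and your explanation of why it is harmless is sound.
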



\begin{proof}
Let $0 < c < 1$, take $t_c$   as in Corollary \ref{fin cor}, and let $0<r<1$, $2 \le k \in \N$, $t \in \N t_c$ be such that 
\equ{r estimate 4} is satisfied, where {$a',b' 
$} are as in Theorem \ref{main cor}.
Also let $\theta\in {\left[r ,\frac {r_*}{2}\right]}$. Note that the second inequality in  \equ{r estimate 4}, together with the fact that $r_0\big(\partial_1(O^c \cap Q_{c,t})\big) \ge r_0(\partial_1 Q_{c,t})$, implies {condition \equ{r estimate 3} with {$O$ replaced by $O \cup Q_{c,t}^c$}. Moreover, condition {\equ{t estimate 2}} with $t$ replaced by $kt$} follows from the first inequality in \equ{r estimate 4} .
Hence, by applying Theorem \ref{main cor} with $O$ replaced with $O \cup Q_{c,t}^c$ and $t$ replaced with $kt$, we get that for all $x \in \partial_r(O^c \cap Q_{c,t}) $ and for all $N \in \N$, the set $A_x^N \left(kt,r,O^c \cap Q_{c,t}\right)$ can be covered with at most $k_1 e^{\delta N kt}a_1^{N}$ Bowen $(Nkt,\theta)$-boxes in $P$, where
\eq{a1}{k_1= \frac{c_2}{c_1} \left(\frac{2r}{\theta}\right)^p, \quad a_1=1 - \mu \big( {\sigma_{4 \theta}} O  \big) + \frac{C_2}{r^{p}} e^{-\lambda kt}   }
and $C_2, \lambda$ are as in Theorem \ref{main cor}.

Moreover, in view of \equ{qt} and \equ{lt}, for any $x \in \partial_\theta (O^c \cap Q_{c,t}) \subset \partial_2 Q_{c,t}$ we have
$$  \frac{ \max(u(x),d)}{\ell_{c,t}^2}       \le \frac{\max(C^4\ell_{c,t}^2,d)}{\ell_{c,t}^2}  \underset{\ell_{c,t}^2 \ge \ell_{c,t}>d}{= } C^4.        $$
Also, note that
{\eq{ktlb}{kt \underset{\equ{r estimate 4}}{\ge} a'+b'\log \frac{1}{r} >b'\log \frac{1}{r}> b'\ge \frac{\log (8 \sqrt{p})}{\lambda_{\min}}.}}
Thus, by applying Corollary \ref{fin cor} we get that for all $x \in \partial_\theta (O^c \cap Q_{c,t})$ and for all $N \in \N$, the set $A_x^N \left(kt,\theta, Q_{c,t}^c\right)$ can be covered with at most $k_2e^{\delta N kt}a_2^{N}$ Bowen $(Nkt,\theta)$-boxes in $P$, where
\eq{a2}{k_2=\frac{C^4}{\nu \left({V_\theta}\right)},\, \quad a_2=\frac{4c}{1-c}. }
Now we put together the estimates we found to get an estimate for the number of Bowen $(Nkt,\theta)$-boxes needed to cover the set $A_x^N \left(kt,r,O^c \right)$. Observe that in view of \equ{ktlb}, we have {$kt \ge \frac{\log (8 \sqrt{p})}{\lambda_{\min}} $.}
So, we can apply Proposition \ref{abstract} with $S=O^c,\,Q=Q_{c,t} $  and $kt$ in place of $t$, and conclude that the set $A_x^N \left(kt,r,O^c \right)$ can be covered with at most $k_3e^{\delta Nkt} a_3^{N}$ Bowen $(Nkt,\theta)$-boxes in $P$, where $k_3, a_3$ are as in \equ{a3}, $k_1,a_1$ are as in \equ{a1}, and $k_2, a_2$ are as in \equ{a2}.

Finally, we need to estimate $k_3e^{\delta Nkt} a_3^N$ from above. We have
\eq{k3est}{
\begin{aligned}
& \, k_3  \underset{\equ{a3}} {=}{(1+C_0) {\frac{c_2}{c_1} \left( {\frac \theta r + 8\sqrt p }   \right)^p} k_1} k_2^2 \\
& \, \underset{\equ{a1},\, \equ{a2}}{=}{(1+C_0) {\left(\frac{c_2}{c_1}\right)^2 \left( {\frac \theta r + 8\sqrt p }   \right)^p}  \left(\frac{2r}{\theta}\right)^p} \left(\frac{C^4}{\nu \left({V_\theta}\right)}\right)^2 \\
&  \quad \underset{(\theta \ge r)}\le {(1+C_0) {\left(\frac{c_2}{c_1}\right)^2 \left( {2 + 16 \sqrt p }   \right)^p}} \left(\frac{C^4}{\nu \left({V_\theta}\right)}\right)^2                                       \\
& \quad \, \underset{\equ{lb1}}{\le } (1+C_0) \left(\frac{c_2}{c_1}\right)^2 \left( {2 + 16 \sqrt p }   \right)^p \left( \frac{(4 \sqrt{p})^{p}}{c_1 \theta^{p}} C^4 \right)^2 
= \frac{C_1^2}{\theta^{2p}},
\end{aligned}
}
where $C_1:=\sqrt{1+C_0} \frac{c_2}{c_1} \left( {2 + 16 \sqrt p }   \right)^{p/2}  \frac{(4 \sqrt{p})^{p}}{c_1 } C^4  \ge 1$. Furthermore, we have
\eq{a3est}
{\begin{aligned}
& a_3  \underset{\equ{a3}}{=}a_1+a_2+ \sqrt{k_3a_2}
\underset{\equ{k3est}}\le  a_1+a_2+ \sqrt{\frac{C_1^2}{\theta^{2p}} \cdot a_2}\\
&  \underset{\equ{a1},\,\equ{a2}}{=}1 - \mu \big( {\sigma_{4 \theta}} O  \big) + \frac{C_2}{r^{p}} e^{-\lambda kt}+ \frac{4c}{1-c}+ \sqrt{\frac{C_1^2}{\theta^{2p}} \cdot\frac{4c}{1-c}} \\
& \, \quad \le 1 - \mu \big( {\sigma_{4 \theta}} O  \big) + \frac{C_2}{r^{p}} e^{-\lambda kt}+ \frac{8C_1}{\theta^p} \cdot \frac{\sqrt{c}}{1-c}.
\end{aligned}
}
Therefore, by combining \equ{k3est} and \equ{a3est} we obtain
$$k_3 e^{\delta Nkt} a_3^N \le \frac{C_1}{\theta^{2p}} {e^{\delta Nkt}} \left( 1 - \mu \big( {\sigma_{4 \theta}} O  \big) +\frac{C_2}{r^{p}} e^{-\lambda kt}  + {{\frac{8C_1}{\theta^{p}}} {{\frac{\sqrt c}{1-c}}}} \right)^N $$
This ends the proof of the lemma.
\end{proof}

\begin{proof}[Proof of Theorem \ref{first}]
\ignore{The following lemma, {which is a slight modification of \cite [Lemma 6.4]{KMi1}}, gives us the number of balls of radius  {$\theta e^{-\lambda_{\max}}$} needed to cover a Bowen $(t,\theta)$-box {in $P$}.
{\begin{lem}
\label{coveringballs} There exists $C_4 > 0$ such that for any {$t>0 $} and any $0<\theta \le r_*$,  any Bowen $(t,\theta)$-box in $P$ can be covered with at most  {$C_4 e^{(p \lambda_{\max}- \delta)t}$}
balls in $P$ of radius $\theta e^{- \lambda_{\max}t}$. 
\end{lem}
\begin{proof}
By \cite [Lemma 6.4]{KMi1} there exists $C_5>0$ such that for any $t>0$ and any $0<\theta \le r_*$, any Bowen $(t,\theta)$-box in $P$ can be covered with at most $C_5 \frac{{e^{-\delta t} \nu \left(V_\theta \right)}}{{\nu \left({B^P}(\theta e^{ -  \lambda_{\max} t})\right)}}$ balls in $P$ of radius $\theta e^{- \lambda_{\max}t}$. Also 
$$   \frac{\nu \left(V_\theta \right)}{\nu \big({B^P}(\theta e^{ -  \lambda_{\max} t})\big)}    \underset{\equ{Bowen inc}}\le   \frac{\nu \left(V_\theta \right)}{\nu \left(V_{\theta e^{ -  \lambda_{\max} t}}\right)}      \underset{\equ{lb1}}\le  \frac{c_2}{c_1} \cdot   \frac{ \Leb \left(  \frac{\theta}{4 \sqrt{p}} {I_{\mathfrak p}}  \right)}{ \Leb \left(\frac{\theta e^{ -  \lambda_{\max} t}}{4 \sqrt{p}} {I_{\mathfrak p}} \right)}   =  \frac{c_2}{c_1} e^{p \lambda_{\max} t}.    $$
Hence
{the conclusion of the lemma follows with} $C_4:= \frac{c_2}{c_1} \cdot C_5$. 
\end{proof}
}}
 Let $0 < c < 1$.  Take $t = t_c$ as in Lemma \ref{first1}, and let $Q = Q_{c,t_c}$ 
{be} as in  \equ{qt}. Also 
let $O$ be an open subset of $X$. \medskip

\noindent\textbf{Proof of (1):}
{Take $2 \le k \in \N$ {and $x\in X$}.  Our goal is to find an upper bound for the Hausdorff dimension of the set {$S(k,t,x) $ defined in  \equ{S1}. In view of \equ{S1} and
the countable stability of Hausdorff dimension  it suffices to 
estimate the  dimension of 
$$\left \{h \in \overline{V_{{r_*}/{2}}}: {{g_{Nkt}}}hx \notin  Q \,\,\, \forall N \in \N       \right \} 
,$$
which, due to \equ{qt}, coincides with $\bigcap_{N \in \N} {A_x^N \big(kt,\frac{r_*}{2},X_{>  C^3  \ell_{c,t}^2}\big)}$.}

From Corollary \ref{fin cor} applied with $\theta= \frac{r_*}{2}$, combined with Lemma \ref{coveringballs} applied with $t$ replaced {by} $Nkt$ and ${r}=\frac{r_*}{2}$, we get {that} for any $N \in \N$ the set {$A_x^N{\big(kt,\frac{r_*}{2},X_{>  C^3  \ell_{c,t}^2}\big)}$} can be covered with at most
{$$      {{\frac{ e^{p {\lambda_{\max}}Nkt} }{{\nu({V_{ {r_*}/{2}}}) } }} }   \left({\frac{4c}{1-c}}\right)^N  \frac{\max\big(u(x),d\big)}{\ell_{c,t}^2}$$}
balls of radius $\frac{r_*}{2} e^{ - {\lambda_{\max}} Nkt}$ in $P$. Hence,

{ 
$${\begin{aligned}
    & \dim   \bigcap_{N \in \N} {A_x^N\left(kt,\frac{r_*}{2},X_{>  C^3  \ell_{c,t}^2}\right)}  \\
 & \le \lim_{N \rightarrow \infty}  \frac{ \log \left(  {{\frac{ e^{p {\lambda_{\max}}Nkt} }{{\nu({V_{ {r_*}/{2}}}) } }} } ({\frac{4c}{1-c}})^N 
 \frac{\max\big(u(x),d\big)}{\ell_{c,t}^2} \right)}{- \log  {\frac{r_*}{2}}{e}^{-{\lambda_{\max}}Nk{t}}} \\
 & = \frac{\log {\frac{4c}{1-c}} e^{p{\lambda_{\max}} k t}}{{\lambda_{\max}}kt} 
 =  {p- { \frac{1}{{\lambda_{\max}} k t}  \log {\frac{1-c}{4c}}}. }
\end{aligned}}$$}
}
\medskip

\noindent \textbf{Proof of (2):} { Let $2 \le k \in \N$, and $x \in X$.} Our goal is to find an upper bound for the Hausdorff dimension of the set 
$${\left\{h\in P \ssm S(k,t,x) : hx\in \widetilde {E}({F^+},O) \right\}}
$$
{Recall that
$${S(k,t,x)^c =  \left\{h \in P:{{g_{Nkt}}}hx \in  Q \text{ for some } N \in \N    \right\}}.
$$
Therefore}
{
$${\begin{aligned}
 \left\{h\in P \ssm S(k,t,x) : hx\in \widetilde{E}({F^+},O) \right\}   =& \left \{h \in P: hx \in  \widetilde{E}({F^+},O) \bigcap \left(\bigcup_{N \in \N}  g_{-Nkt}    Q \right)\right \}   \\ \subset & \left \{ h\in P: hx \in \bigcup_{N \in \N}  g_{-Nkt}   \big(Q \cap \widetilde{E}({F^+},O)  \big) \right \} \\
= & \bigcup_{N \in \N}  \left \{ h\in P: hx \in   g_{-Nkt}   \big(Q \cap \widetilde{E}({F^+},O)  \big) \right \}.
\end{aligned}}$$}
Hence, since Hausdorff dimension is {countably} stable, to complete the proof of this part, it suffices to show that for any $N \in \N$ we have
\eq{linequality}{\dim \left \{ h\in P: hx \in   g_{-Nkt}   \big(Q \cap \widetilde{E}({F^+},O)  \big) \right \} \le p- { \frac{ \mu \big( {{\sigma_{4 \theta}}} O  \big) -\frac{C_2}{r^{p}} e^{-\lambda kt} - { {{\frac{8C_1}{\theta^{p}}} {{\frac{\sqrt c}{1-c}}}}} }{{\lambda_{\max}} kt}}  }
where $C_1, C_2, \lambda$ as in Lemma \ref{first1}.

\smallskip

Now let $N \in \N$
and suppose $r>0$ {is} such that \equ{ineq beta2} is satisfied, where $a',b'$ are as in Lemma \ref{first1}. Note that, since $P$ is normalized by $F^+,$ we have $P=g_{-Nkt}Pg_{Nkt}$. Moreover, {$V_{r}$} is a tessellation domain. Hence, by countable stability of Hausdorff dimension, in order to prove \equ{linequality},  it suffices to show that the Hausdorff dimension of the set 
$$E'_{N,x,r}:=  \left\{h \in g_{-Nkt} \, {\overline{V_{r}}} \, g_{Nkt} : hx \in g_{-Nkt}  \big(Q \cap \widetilde{E}( {F^+},O)  \big) \right\}$$
is not greater than the right-hand {side} of  \equ{linequality}.
{For any $h \in E'_{N,x,r}$ we have
$$
\begin{aligned}
g_{ikt} g_{Nkt}hx 
& = g_{ikt} (g_{Nkt}h g_{-Nkt})g_{Nkt}x \in O^c                  \quad\forall\,i\in\N,\\
\end{aligned}
$$
and at the same time $g_{Nkt}h g_{-Nkt} \in {\overline{V_r}}.$
Hence,
 \eq{fininc}{E'_{N,x,r} \subset g_{-Nkt} \left( \bigcap_{i \in \N} {A_{g_{Nkt}x}^i \left( kt,r,O^c \right)} \right)g_{Nkt} .}}
Also, it is easy to see that if $E'_{N,x,r}$ is non-empty, then $$g_{Nkt}x \in {\overline{V_{r}}}  \left(Q \cap O^c \right) \subset \partial_{r} \left(Q \cap O^c \right).$$ So by applying  Lemma  \ref{coveringballs} {with $r$ replaced by $\theta$ and $t$ replaced with $ikt$, and Lemma \ref{first1} with $t$ replaced {by} $kt$}, we get that for any $i \in \N$ and any $\theta\in {\left[r,\frac {r_*}{2}\right]}$, the set  {$A_{g_{Nkt}x}^i \left( kt,r,O^c \right) $} can be covered with at most
$$
{
\begin{aligned}
& {\frac{
C_1 }{ \theta^{2p}}   e^{p {\lambda_{\max}} ikt}   \left( 1 - \mu \big( {\sigma_{4 \theta}} O  \big) +\frac{C_2}{r^{p}} e^{-\lambda ikt}  +  {{\frac{8C_1}{\theta^{p}}} {{\frac{\sqrt c}{1-c}}}} \right)^i}
\end{aligned}}  
$$
balls of radius $\theta e^{ -  {\lambda_{\max}} ik t}$ in $P$.
Also, note that {the \hd\ is preserved by conjugation.}
So, {we have for any $\theta\in {\left[r ,\frac {r_*}{2}\right]}$:
$${\begin{aligned} 
 \dim E'_{N,x,r}  & \underset{\equ{fininc}}\le  \dim \left(g_{-Nkt} \left( \bigcap_{i \in \N} {A_{g_{Nkt}x}^i \left( kt,r,O^c \right)} \right)g_{Nkt}  \right)\\
& 
{=}  \dim \bigcap_{i \in \N} {A_{g_{Nkt}x}^i \left( kt,{r},O^c \right)}  \\
& \le \lim_{i \rightarrow \infty} \frac {\log \left({\frac{
C_1 }{ \theta^{2p}} \cdot e^{p {\lambda_{\max}} ikt} \cdot \left( 1 - \mu \big( {\sigma_{4 \theta}} O  \big) +\frac{C_2}{r^{p}} e^{-\lambda ikt}  +  {{\frac{8C_1}{\theta^{p}}} {{\frac{\sqrt c}{1-c}}}} \right)^i}  \right)}{{- \log {\theta}{e}^{-{\lambda_{\max}} ikt}}} \\
& = p - \frac{-\log \left(   1 - \mu \big( {\sigma_{4 \theta}} O  \big) +\frac{C_2}{r^{p}} e^{-\lambda kt} +  {{\frac{8C_1}{\theta^{p}}} {{\frac{\sqrt c}{1-c}}}}\right)}{ {\lambda_{\max}}kt}\\
& \le {p - \frac{ \mu \big( {\sigma_{4 \theta}} O  \big) -\frac{C_2}{r^{p}} e^{-\lambda kt} - { {{\frac{8C_1}{\theta^{p}}} {{\frac{\sqrt c}{1-c}}}}} }{{\lambda_{\max}} kt}}. 
\end{aligned}}$$}
  This finishes the proof.
\end{proof}

\section{Concluding Remarks}\label{remarks}

\subsection{Effective estimates} {It is a natural problem to effectivize the estimates showing up in the Dimension Drop Conjecture. 
Previous work of the authors on the subject \cite{KMi1, KMi2} contained explicit estimates, although with no claims of optimality. Namely, this has been done  under the assumption that the complement of $O$ is compact (in particular, when $X$ is compact), and also in the special  case \equ{gt}.

 In the more general set-up of this paper it is also possible to make the estimates effective.
  This however would require an additional ingredient: {finding a lower bound for the injectivity radii of compact sets $\{ x: u_t(x) \le M   \}$ arising from condition (ENDP). Such lower bounds can be obtained immediately whenever the following condition is satisfied: Let $P$ be a subgroup of $G$ that has property (ENDP), and let $\{u_t\}_{t \ge t_0}$ be the family of height functions as in Definition \ref{enp2}; then  there exist positive constants $\m_0, m$ such that
\eq{bic} {{r_0 (x)}^{-1} \ge m_0 {u_t(x)}^{-m} \,\,\, \text{ for every   } x \in X,\, t \ge t_0.             }}
This condition can be verified in many special cases. {For example, in \cite{SS, BQ} certain 
height functions are constructed on homogeneous spaces of semisimple Lie groups without compact factors, and for these height functions \equ{bic} is verified in \cite[Proposition 26]{SS} and \cite[Lemma 6.3]{BQ}  respectively. By using the same method one can easily show that \equ{bic} holds} for height functions $u_t$ as in Theorem \ref {thmguanshi}, and also for the family of height functions constructed in \cite{KMi2} in the case  \equ{gt}. A variation of our argument shows that in the presence of \equ{bic} one has
$${\inf_{x \in X} \codim \left(\big\{h\in P: hx\in \widetilde E({F},O)\big\} \right) \gg \frac{\mu(O)}{\log \frac{1}{{\min \left(\theta_O, \mu(O), r_1     \right)}}},}$$
where 
$\theta_O$ is as in \equ{su1}, and $0<r_1<\frac{1}{2}$ is a uniform constant independent of $O$.}

\subsection{Removing the $\Ad$-diagonalizability condition} We expect that
by a slight modification of the proof of Theorem \ref{dimension drop 3}  one can show that this theorem holds when $F$ is an arbitrary one-parameter unbounded subsemigroup of a connected semisimple Lie group $G$; namely, the condition that $F$ is $\Ad$-diagonalizable is not necessary. Indeed, recall the Jordan decomposition of $F = \{g_t\}$: one can write $g_t=k_ta_tu_t$, where
 $K_{F} = \{k_t\}$ is bounded,  $A_{F} = \{a_t\}$  is $\Ad$-diagonalizable, and $U_{F} = \{u_t
\}$  is $\Ad$-unipotent.   These subgroups are uniquely determined  and commute with each other. 
If $A_{F}$ is trivial (in other words, if $F$ is $\Ad$-quasiunipotent) and $U_F$ is not, then   Ratner's Measure Classification Theorem and the work of Dani and Margulis  (see \cite[Lemma 21.2]{St} and  \cite[Proposition 2.1]{DM}}) imply that whenever $O$ is non-empty, the set $\widetilde E({F},O)$  is contained in a countable union of proper submanifolds of $X$; hence dimension drop takes place in a stronger form. 
On the other hand, if $A_{F}$ is non-trivial, one can modify our argument following the lines of \cite[\S4]{GS}, where an analog of   (ENDP) was 
considered with $(I_{f,t}\psi)(x)$ as in 
\equ{ift} replaced by a family of operators 
$$\psi(\cdot) \mapsto \int_P
f({{h}})\psi(a_tgu_tg^{-1}{{h}}\,\cdot)\,d{{\nu}}({{h}}),$$
and with $g$ running through the centralizer of $A_F$ in $G$.

\ignore{{\subsection{Avoiding open sets on average}Let $X = \ggm$ be an arbitrary \hs , $\mu$ be a $G$-invariant probability measure on $X$, and let $F=\{g_t: t \ge 0\}$ be a one parameter subsemigroup of $G$ which acts ergodically on $(X,\mu)$. Given $0< \delta \le 1$, let us say that a point $x \in X$ \textsl{$\delta$-escapes on average} if 
$$\lim_{N \to \infty} \frac{1}{N}\big|\{\ell \in \{1, \dots, N\}: g_\ell x \notin Q    \}\big| \ge \delta      $$ 
for any compact subset $Q$ of $X$.
\comm{Cite \cite{KKLM}, \cite{AGMS} and [DFSU].}
 It follows directly from \cite[Theorem 1.3]{RHW} that
$$\dim \{x \in X: x \  \delta\text{-escapes on average}                   \} < \dim X                                    $$
whenever {$G$ is semisimple and} $F$ is contained in a a simple subgroup of $G$.

Motivated by the above dimension drop result and similar works in other settings such as \cite{MRC}, given an open subset $O$ of $X$ and $0 < \delta \le 1$, let us say that a point $x \in X$ \textsl{$\delta$-escapes $O$ on average with respect to
$F^+$} if $x$ belongs to
$$E_{\delta}(F^+,O):=\{x \in X: \lim \sup_{T \rightarrow \infty} \frac{1}{T} \int_0^T 1_{O^c}(g_tx)dt \ge \delta\}, $$
that is the set of points in $X$ whose orbit spends $\delta$ proportion of time in $O^c$.} Note that for any $0< \delta \le 1$ we have
\eq{lequ}{ E(F^+,O) \subset E_\delta(F^+,O). }
Also, observe that in view of Birkhoff's Ergodic Theorem, the set $E_\delta(F^+,O)$ has full measure whenever $0<\delta \le \mu(O^c)$, and has zero measure for any $\mu(O^c)< \delta \le 1$. This motivates estimating the Hausdorff dimension of $E_\delta(F^+, O)$ when $\mu(O^c)< \delta \le 1$. In a forthcoming work, by obtaining an explicit upper bound for $\dim E_\delta(F^+,O)$, we prove that in the case $\Gamma$ is a uniform lattice and $F$ acts exponentially mixing on $X$, for any non-empty open subset $O$ of $X$ there exists $\mu (O^c) \le \delta_O \le 1$ such that for any $\delta_O < \delta \le 1$ we have $\dim E_\delta (F^+,O)< \dim X$. We expect that a similar dimension drop result holds when $\Gamma$ is not necessarily uniform, this is still work in progress.}

{\subsection{Jointly Dirichlet-improvable systems of linear forms: a dimension bound} 
{Fix $m,n\in\N$ and, given $c\le 1$, say that $Y\in \mr$ is \textsl{$c$-Dirichlet improvable} 
if 
for all sufficiently large $N$ 
\eq{di}{\begin{aligned}\text{ there
exists $\p\in\Z^m$ and $\vq\in\Z^n\nz$  such that }\\\|Y\vq - \p\| < cN^{-n/m} 
\text{ and }0  <  \|\vq\|  < N.\qquad \end{aligned}}
(In this subsection 
$\|\cdot\|$ stands for the supremum norm on $\R^m$, $\R^n$ and $\R^{m+n}$.)
We let 
$\mathbf{DI}_{m,n}(c)$ be the set of  $c$-Dirichlet improvable $Y\in\mr$.
Dirichlet's theorem (see e.g.\ \cite{S2}) implies that $\mathbf{DI}_{m,n}(1) = \mr$.
Davenport  and
Schmidt \cite{DS}  proved that the Lebesgue measure of $\mathbf{DI}_{m,n}(c)$  is zero for any $c < 1$, and also that $\bigcup_{c<1}\mathbf{DI}_{m,n}(c)$  contains the set 
of badly approximable $m\times n$ matrices, which is known \cite{S1} to have full \hd; in other words, $ \dim \mathbf{DI}(c) \to mn$ as $c\to 1$.}}

Recently in \cite{KMi2} a solution of DDC for the case \equ{gt}, that is for the space $X$ of unimodular lattices in $\R^{m+n}$, was used to derive a dimension drop result for    the family $\{\mathbf{DI}_{m,n}(c)\}$: namely, that $ \dim \mathbf{DI}_{m,n}(c) < mn            $ whenever  $c<1$. Moreover, as explained in \cite[Remark 6]{KSY}, a combination of the methods from \cite{KMi2} with measure estimates obtained in \cite{KSY} can produce an effective estimate for the codimension of $\mathbf{DI}_{m,n}(c)$.
The reduction to dynamics goes back to Davenport, Schmidt and Dani \cite{dani}. It proceeds by assigning an element $h_Y :=\begin{bmatrix}
I_m & Y \\
0 & I_n\end{bmatrix}$ of $G = \SL_{m+n}(\R)$ to $Y$. Arguing as in \cite[Proposition 2.1]{KW1} or \cite[Proof of Theorem 1.5]{KMi2},  one can see that $Y\in \mathbf{DI}_{m,n}(c)$ if and only if $h_Y\Z^{m+n}\in \widetilde E(F,O)$, where  \eq{defo}{ O = \left\{\Lambda\in X: \|\vv\| \ge c^{\frac m{m+n}}\text{ for all }\vv\in\Lambda\nz\right\}}
(a subset of $X$ with non-empty interior), and $X$, $F$ are as in \equ{gt}. 


Our new Diophantine appplication  is motivated by \cite[\S 2.7]{BV}, where Beresnevich and Velani introduced the notion of \textsl{jointly singular} $k$-tuples of matrices. 
Namely, say that $(Y_1,\dots,Y_k)\in M_{n,m}^k$  is {\sl  $c$-Dirichlet improvable} if 
for all sufficiently large $N$ 
\eq{jdi}{\begin{aligned}&\text{ there
exist 
 $\p \in\Z^m,\, \vq\in\Z^n\nz$  and $i \in\{1, \dots,k\}$  such that }\\
& \qquad \qquad \qquad |Y_i \vq - \p\| < cN^{-n/m} 
\text{ and }0  <  \|\vq\|  < N. \end{aligned}}
Denote the set of   $c$-Dirichlet improvable $(Y_1,\dots,Y_k)$ by   $\mathbf{DI}_{m,n}^{(k)}(c)$. 
Applying  Dirichlet's theorem for each $k$, it is easy to see that $\mathbf{DI}_{m,n}^{(k)}(1) = M_{n,m}^k$. 
When $c<1$ one wants   for each large $N$ to  improve the conclusion of Dirichlet's theorem for at least one of the matrices, and for different $N$ it does not have to be the same matrix. It is clearly if one of the matrices is itself $c$-Dirichlet improvable, then so is the whole $k$-tuple; however  in general    $\mathbf{DI}_{m,n}^{(k)}(c)$ could be much larger than  the set  $$\big\{(Y_1,\dots,Y_k)\in M_{n,m}^k: Y_i \in \mathbf{DI}_{m,n} (c)\text{ for some }i=1,\dots,k\big\}.$$ This raises a problem of showing  some sort of dimension drop,
which is achieved by reducing the problem to a flow on 
the product of $k$ copies of $X$ as in \equ{gt}.
Indeed, it is not hard to see that the validity of \equ{jdi} for all sufficiently large $N$ is equivalent to the statement that for all sufficiently large $t$  
\eq{product}{
\exists\,\vv 
\in\Z^{m+n}\nz\text{ and }i \in \{1, \dots, k\}
\text{ with }\|g_th_{Y_i}\vv\| < c^{\frac m{m+n}}
.}
\ignore{$$
\mathcal{R}_c := \left\{\begin{pmatrix}a \\ \mathbf{b}\end{pmatrix}\in\R^{1+n} :|a| < c,\ \|\mathbf{b}\| \le 1\right\},
$$
where for any $s= (s_1,\cdots, s_n) \in \R^n$
$$h_s:=   \begin{bmatrix}
   1 & s_1 & \hdots & s_n \\
   0 & 1  \\
   \vdots &   & \ddots \\
   0      &   &    & 1
 \end{bmatrix}.          $$}
In its turn, 
\equ{product} is equivalent to 
$$(g_th_{Y_1}\Z^{m+n},\dots,g_th_{Y_k}\Z^{m+n})\notin O\times\cdots\times O,$$ where $O$ is as in \equ{defo}.
We conclude that $(Y_1,\dots,Y_k)\in\mathbf{DI}_{m,n}^{(k)}(c)$ if and only if 
$(h_{Y_1}\Z^{m+n},\dots,h_{Y_k}\Z^{m+n}) \in \widetilde E(F^{(k)},O\times\cdots\times O)$, where 
$$F^{(k)} := \{(g_t,\dots,g_t): t\ge 0\}   \subset \prod_{i=1}^k G$$
is acting on 
$X^{(k)} :=   \prod_{i=1}^k X$.
\ignore{the following: there exists $\vv = \begin{pmatrix}-p \\ \vq\end{pmatrix}\in\Z^{1+n}\nz$ such that the vector $(g_th_{\xi_1} \vv, \cdots ,g_th_{\xi_m}\vv)$ in $\prod_{
i=1}^m X$  belongs to the complement of the set  $\prod_{
i=1}^n U_c$, where
$$U_c := \big\{x\in X : x\cap \mathcal{R}_c = \{0\}\big\}.$$
It is easy to see that $U_c$ has non-empty interior; hence, $\prod_{
i=1}^n U_c$ has non-empty interior as well. Thus, for any matrix $\mathbf{\Xi}$ in $M_{n,m}$, $\mathbf{\Xi} \in \mathbf{JDI}(c)$  is equivalent to $(h_{\xi_1}, \cdots, h_{\xi_m}) \in E(D^+,\prod_{
i=1}^m U_c )$, where $\xi_1, \cdots, \xi_m$ are columns of $\mathbf{\Xi}$ and $D$ is the one-parameter subgroup $\{(g_t,\cdots,g_t): t \in \R\}$ of $\prod_{
i=1}^m \SL_{n+1}(\R)$. }

Since $F^{(k)}$ is a diagonalizable subsemigroup of $\prod_{
i=1}^k G$ whose expanding horospherical subgroup is precisely $$H^{(k)}:= \prod_{i=1}^k \{h_{Y}: Y \in \mr\},$$ 
it follows from  Theorem \ref{thmguanshi} that  $H^{(k)}$  has property {(ENDP)} with respect to \linebreak $(X^{(k)}, F^{(k)})$.
Moreover, since the action of $F$ on $X$ is exponentially mixing, by using Fubini's Theorem  it is straightforward to check that the action of $F^{(k)}$ on $X^{(k)}$ is exponentially mixing as well; hence, by Theorem \ref{thmheep}, $H^{(k)}$ has property {(ENDP)} with respect to $(X^{(k)}, F^{(k)})$. Therefore, we can apply Theorem \ref{dimension drop 3} with $P=H^{(k)}$ and arrive at:

\begin{thm} \label{jdi}
The \hd\ of $\mathbf{DI}_{m,n}^{(k)}(c)$ is strictly less than $kmn$ for any $c<1$ and $k\in \N$.
\end{thm}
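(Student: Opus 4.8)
The plan is to realize the set $\mathbf{DI}_{m,n}^{(k)}(c)$ as a slice of a set of the form $\widetilde E(F^{(k)},O^{(k)})$ for a suitable diagonalizable flow on a product of copies of the space of lattices, and then invoke Theorem \ref{dimension drop 3}. First I would recall the classical correspondence, going back to Dani, Davenport and Schmidt: for $Y\in\mr$ assign the element $h_Y=\begin{bmatrix}I_m & Y\\ 0 & I_n\end{bmatrix}\in G=\SL_{m+n}(\R)$, and check that the validity of \equ{jdi} for all sufficiently large $N$ is equivalent to the statement that for all sufficiently large $t$,
\eq{productx}{\exists\,\vv\in\Z^{m+n}\nz\ \text{and}\ i\in\{1,\dots,k\}\ \text{with}\ \|g_th_{Y_i}\vv\| < c^{m/(m+n)},}
where $g_t$ is as in \equ{gt}. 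This is a routine translation: the condition $\|Y_i\vq-\p\|<cN^{-n/m}$, $0<\|\vq\|<N$ is exactly the condition that the lattice vector $h_{Y_i}\begin{pmatrix}-\p\\ \vq\end{pmatrix}$ has small image under the diagonal matrix that expands the first $m$ coordinates by $N^{n/m}$ and contracts the last $n$ by $N^{-1}$; setting $N=e^{mt}$ converts this to \equ{productx}.

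Next I would package this: \equ{productx} fails for some large $t$ precisely when $(g_th_{Y_1}\Z^{m+n},\dots,g_th_{Y_k}\Z^{m+n})\notin O\times\cdots\times O$ for all large $t$, where $O$ is the open set in \equ{defo}; equivalently $(h_{Y_1}\Z^{m+n},\dots,h_{Y_k}\Z^{m+n})\in\widetilde E(F^{(k)},O^{(k)})$, with $X^{(k)}=\prod_{i=1}^k X$, $O^{(k)}=\prod_{i=1}^k O$, and $F^{(k)}=\{(g_t,\dots,g_t):t\ge0\}\subset\prod_{i=1}^k G$. Here $X=\SL_{m+n}(\R)/\SL_{m+n}(\Z)$. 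Since $O$ has non-empty interior, so does $O^{(k)}$, so it is a legitimate non-empty open set for Theorem \ref{dimension drop 3}. The expanding horospherical subgroup of $F^{(k)}$ is $H^{(k)}=\prod_{i=1}^k\{h_Y:Y\in\mr\}$, which is normalized by $F^{(k)}$ and has dimension $kmn$.

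To apply Theorem \ref{dimension drop 3} with $P=H^{(k)}$ I must verify properties (EEP) and (ENDP) for $(X^{(k)},F^{(k)})$. For (ENDP): each factor $X=\SL_{m+n}(\R)/\SL_{m+n}(\Z)$ is a non-uniform irreducible quotient of a semisimple group without compact factors, and the projection of $F^{(k)}$ to each factor is the unbounded group $\{g_t\}$; so Theorem \ref{thmguanshi} applies directly to $X^{(k)}=\prod_i G_i/\Gamma_i$ with $F=F^{(k)}$, giving that $H^{(k)}$ has property (ENDP). For (EEP): the flow $(X,F)$ is exponentially mixing, and by Fubini's theorem the product flow $(X^{(k)},F^{(k)})$ is exponentially mixing as well (the matrix coefficient of a product function factors, and one bounds Sobolev norms of the product by products of Sobolev norms of the factors); hence Theorem \ref{thmheep} gives that the expanding horospherical subgroup $H^{(k)}$ has property (EEP). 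Then Theorem \ref{dimension drop 3} yields $\inf_{x}\codim(\{h\in H^{(k)}:hx\in\widetilde E(F^{(k)},O^{(k)})\})>0$.

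Finally I would deduce the dimension statement for $\mathbf{DI}_{m,n}^{(k)}(c)$ itself. The map $(Y_1,\dots,Y_k)\mapsto(h_{Y_1}\Z^{m+n},\dots,h_{Y_k}\Z^{m+n})$ parametrizes, bi-Lipschitzly onto its image, an $H^{(k)}$-orbit through the base point $x_0=(\Z^{m+n},\dots,\Z^{m+n})$ (it is the orbit map of the abelian group $H^{(k)}$ restricted to a neighborhood, and extends to all of $M_{n,m}^k$ since $H^{(k)}$ is a closed subgroup mapping properly). Under this identification $\mathbf{DI}_{m,n}^{(k)}(c)$ corresponds exactly to $\{h\in H^{(k)}:hx_0\in\widetilde E(F^{(k)},O^{(k)})\}$, whose codimension in $H^{(k)}$ is at least the positive constant from Theorem \ref{dimension drop 3}. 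Since $\dim H^{(k)}=kmn$, it follows that $\dim\mathbf{DI}_{m,n}^{(k)}(c)<kmn$. I expect the main obstacle to be purely bookkeeping rather than conceptual: namely carefully checking that the Diophantine condition \equ{jdi} translates into \equ{productx} for \emph{all} sufficiently large $t$ (matching the $\widetilde E$ rather than $E$ formulation, which involves the compact set $Q$ in \equ{set}), and verifying that exponential mixing really does pass to finite products with the Sobolev-norm bookkeeping needed for Definition \ref{subgroup}; both are standard but require a little care.
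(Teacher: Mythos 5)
Your argument is correct and follows essentially the same route as the paper: translate \equ{jdi} into the dynamical condition \equ{product} via the Dani correspondence, realize $\mathbf{DI}_{m,n}^{(k)}(c)$ as an $H^{(k)}$-slice of $\widetilde E(F^{(k)}, O\times\cdots\times O)$, verify (ENDP) from Theorem \ref{thmguanshi} and (EEP) from exponential mixing of the product flow via Theorem \ref{thmheep}, and invoke Theorem \ref{dimension drop 3} with $P=H^{(k)}$. You are in fact slightly more explicit than the paper in spelling out the final bi-Lipschitz identification between $M_{m,n}^k$ and the $H^{(k)}$-orbit, and you correctly state (EEP) where the paper's text has an evident typo reading (ENDP).
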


\ignore{by a multiparameter version following \cite[Lemma 4.3]{GS}
If the semigroup $F^+$ is \textsl{quasiunipotent}, that is, all eigenvalues of $\Ad\, g_1$ have absolute value $1$, then, whenever the action is ergodic and $U$ is non-empty, the set \equ{set} is contained in a countable union of proper submanifolds of $X$ -- this follows from Ratner's Measure Classification Theorem and the work of Dani and Margulis,   {see \cite[Lemma 21.2]{St} and  \cite[Proposition 2.1]{DM}}. On the other hand, if $F^+$ is not quasiunipotemt and $U = \{z\}$ for some $z\in X$, it is shown in \cite{K1} that the set \equ{set} has full \hd.
Also Theorem \ref{thmguanshi} holds whenever the Jordan decomposition of $F$ has non-trivial $\Ad$-diagonalizable component. Let us explain this further. There are uniquely determined one parameter subgroups $K_{F} = \{k_t
: t \in \R\}, A_{F} = \{a_t
: t \in \R\}$ and $U_{F} = \{u_t
: t \in \R\}$ with the
following properties:
\begin{itemize}
    \item 
    $g_t=k_ta_tu_t$
    \item
    $K_{F}$ is bounded, $A_{F}$ is $\Ad$-diagonalizable, and $U_{F}$ is $\Ad$-unipotent.
    \item
    All the elements of $K_{F}, A_{F},$ and $U_{F}$ commute with each other.
\end{itemize}
We call the above decomposition, the Jordan decomposition of $F$ (see also \cite[Lemma 1.3]{GS}). By \cite[Lemma 4.3]{GS}, Theorem \ref{thmguanshi} holds in a more general setting where $F$ is a one parameter subgroup of $G$ such that $K_F$ is trivial but $A_F$ is non-trivial. But in view of the Jordan decomposition of $F$ mentioned above and the regularity of our height function (see \equ{reg}), it is easy to see that Theorem \ref{thmguanshi} holds even if $K_F$ is non-trivial.
Therefore, by slightly modifying the proofs in this paper we can show that  Corollary \ref{dimension drop 2} holds for any one parameter subgroup $F$ such that $A_F$ is nontrivial.}
 


\ignore{\subsection{Precise estimates for the Hausdorff dimension}
In view of results for a wide variety of dynamical systems (see, e.g, \cite{Si} and \cite{FP}) we expect that the codimension of $E(F^+,U)$ should be approximately some constant times the measure of $U$, and conjecturally there should not be any logarithmic term on the right side of \equ{dbound}. However it is not clear how to improve our upper bound, as well as how to 
obtain a complimentary lower bound for $\dim E(F^+,U)$, using the exponential mixing of the action.

\subsection{Non-dense orbits and Hausdorff dimension in other settings}
Hausdorff dimension of non-dense orbits have been extensively studied in other types of dynamical systems as well. For example, Urbanski in \cite{U1} showed that if $M$ is a compact Riemannian manifold and $T : M \rightarrow M$ is a
transitive Anosov diffeomorphism then the Hausdorff dimension of the set of points with non-dense orbit under $T$ is full. He proved the same statement for Anosov
flows and expanding endomorphisms.  This work was improved in \cite{AN}, where a lower estimate for the set of points whose $T$-orbit stays away from a fixed open subset of $M$ was obtained. In \cite{Do}, Dolgopyat studied the Hausdorff dimension of orbits of Anosov flows and diffeomorphisms which do not accumulate on certain subsets of low entropy. Also, in the setting of Teichm\"uller dynamics, Hausdorff dimension of non-dense orbits has been studied in several papers (see, e.g \cite{M} and \cite{MRC}).
\subsection{Large deviations in homogeneous spaces}
Let $X = \ggm$ be an arbitrary \hs , $\mu$ be a $G$-invariant probability measure on $X$, and let $F^+=\{g_t\}_{t \ge 0}$ be a one parameter subsemigroup of $G$ which acts ergodically on $(X,\mu)$. Given an open subset $U$ of $X$ and $0 < \delta \le 1$, let us say that a point $x \in X$ \textsl{$\delta$-escapes $U$ on average with respect to
$F^+$} if $x$ belongs to
$$E_{\delta}(F^+,U):=\{x \in X: \lim \sup_{T \rightarrow \infty} \frac{1}{T} \int_0^T 1_{O^c}(g_tx)dt \ge \delta\}, $$
that is the set of points in $X$ whose orbit spends $\delta$ proportion of time in $O^c$. Note that for any $0< \delta \le 1$ we have
\eq{lequ}{ E(F^+,U) \subset E_\delta(F^+,U), }}
\ignore{which means that the sets $E_\delta(F^,U)$ are larger compared to $E(F^+,U)$; hence their dimension is greater than or equal to dimension of $E(F^+,U)$. Birkhoff's Ergodic theorem implies
$$\lim\limits_{T\rightarrow\infty}\frac{1}{T}\int_0^T 1_{O^c}(g_tx)dt = \mu (O^c).$$
Hence, the set $E_\delta(F^+,U)$ has full measure for any $0< \delta \le \mu (O^c)$, and has zero measure for any $\mu(O^c)<\delta \le 1 $. This motivates estimating the Hausdorff dimension of  $E_\delta(F^+,U)$ for $\mu(O^c)<\delta \le 1 $.\\
Let $F^+$ be $\Ad$-digonalizable, and let $H$ be subgroup of $G$ with the Effective Equidistribution Property (EEP) with respect to $F^+$. In forthcoming work, by obtaining an explicit upper bound for $\dim E_\delta(F^+,U)$, we prove that for any non-empty open subset $U$ of $X$ there exists $\mu (O^c) \le \delta_U \le 1$ such that for any $\delta_U < \delta \le 1$ we have $\dim E_\delta (F^+,U)< \dim X$. This, in view of \equ{lequ}, will strengthen the main result of \cite{KM}, when $\Gamma$ is a uniform lattice.
\subsection{Dimension drop conjecture for arbitrary homogeneous spaces and arbitrary flows}
As we saw in this paper, Eskin-Margulis function is a powerful tool for studying the orbits which spend a large proportion of time in the cusp neighborhoods. The construction of the function for arbitrary homogeneous spaces was given in \cite{EMM} as well. This can be used to control geodesic excursions into cusps in any homogeneous space. For example, Guan and Shi in \cite{GS}, used the generalized version of Eskin-Margulis function to extend the methods used in \cite{KKLM} to arbitrary homogeneous spaces, and show that the set of points with divergent on average trajectories has not full Hausdorff dimension. We believe that by taking a similar approach, and by combining the methods of this paper with \cite{EMM} and \cite{GS}, we can potentially solve the dimension drop conjecture for arbitrary homogeneous spaces and flows. This project is work in progress.}

\ignore{Recall that
$${\theta_O=\min \left(\frac{1}{4 \sqrt{L}},\sup \{ s > 0: \mu(\sigma_{3 \sqrt{L}{\theta}}U) \ge \frac{1}{2} \mu(O)  \}\right)}$$
\ignore{and
\eq{k}{k:=\left \lceil{\max \left( \frac{2p+ \log C_4}{m+n},\frac{2p(mn+1)}{\lambda},4bp \right)}\right \rceil .}}
and
$${r(U,a)}=\min \left( \mu(O),{\theta_O}^{8mnp},r_1 \right),$$
where
$$ r_1=\min \left( r_3,{a}^{-pt_1}, \frac{K_1}{8K_2}, {a}^{-4pK_1}\right).$$
Note that $\theta_O > 4 \sqrt{L}\cdot {r(U,a)}.$ Therefore, by \equ{S2 codim} applied with $\theta=\theta_O$ and $r={r(U,a)}$ and in view of  \equ{ineq beta1}, we conclude that for any $t>0$ and any integer $k \ge 2$ satisfying
\eq{ineq beta}{{ \max(C_4{a}^{-(m+n)kt},{a}^{- \frac{k}{b}(t-a)}) \le {r(U,a)} \le \min( C_{2}{a}^{-p  t}, r_3)},} we have
\eq{S2 codim1}{\codim S_2(U,k,t) \ge \frac{\log \left(1-  K_1\mu ({{{\sigma _{3 \sqrt{L}{\theta}}}{( U)}}})+\frac{K_2 {a}^{-\lambda kt}}{{r(U,a)}^{mn}}+ \sqrt{\frac{k-1}{{\theta_O}^{mn}}}C_{3}^{k} t^{k/2} e^{-\frac{t}{4}} \right)}{{\log a} \cdot (m+n)kt}.}}



\bibliographystyle{alpha}

\end{document}